\title{Realizations of multiassociahedra via bipartite rigidity}
\author{Luis Crespo Ruiz}
\newtheorem{theorem}{Theorem}[section]
\newtheorem{lemma}[theorem]{Lemma}
\newtheorem{proposition}[theorem]{Proposition}
\newtheorem{corollary}[theorem]{Corollary}
\newtheorem{conjecture}[theorem]{Conjecture}
\theoremstyle{definition}
\newtheorem{definition}[theorem]{Definition}
\newtheorem{remark}[theorem]{Remark}
\newcommand{\ocultar}[1]{}
\newcommand{\N}{\mathbb{N}}
\newcommand{\R}{\mathbb{R}}
\newcommand{\proj}{\mathbb{P}}
\newcommand{\K}{\mathbb{K}}
\newcommand{\C}{\mathbb{C}}
\newcommand{\bnn}{{\binom{[n]}2}}
\newcommand{\p}{\mathbf{p}}
\newcommand{\HH}{\mathcal{H}}
\newcommand{\PP}{\mathcal{P}}
\newcommand{\VV}{\mathcal{V}}
\newcommand{\bt}{\mathbf{t}}
\newcommand{\M}{\mathcal{M}}
\newcommand{\Gr}[2]{\mathcal{G}\hskip-1pt\mathit{r}_#1(#2)}
\newcommand{\Pf}[2]{\mathcal{P}\hskip-1pt\mathit{f}\hskip-1pt_{#1}(#2)}
\newcommand{\Ass}[2]{\mathcal{A}\hskip-0pt\mathit{ss}_{#1}(#2)}
\newcommand{\OvAss}[2]{\overline{\mathcal{A}\hskip-0pt\mathit{ss}}_{#1}(#2)}
\newcommand{\PV}[2]{\mathrm{Pf}_{#1}(#2)}
\newcommand{\PVplus}[2]{\mathrm{Pf}_{#1}^+(#2)}
\newcommand{\g}{\textbf{g}}
\newcommand{\Grob}{\operatorname{Grob}}
\newcommand{\link}{\operatorname{lk}}
\newcommand{\Mplus}[2]{M_{#1}^+(#2)}
\DeclareMathOperator{\trop}{trop}
\DeclareMathOperator{\ini}{in}
\DeclareMathOperator{\Supp}{Supp}
\DeclareMathOperator{\sign}{sign}
\begin{document}
	\begin{abstract}
		Let $\Ass{k}{n}$ denote the simplicial complex of $(k+1)$-crossing-free subsets of edges in $\bnn$. Here $k,n\in \N$ and $n\ge 2k+1$. It is conjectured that this simplicial complex is polytopal (Jonsson 2005). However, despite several recent advances, this is still an open problem.
		
		In this paper we attack this problem using as a vector configuration the rows of a rigidity matrix, namely, hyperconnectivity restricted to bipartite graphs. We see that in this way $\Ass{k}{n}$ can be realized as a polytope for $k=2$ and $n\le 10$, and as a fan for $k=2$ and $n\le 13$, and for $k=3$ and $n\le 11$. However, we also prove that the cases with $k\ge 3$ and $n\ge \max\{12,2k+4\}$ are not realizable in this way.
		
		We also give an algebraic interpretation of the rigidity matroid,  relating it to a projection of determinantal varieties with implications in matrix completion, and prove the presence of a fan isomorphic to $\Ass{k-1}{n-2}$ in the tropicalization of that variety.
	\end{abstract}
	
\maketitle
\tableofcontents
\section{Introduction}
Consider the simplicial complex $\Ass{1}{n}$ whose vertices are the diagonals of the $n$-gon, and whose faces are the non-crossing sets of diagonals. The facets of this simplicial complex are the \textit{triangulations} of the $n$-gon. Ignoring the sides of the polygon $\{i,i+1\}$, that are common to all triangulations, we obtain a polytopal sphere of dimension $n-4$, whose dual is commonly called the \textit{associahedron}.

Now we will consider the general case where we allow up to $k$ diagonals to pairwise cross but not $k+1$ (assuming $n>2k+1$). A subset of $\bnn$ is called $(k+1)$-free if it does not contain $k+1$ edges that pairwise cross, and a $k$-triangulation if it is a maximal $(k+1)$-free set. We can now define a simplicial complex $\Ass{k}{n}$ whose faces are $(k+1)$-free sets (and whose facets are $k$-triangulations).

Diagonals of length at most $k$ (where length is measured cyclically) cannot participate in any $(k+1)$-crossing. Hence, it makes sense to delete them from $\Ass{k}{n}$ to obtain the reduced complex $\OvAss{k}{n}$, which is called  \emph{multiassociahedron} or {$k$-associahedron}. 
See Section~\ref{sec:multi} for more precise definitions, and \cite{PilPoc,PilSan,Stump} for additional information.

It was proved in \cite{Naka,DKM} that every $k$-triangulation of the $n$-gon has exactly $k(2n-2k-1)$ diagonals. That is, $\Ass{k}{n}$ is pure of dimension $k(2n-2k-1)-1$.
Jonsson~\cite{Jonsson} further proved that the reduced version $\OvAss{k}{n}$ is a shellable sphere of dimension $k(n-2k-1)-1$, and  conjectured it to be the normal fan of a polytope.

\begin{conjecture}[\cite{Jonsson}]
	\label{conj:polytope}
	For every $n\ge 2k+1$ the complex $\OvAss{k}{n}$ is a polytopal sphere. That is, there is a simplicial polytope of dimension $k(n-2k-1)-1$ and with $\binom{n}2-kn$ vertices whose lattice of proper faces is isomorphic to $\OvAss{k}{n}$.
\end{conjecture}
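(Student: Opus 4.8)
The plan is to follow the two standard stages for proving a simplicial sphere polytopal: first realize $\OvAss{k}{n}$ as a \emph{complete simplicial fan}, and then certify that this fan is the normal fan of a polytope. The natural candidate for the first stage — and the one this paper pursues — is to build the required point configuration from a rigidity-type matrix. One sets up a matrix $R(\mathbf p)$ whose rows are indexed by $\bnn$ and depend on auxiliary parameters $\mathbf p$ (here, the bipartite hyperconnectivity matrix of the abstract), arranged so that: (i) the short diagonals (cyclic length $\le k$) are coloops of the row matroid, so that deleting them leaves a configuration $\{v_e\}$ of $\binom n2-kn$ vectors in $\R^{k(n-2k-1)}$; and (ii) the circuits of this reduced matroid are exactly the $(k+1)$-crossings, so that its independent sets are the interior faces of $\OvAss{k}{n}$ and its bases are the $k$-triangulations — consistent with the count $k(2n-2k-1)$ of diagonals in a $k$-triangulation and the dimension $k(n-2k-1)-1$ from \cite{Naka,DKM,Jonsson}.

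Step~1 is to prove (i)--(ii). The easy half is that every $(k+1)$-crossing is dependent, which localizes to a configuration on $2k+2$ points; the delicate half is that every $(k+1)$-free set is independent, which I would attack by induction on $n$, exploiting the combinatorics of flips and of the ``inflation'' operation on $k$-triangulations together with a genericity argument on $\mathbf p$. Step~2 upgrades this matroid to a fan. Since Jonsson proved that $\OvAss{k}{n}$ is a shellable sphere, it is in particular a pseudomanifold in which every ridge lies in exactly two facets, so by the standard sphere-to-fan criterion it suffices to verify, at each flip $F\cup\{f\}\leftrightarrow F\cup\{f'\}$, that $v_f$ and $v_{f'}$ lie on opposite sides of the wall hyperplane $\operatorname{lin}\{v_e:e\in F\}$. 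This is precisely where mere genericity of $\mathbf p$ is insufficient and a carefully engineered choice (prescribed sign patterns, a moment-type curve, or a recursive construction in $n$) becomes essential. Step~3 is the fan-to-polytope step: a complete simplicial fan is polytopal iff the system of strict linear inequalities defining a convex support function — one inequality per flip — is feasible, and I would try to solve this linear program directly, cutting it down via the dihedral symmetry of the $n$-gon or by exhibiting an explicit Gale-dual configuration of the desired simplicial polytope.

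The main obstacle is Steps~2 and~3 together: arranging the matroid correctly (Step~1) with a generic $\mathbf p$ is comparatively routine, but a generic configuration need not yield a fan, and a fan need not be polytopal. Indeed, the negative results announced in the abstract show that \emph{this} construction provably fails the opposite-sides condition once $k\ge 3$ and $n\ge\max\{12,2k+4\}$, so any proof of the conjecture in full generality must either replace bipartite hyperconnectivity by a subtler rigidity matrix, use a genuinely different (non-rigidity) construction, or proceed indirectly — for instance through the determinantal and tropical picture of the abstract, where the appearance of a fan isomorphic to $\Ass{k-1}{n-2}$ inside a tropical variety hints at an inductive attack on both parameters. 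I therefore do not expect this plan to settle the conjecture in general; it is, rather, exactly the framework that delivers the positive instances ($k=2$ with $n\le 10$ as a polytope, and the larger ranges as fans).
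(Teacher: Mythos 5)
The statement you were asked about is an open conjecture (Jonsson's polytopality conjecture), and the paper does not prove it --- it only obtains partial realizations and obstructions. Your proposal correctly recognizes this and outlines essentially the same strategy the paper actually follows for its partial results: take as rays the rows of a rigidity-type matrix (here bipartite hyperconnectivity along the moment curve), verify the basis-collection and flip-sign (ICoP) conditions to get a complete fan, and then solve the linear feasibility problem for polytopality; you also correctly identify that the paper's own negative results (Theorem \ref{thm:2k+6}) show this particular construction cannot settle the conjecture for $k\ge 3$ and large $n$. Since neither you nor the paper proves the conjecture, there is nothing further to compare.
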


Conjecture~\ref{conj:polytope} is easy to prove  for $n\le 2k+3$. $\OvAss{k}{2k+1}$ is indeed a $-1$-sphere (the complex whose only face is the empty set). $\OvAss{k}{2k+2}$ is the face poset of a $(k-1)$-simplex, and $\OvAss{k}{2k+3}$ is (the polar of) the cyclic polytope of dimension $2k-1$ with $n$ vertices (Lemma 8.7 in \cite{PilSan}). 
The additional cases for which Jonsson's conjecture is known to hold are $k=2$ and $n=8$ \cite{bokpil}, $k=2$ and $n=9,10$ \cite{CreSan:Multi} and $k=3$ and $n=10$ \cite{CreSan:Multi}. In some additional cases $\OvAss{k}{n}$ has been realized as a complete simplicial fan, but it is open whether this fan is polytopal. This includes the cases $n\le 2k+4$ \cite{bcl}, the cases $k=2$ and $n\le 13$ \cite{Manneville} and the cases $k=3$ and $n\le 11$ \cite{bcl}.

In \cite{CreSan:Multi}, $\Ass{k}{n}$ is realized as a fan for $k=2,3$ and various values of $n$ using as rays the row vectors of a rigidity matrix of $n$ points in dimension $2k$, which has exactly the required rank $k(2n-2k-1)$ for $\Ass{k}{n}$. 
There are several versions of rigidity that can be used, most notably classical (bar-and-joint) rigidity, Kalai's \emph{hyperconnectivity}~\cite{Kalai}, and Billera-Whiteley's \emph{cofactor rigidity}~\cite{Whiteley}. Those new cases of multiassociahedra $\OvAss{k}{n}$ are realized with cofactor rigidity taking points along the parabola (which is known to be equivalent to bar-and-joint rigidity with points along the moment curve): the complex can be realized as a polytope for $n\le 10$ and as a complete fan for $n\le 13$, except $\OvAss{3}{12}$ and $\OvAss{3}{13}$.
On the other hand it is shown that certain multiassociahedra, namely those with $k\ge 3$ and $n\ge 2k+6$, cannot be realized as fans with cofactor rigidity, no matter how we choose the points. 

Interest in  polytopality of $\OvAss{k}{n}$ comes also from cluster algebras and Coxeter combinatorics. 
Let $w\in W$ be an element in a Coxeter group $W$ and let $Q$ a word of a certain length $N$. Assume that $Q$ contains as a subword a reduced expression for $w$. The \emph{subword complex} of $Q$ and $w$ is the simplicial complex with vertex set $[N]$ and with faces the subsets of positions that can be deleted from $Q$ and still contain a reduced expression for $w$. Knutson and Miller~\cite[Theorem 3.7 and Question 6.4]{KnuMil} proved that every subword complex is either a shellable ball or sphere, and they asked whether all spherical subword complexes are polytopal. 
It was later proved by Stump~\cite[Theorem 2.1]{Stump} that  $\OvAss{k}{n}$ is a spherical subword complex for the Coxeter system of type $A_{n-2k-1}$ and, moreover, it is \emph{universal}: every other spherical subword complex of type $A$ appears as a link in some $\OvAss{k}{n}$~\cite[Proposition 5.6]{PilSan:brick}. In particular, Conjecture~\ref{conj:polytope} is equivalent to a positive answer in type A to the question of Knutson and Miller. (Versions of $k$-associahedra for the rest of finite Coxeter groups exist, with the same implications~\cite{CLS14}).

In the case $k=1$, one way of realizing the associahedron is as the positive part of the space of ``tree metrics'', which coincides with the tropicalization $\trop(\Gr2n)$ of the Grassmannian $\Gr2n$ (see \cite{SpeStu,SpeWil}). More precisely:

\begin{theorem}[\protect{\cite[Section 5]{SpeWil}}]
	The totally positive tropical Grassmannian $\trop^+(\Gr2n)$ is a simplicial fan isomorphic to (a cone over) the extended associahedron $\OvAss{1}{n}$.
\end{theorem}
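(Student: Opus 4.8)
The plan is to realize $\trop^+(\Gr2n)$ as the positive part of the space of phylogenetic trees on $[n]$ and then to identify that space, with its cone structure, with the face fan over the simplicial complex $\OvAss1n$. The homogeneous ideal of $\Gr2n$ in $\R^{\bnn}$ is generated by the three-term Plücker relations $p_{ij}p_{kl}-p_{ik}p_{jl}+p_{il}p_{jk}$ for $1\le i<j<k<l\le n$, so a point $\p\in\R^{\bnn}$ lies in $\trop(\Gr2n)$ if and only if, for every such quadruple, the minimum of $p_{ij}+p_{kl}$, $p_{ik}+p_{jl}$, $p_{il}+p_{jk}$ is attained at least twice. Up to an overall sign this is the four-point condition, so $\trop(\Gr2n)$ is the space of tree metrics on $[n]$ with nonnegative internal lengths and arbitrary pendant lengths --- the Speyer--Sturmfels description \cite{SpeStu}, which I would simply quote. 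Its fan structure $\Tree_n$ has one cone $\sigma_T$ for each tree $T$ with leaf set $[n]$ and no degree-two vertex, consisting of the realizable $\p$ with $p_{ij}=\sum_{e\in\text{path}(i,j)}w_e$; face relations are edge contractions, the $n$ pendant-edge directions span the lineality space, and modulo lineality the fan is simplicial with $(n-3)$-dimensional maximal cones indexed by the trivalent trees.

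Next I would isolate the positive part: $\p\in\trop^+(\Gr2n)$ exactly when, in addition, every Plücker relation holds with the sign pattern coming from total positivity, $p_{ik}p_{jl}=p_{ij}p_{kl}+p_{il}p_{jk}$, i.e. $p_{ik}+p_{jl}=\min(p_{ij}+p_{kl},\,p_{il}+p_{jk})$. Translated to $T$, this forces, for every $i<j<k<l$, the four-leaf restriction $T|_{\{i,j,k,l\}}$ to induce one of the two non-crossing splits $\{i,j\}|\{k,l\}$ or $\{i,l\}|\{j,k\}$, never the crossing split $\{i,k\}|\{j,l\}$; equivalently, no internal edge of $T$ separates a pair of crossing chords of the $n$-gon. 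Hence $\sigma_T\subseteq\trop^+(\Gr2n)$ precisely when $T$ is \emph{planar} for the cyclic order $1<2<\cdots<n$, i.e. embeds in a disk with the leaves $1,\dots,n$ in that order on the boundary. The forward implication is just bookkeeping of which monomial is minimized on a generic point of $\sigma_T$; for the converse one must reconstruct the planar embedding from the interval-split data, which I would carry out either combinatorially (split decomposition, circular split systems) or, following \cite{SpeWil}, from the explicit totally positive parametrization of $\Gr2n^{>0}$ in cluster coordinates, whose tropicalizations are exactly the cyclic trees.

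Finally I would match this with the associahedron. A trivalent tree is planar for $1<\cdots<n$ if and only if it is the dual tree of a triangulation of the convex $n$-gon (internal vertices $\leftrightarrow$ triangles, internal edges $\leftrightarrow$ diagonals, pendant edges $\leftrightarrow$ sides), and planar trees in general are the duals of crossing-free sets of diagonals. Sending $T$ to its set of internal edges then gives a poset isomorphism from the cones of $\trop^+(\Gr2n)$, ordered by inclusion, to the faces of $\Ass1n$: rays correspond to the $\binom n2-n$ diagonals, maximal cones to triangulations, and the $n$-dimensional lineality space to the $n$ sides, which are common to all triangulations. Quotienting by the lineality space identifies $\trop^+(\Gr2n)$ with the face fan over the reduced complex $\OvAss1n$ --- a complete simplicial fan whose $(n-3)$-dimensional cones are the nonnegative spans of the $n-3$ diagonal rays of a triangulation --- that is, with (a cone over) the extended associahedron. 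The main obstacle is the converse in the middle step: showing that the local, quadruple-by-quadruple sign conditions force $T$ to be globally planar and that no extra cone slips into $\trop^+(\Gr2n)$; once the indexing by triangulations is in place, simpliciality, completeness, and the dimension count $k(n-2k-1)-1=n-4$ for $k=1$ are automatic.
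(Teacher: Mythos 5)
This statement is quoted by the paper from Speyer--Williams \cite{SpeWil} and is not proved there, so there is no internal proof to compare against; your sketch follows essentially the route of the cited source (tree metrics via Speyer--Sturmfels, positivity forcing planar trees, dual trees of triangulations), and that route is correct. Two points deserve more care than your outline gives them. First, Definition~\ref{defi:tropical} imposes a condition on the \emph{whole} initial ideal $\ini_v(I)$, whereas you test only the three-term Pl\"ucker generators; to conclude that $\p\in\trop^+(\Gr2n)$ is equivalent to the quadruple-by-quadruple sign conditions you need that these relations form a totally positive tropical basis for $\Gr2n$, which is a theorem of \cite{SpeWil} and not automatic (for general ideals, checking positivity of initial forms of generators is insufficient). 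Second, as you yourself flag, the local-to-global step --- that compatibility of all four-leaf restrictions with the cyclic order forces a global planar embedding of the tree, and hence that no cone outside the triangulation-indexed ones survives in $\trop^+$ --- is the real content of the argument and would need to be carried out (by induction on leaves, by split-system circularity, or by tropicalizing the positive parametrization as in \cite{SpeWil}) rather than merely named. With those two points supplied, the identification of the resulting fan, modulo its $n$-dimensional lineality space, with the cone over $\OvAss{1}{n}$ is the routine bookkeeping you describe.
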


Let us briefly recall what the tropicalization of a variety, and its positive part, are. (See also \cite{BLS}). Let $I\subset\K[x_1,\dots,x_N]$ be a polynomial ideal and let $V=V(I)\subset \K^N$ be its corresponding variety. Each vector $v\in \R^N$, considered as giving weights to the variables, defines an initial ideal $\ini_v(I)$, consisting of the initial forms $\ini_v(f)$ of the polynomials in $f$. For the purposes of this paper we take the following definitions. (These are not the standard definitions, but are equivalent to them as shown for example in \cite[Propositions 2.1 and 2.2]{SpeWil}):

\begin{definition}
	\label{defi:tropical}
	The \emph{tropical variety} $\trop(V)$ of $V$ equals the set of $v\in \R^N$ for which $\ini_v(I)$ does not contain any monomial. If $\K=\C$, the \emph{totally positive part} of $\trop(V)$, denoted $\trop^+(V)$,  equals the set of $v\in \R^N$ for which $\ini_v(I)$ does not contain any polynomial with all coefficients real and positive.
\end{definition}

In another previous paper \cite{CreSan:Pfaffians} we prove that the relation between the associahedron and $\Gr2n$ extends to a relation between the multiassociahedron $\Ass{k}{n}$ and the tropical variety of Pfaffians of  degree $k+1$: the part of that variety contained in the Gr\"obner cone of $(k+1)$-crossings (that is strictly contained in the totally positive part) is a fan isomorphic to the $k$-associahedron in $n$ vertices. This opens another way for realizability of $\OvAss{k}{n}$ as a complete fan: finding a projection $\R^\bnn\to\R^{k(2n-2k-1)}$ that is injective in that fan, embedding it as a complete fan, so that the link of the ``irrelevant face'' is the fan that we want. We have achieved it for $k=1$, leading to the construction of the associahedron by $\g$-vectors (that was previously found in \cite{HPS}).

\subsection*{Summary of methods and results}
This paper tries to continue those ideas to realize the multiassociahedron, either with the rows of a rigidity matrix or as part of a tropical variety. However, in this case the graph is first converted into a bipartite graph by duplicating each vertex and distributing the edges at a vertex between the two copies of that vertex, depending on the edge going left or right. Also, to preserve symmetry, we will reverse the ordering of the right vertices (those connected to a vertex at their left).

In what follows, non-bipartite graphs will be denoted, as usual, as $G=([n],E)$, where $E\subset\bnn$, and bipartite graphs will be denoted as $G=([n_1]\cup[n_2]',E)$ where $[n_1]=\{1,\ldots,n_1\}$ are the \textit{left vertices}, $[n_2]'=\{1',\ldots,n_2'\}$ are the \textit{right vertices} and $E\subset[n_1]\times [n_2]$, so that $(i,j)$ denotes the edge between $i$ and $j'$.

\begin{definition}
	The \textit{bipartization} of a graph $G=([n],E)$ is the graph $G'=([n]\cup [n]',E')$ where $E'=\{(i,n+1-j):\{i,j\}\in E,i<j\}$.
\end{definition}

Then, our method to realize the multiassociahedron is to bipartize each $k$-triangulation and take as a vector configuration the rows of the hyperconnectivity matrix. The degree of the last $k+1$ vertices in each side is $k,k-1,k-2,\ldots,1,0$ and all those edges are irrelevant (or boundary) ones that will be in all $k$-triangulations, so we can ignore them in the bipartization:

\begin{definition}
	The \textit{reduced bipartization} of a $k$-triangulation is its bipartization restricted to $[n-k-1]\cup [n-k-1]'$.
\end{definition}

Figure~\ref{fig:bipartization}
shows a $2$-triangulation in $7$ vertices, its bipartization and its reduced bipartization. Note that this operation seems to introduce $3$-crossings, some of which remain when reducing. This is because, in the rightmost graph, the natural positions of the vertices $4$ and $4'$ are reversed: both come from the original vertex $4$. This hints at the fact that we should not consider things like $\{24',33',42'\}$ as $3$-crossings, as long as they are not so ($\{24,35,46\}$ in this case) in the original graph.

\begin{figure}[htb]
\tikzset{inner sep=0}
\begin{tikzpicture}
	\node[label=180:{1=7'}] (p1) at (-2,2) {$\bullet$};
	\node[label=210:{2=6'}] (p2) at (-1.7,1) {$\bullet$};
	\node[label=240:{3=5'}] (p3) at (-1,0.3) {$\bullet$};
	\node[label=270:{4=4'}] (p4) at (0,0) {$\bullet$};
	\node[label=300:{5=3'}] (p5) at (1,0.3) {$\bullet$};
	\node[label=330:{6=2'}] (p6) at (1.7,1) {$\bullet$};
	\node[label=0:{7=1'}] (p7) at (2,2) {$\bullet$};
	\draw (p1.center)--(p2.center)--(p3.center)--(p4.center)--(p5.center)--(p6.center)--(p7.center)--(p1.center);
	\draw (p1.center)--(p3.center)--(p5.center)--(p7.center)--(p2.center)--(p4.center)--(p6.center)--(p1.center);
	\draw (p1.center)--(p4.center)--(p7.center)--(p3.center)--(p6.center);
\end{tikzpicture}
\raisebox{1.4cm}{ $\rightarrow$ }
\begin{tikzpicture}
	\node[label=180:1] (p1) at (0,3) {$\bullet$};
	\node[label=180:2] (p2) at (0,2.5) {$\bullet$};
	\node[label=180:3] (p3) at (0,2) {$\bullet$};
	\node[label=180:4] (p4) at (0,1.5) {$\bullet$};
	\node[label=180:5] (p5) at (0,1) {$\bullet$};
	\node[label=180:6] (p6) at (0,0.5) {$\bullet$};
	\node[label=180:7] (p7) at (0,0) {$\bullet$};
	\node[label=0:1'] (p1') at (2,3) {$\bullet$};
	\node[label=0:2'] (p2') at (2,2.5) {$\bullet$};
	\node[label=0:3'] (p3') at (2,2) {$\bullet$};
	\node[label=0:4'] (p4') at (2,1.5) {$\bullet$};
	\node[label=0:5'] (p5') at (2,1) {$\bullet$};
	\node[label=0:6'] (p6') at (2,0.5) {$\bullet$};
	\node[label=0:7'] (p7') at (2,0) {$\bullet$};
	\draw (p1.center)--(p6'.center);
	\draw (p2.center)--(p5'.center);
	\draw (p3.center)--(p4'.center);
	\draw (p4.center)--(p3'.center);
	\draw (p5.center)--(p2'.center);
	\draw (p6.center)--(p1'.center);
	\draw (p1.center)--(p1'.center);
	\draw (p1.center)--(p5'.center);
	\draw (p3.center)--(p3'.center);
	\draw (p5.center)--(p1'.center);
	\draw (p2.center)--(p1'.center);
	\draw (p2.center)--(p4'.center);
	\draw (p4.center)--(p2'.center);
	\draw (p1.center)--(p2'.center);
	\draw (p1.center)--(p4'.center);
	\draw (p4.center)--(p1'.center);
	\draw (p3.center)--(p1'.center);
	\draw (p3.center)--(p2'.center);
\end{tikzpicture}
\raisebox{1.4cm}{ $\rightarrow$ }
\begin{tikzpicture}
	\node[label=180:1] (p1) at (0,3) {$\bullet$};
	\node[label=180:2] (p2) at (0,2) {$\bullet$};
	\node[label=180:3] (p3) at (0,1) {$\bullet$};
	\node[label=180:4] (p4) at (0,0) {$\bullet$};
	\node[label=0:1'] (p1') at (2,3) {$\bullet$};
	\node[label=0:2'] (p2') at (2,2) {$\bullet$};
	\node[label=0:3'] (p3') at (2,1) {$\bullet$};
	\node[label=0:4'] (p4') at (2,0) {$\bullet$};
	\draw (p3.center)--(p4'.center);
	\draw (p4.center)--(p3'.center);
	\draw (p1.center)--(p1'.center);
	\draw (p3.center)--(p3'.center);
	\draw (p2.center)--(p1'.center);
	\draw (p2.center)--(p4'.center);
	\draw (p4.center)--(p2'.center);
	\draw (p1.center)--(p2'.center);
	\draw (p1.center)--(p4'.center);
	\draw (p4.center)--(p1'.center);
	\draw (p3.center)--(p1'.center);
	\draw (p3.center)--(p2'.center);
\end{tikzpicture}
\caption{A $2$-triangulation (left), its bipartization (center) and the reduced version of the latter (right).}
\label{fig:bipartization}
\end{figure}
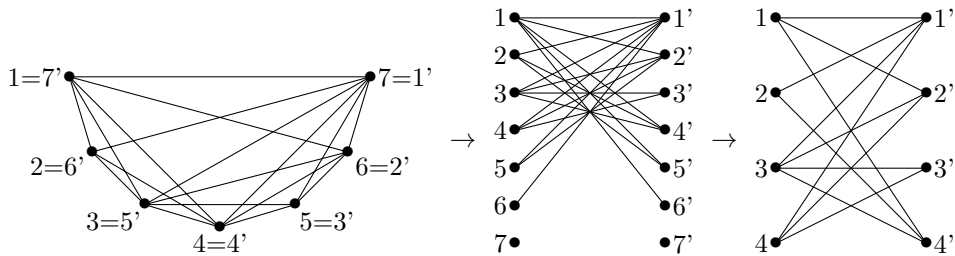

In this paper, we obtain results relating bipartizations of multitriangulations with an algebraic matroid, in the line of the already mentioned ones about Pfaffians, but using a projected determinantal variety instead of the Pfaffian variety. This allows us to prove Theorem \ref{thm:triangbases}.

The relation between bipartite graphs and the algebraic matroid of the determinantal variety is already pointed out in \cite{KTT} in relation to a different problem, that of \textit{matrix completion}. This problem asks the following question: given a matrix of which we only know some entries, and knowing that its rank is at most $k$, can the matrix be completed? If so, in how many ways (only one, finitely many, or a continuum of solutions)?

Concretely, this is what algebraic matroids tell us about matrix completion:
\begin{theorem}[{\cite[Theorem 2.14]{CreSan:Pfaffians}}]
	\label{thm:algebraic}
	Let $\K$ be an algebraically closed field, $I\subset \K[x_1,\dots,x_N]$ a prime ideal and $V$ its algebraic variety. For each $S\subset [N]$
	denote by $\pi_S : \K^{[N]}\to \K^S$ the coordinate projection to $S$. Then:
	\begin{enumerate}
		\item $S$ is independent in the algebraic matroid of $I$ if and only if $\pi_S(V)$ is  Zariski dense in $\K^S$.
		\item The rank of $S$ is equal to the dimension of $\pi_S(V)$.
		\item $S$ is spanning if and only if  $\pi_S$ is finite-to-one: for every $x\in \K^S$ the fiber $\pi_S^{-1}(x)$ is finite (perhaps empty).
	\end{enumerate}
\end{theorem}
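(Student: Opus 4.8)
The plan is to read all three items off the standard dictionary between coordinate projections of an affine variety and subfields of its function field. Write $A=\K[x_1,\dots,x_N]/I$; since $I$ is prime, $A$ is a domain, and I write $\bar x_i$ for the class of $x_i$ and $\K(V)=\operatorname{Frac}(A)$. Recall that the algebraic matroid of $I$ has ground set $[N]$ and rank function $S\mapsto\operatorname{trdeg}_\K\K(\bar x_i:i\in S)$, so that $S$ is independent exactly when the $\bar x_i$, $i\in S$, are algebraically independent over $\K$. Item (1) is then immediate: the vanishing ideal of $\overline{\pi_S(V)}$ is the contraction $I\cap\K[x_i:i\in S]$, hence $\pi_S(V)$ is Zariski dense in $\K^S$ iff this contraction is zero iff no polynomial relation over $\K$ holds among the $\bar x_i$, $i\in S$, i.e.\ iff $S$ is independent. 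Item (2) follows in the same spirit: $\overline{\pi_S(V)}$ is irreducible with function field $\K(\bar x_i:i\in S)\subseteq\K(V)$, so $\dim\pi_S(V)=\dim\overline{\pi_S(V)}=\operatorname{trdeg}_\K\K(\bar x_i:i\in S)=\operatorname{rank}(S)$, using Chevalley's theorem (constructibility of $\pi_S(V)$) to pass between $\pi_S(V)$ and its closure.

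For item (3) — where I read $\pi_S^{-1}(x)$ as the fibre $\pi_S^{-1}(x)\cap V$ — put $W:=\overline{\pi_S(V)}$ and $\varphi:=\pi_S|_V\colon V\to W$, a dominant morphism of irreducible varieties; by (2), $S$ is spanning iff $\operatorname{rank}(S)=\dim V$, i.e.\ iff $\dim W=\dim V$. The theorem on the dimension of fibres says every non-empty fibre of $\varphi$ has dimension at least $\dim V-\dim W$, with equality over a dense open $U\subseteq W$ that we may take inside $\varphi(V)$. Two implications are then routine. If $\pi_S$ is finite-to-one, some fibre is non-empty and finite, which forces $\dim W=\dim V$, so $S$ is spanning; equivalently, if $S$ is not spanning then $\dim W<\dim V$ and the fibres over $U$ are positive-dimensional, so $\pi_S$ is not finite-to-one. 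The substantive implication that remains is: if $S$ is spanning then \emph{every} fibre $\pi_S^{-1}(x)\cap V$, $x\in\K^S$, is finite — not only the generic one, which is finite because $\dim W=\dim V$.

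This last step is the one I expect to be the main obstacle. The natural attack is an induction on $\dim W$ via upper semicontinuity of fibre dimension: when $S$ is spanning the locus $Z\subseteq W$ over which $\varphi$ has fibres of dimension $\ge 1$ is a proper closed subset, the fibres over $W\setminus Z$ are finite, and one would like to restrict $\varphi$ over each component of $\varphi^{-1}(Z)$, recognize the restriction as the projection attached to the algebraic matroid of a prime ideal of a lower-dimensional variety, and verify that the spanning property is inherited so that the inductive hypothesis applies. The delicate point is that spanning is a \emph{generic} condition — $\K(V)$ is algebraic over $\K(\bar x_i:i\in S)$ — while the claim bounds a fibre \emph{after} specializing the coordinates in $S$: the monic-up-to-denominators relation making each $\bar x_j$ ($j\notin S$) algebraic over $\K(\bar x_i:i\in S)$ can degenerate precisely over the vanishing locus of its leading coefficient, so one cannot simply clear denominators and read off an integral dependence valid over all of $\K^S$. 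Getting the induction to close — ensuring the restricted projection over $Z$ is again of the required form and that no fibre there can acquire positive dimension — is where the real content of (3) lies; everything else is routine commutative algebra.
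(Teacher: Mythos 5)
Your treatment of items (1) and (2) is correct and standard, and the easy half of (3) (finite fibers force $\dim\overline{\pi_S(V)}=\dim V$) is fine. But the forward half of (3) is left unproved — you only sketch a semicontinuity/induction strategy and acknowledge it does not close — and this gap cannot be closed, because under the reading you (rightly) adopt, namely that \emph{every} fiber $\pi_S^{-1}(x)\cap V$, $x\in\K^S$, is finite, the implication ``spanning $\Rightarrow$ finite-to-one'' is false for general prime ideals. Take $I=(x_1x_3-x_2)\subset\K[x_1,x_2,x_3]$ and $S=\{1,2\}$: no nonzero polynomial in $x_1,x_2$ alone lies in $I$, so $S$ has rank $2=\dim V$ and is spanning (indeed a basis), yet $\pi_S^{-1}(0,0)\cap V=\{(0,0,t):t\in\K\}$ is infinite. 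The same phenomenon occurs for the determinantal varieties this paper cares about: for $I=(x_{11}x_{22}-x_{12}x_{21})$ the set $S=\{x_{11},x_{12},x_{22}\}$ is a maximal $2$-free set, hence spanning, but the fiber over $x_{11}=x_{12}=0$, $x_{22}=1$ consists of all matrices $\left(\begin{smallmatrix}0&0\\ \ast&1\end{smallmatrix}\right)$. So the locus of positive-dimensional fibers need not be empty when $S$ is spanning, and no induction on that locus can prove the universal statement; the obstruction you identified (integral dependence degenerating where leading coefficients vanish) is not a technical nuisance but exactly where the claim fails.

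What is true — and what your ``routine'' fiber-dimension argument already establishes — is the generic version: $S$ is spanning if and only if $\pi_S|_V$ is \emph{generically} finite, i.e.\ the fibers over a dense open subset of $\overline{\pi_S(V)}$ are finite; this is the form of the dictionary in the sources behind the quoted result (Rosen, Rosen--Sidman--Theran). Note also that the present paper gives no proof to compare against: Theorem~\ref{thm:algebraic} is imported verbatim from \cite{CreSan:Pfaffians}, and its ``for every $x\in\K^S$'' phrasing (as well as the corresponding ``for any choice of values'' in part (2) of Theorem~\ref{thm:matroid}) should be read generically for the reasons above. In short: your (1), (2) and the backward half of (3) are correct, but the direction you single out as the real content is not a missing argument — it is a false strengthening, and the fix is to weaken the statement to generic finiteness rather than to push the induction through.
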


See \cite{Rosen,RST} for more in algebraic matroids and matrix completion.

Reduced bipartizations of $k$-triangulations have $k(2n-2k-1)-k(k+1)=2kn-3k^2-2k$ edges, which is exactly the rank of the bipartite hyperconnectivity matroid of $2(n-k-1)$ vertices. This coincidence led us to conjecture that the rows of the bipartite hyperconnectivity matrix for generic points in dimension $k$ realize $\OvAss{k}{n}$ as a polytope, or at least as a complete fan.

For any given choice of points $p_1,\dots,p_{n-k-1},p_1',\dots,p_{n-k-1}' \in \R^k$ in general position, the rows of their bipartite hyperconnectivity matrix give a real vector configuration $\VV =\{p_{ij}\}_{i,j}$ of rank $2kn-3k^2-2k$. The question then is whether using those vectors as generators makes $\OvAss{k}{n}$ be a fan, and whether this fan is polytopal. Here, by  bipartite hyperconnectivity matroid we mean the generic $k$-hyperconnectivity matroid of the complete bipartite graph. 

All the realizations of $\OvAss{k}{n}$ that we construct use this strategy for positions  of the points along the \emph{moment curve} $\{(1,t,t^2,\dots,t^{k-1})\in\R^k:t\in \R\}$. One of the reasons to restrict our search to the moment curve is that in our previous paper~\cite{CreSan:moment} we show that, for points along the moment curve, the vector configuration obtained with hyperconnectivity coincides (modulo linear isomorphism) with configurations coming from the other two forms of rigidity: bar-and-joint rigidity along the moment curve and cofactor rigidity along the parabola. This makes our realizations more ``natural'', since they can be interpreted in the three versions of rigidity. Another reason is that being in the moment curve has some implications about the signs in the oriented matroid (Lemma \ref{lemma:chsign}).

Since our points lie in the moment curve, we can refer to each point $(1,t,\dots, t^{k-1})$ via its parameter $t$.

To realize the fan, a partial result is that the multitriangulations are bases: it says that at least the individual cones have the right dimension and are simplicial.

\begin{theorem}\label{thm:triangbases}
	Reduced bipartizations of $k$-triangulations are bases in the generic bipartite $k$-hyperconnectivity matroid, and independent in the generic $k$-rigidity matroid.
\end{theorem}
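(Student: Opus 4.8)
The plan is to prove the stronger of the two claims, namely that the reduced bipartization $B$ of any $k$-triangulation $T$ is a \emph{basis} of the generic bipartite $k$-hyperconnectivity matroid on $2(n-k-1)$ vertices; independence in the generic $k$-rigidity matroid will then follow because the rigidity matrix is a sub/quotient configuration (or via the moment-curve comparison of \cite{CreSan:moment}). Since $|B| = 2kn - 3k^2 - 2k$ equals the rank of the ambient matroid, it suffices to show that $B$ is \emph{independent}: the corresponding rows of the bipartite hyperconnectivity matrix, for generic $p_1,\dots,p_{n-k-1},p_1',\dots,p_{n-k-1}'\in\R^k$, are linearly independent. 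Equivalently, by the standard duality between hyperconnectivity (co)stresses and the kernel of the rigidity map, one must show that the only assignment of ``stresses'' $\omega_{ij}$ supported on $B$ satisfying the equilibrium conditions $\sum_{j:(i,j)\in B}\omega_{ij}\,p_j' = 0$ for every left vertex $i$ and $\sum_{i:(i,j)\in B}\omega_{ij}\,p_i = 0$ for every right vertex $j'$ is the zero assignment. Because the points are generic, it is enough to exhibit \emph{one} placement for which this holds.

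The key combinatorial input is Jonsson's structure theory of $k$-triangulations, together with the ``inflation/flattening'' dictionary that the paper has already set up: a $k$-triangulation $T$ of the $n$-gon decomposes into $k$-stars, and its bipartization $B$ inherits a corresponding decomposition. I would argue inductively on $n$, peeling off a vertex. Concretely, pick an ear of $T$ — a vertex $v$ of the $n$-gon whose incident diagonals in $T$ are as constrained as possible (Jonsson/Nakamigawa show every $k$-triangulation has a vertex of degree exactly $k$ among non-boundary diagonals, giving the well-known flip/contraction structure). Under bipartization this vertex produces a left vertex (or a matched left/right pair) of controlled degree in $B$; deleting it should reduce $B$ to (a graph containing) the reduced bipartization of a $k$-triangulation of the $(n-1)$-gon, up to boundary edges. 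Then one shows the deleted rows are independent from the rest by a generic-position / rank-count argument: the new vertex contributes exactly $k$ new coordinates and its incident edges in $B$ number exactly $k$ more than... — here one must be careful, because the peeled vertex typically contributes edges on \emph{both} sides. The honest inductive unit is probably a pair of ``antipodal'' vertices $i$ and $(n+1-i)'$, which together with the reordering convention of the bipartization behave symmetrically; removing such a pair and the appropriate boundary edges should leave the reduced bipartization of a smaller multitriangulation.

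An alternative, and perhaps cleaner, route is to go through the algebraic matroid directly. By Theorem~\ref{thm:algebraic}, $B$ is a basis of the generic bipartite $k$-hyperconnectivity matroid iff the coordinate projection $\pi_B$ of the relevant determinantal variety (rank-$\le k$ matrices, or its projection as discussed around \cite{KTT}) is dominant and finite-to-one onto $\K^B$. Finiteness of the fiber is exactly the statement that a partial matrix whose specified entries are indexed by $B$ has only finitely many rank-$k$ completions for generic specified values. One can then try to prove this by the same peeling: fixing the entries on $B$ restricted to all but one vertex, the remaining entries in that row/column are determined up to finitely many choices by the rank condition (each $(k+1)\times(k+1)$ minor through a new entry gives a polynomial constraint, and $k$-triangulation structure guarantees enough minors to pin it down). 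The main obstacle I anticipate is precisely this last point: showing that the minors coming from $B$ are ``enough'' — i.e., that the local system one writes down at each peeled vertex is genuinely $0$-dimensional and not accidentally positive-dimensional. This is where the fine combinatorics of $k$-triangulations (which $(k+1)$-tuples of $B$-edges form crossings, and Jonsson's count of stars) must be used in an essential way, rather than just the edge-count coincidence; the reordering in the bipartization, which as the paper notes creates ``fake'' crossings, is exactly the subtlety that must be tracked so that the genuine crossings of $T$ are the ones producing the needed minors. I expect the verification that no degenerate configuration survives — equivalently, that the generic choice of points (or matrix entries) is not in the bad locus — to be the technical heart of the argument.
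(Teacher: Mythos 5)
Your proposal correctly identifies the right reduction (independence suffices by the edge count, and the problem lives in the algebraic matroid of the rank-$\le k$ determinantal variety restricted to the Ferrers diagram), but it stops exactly where the real work begins, and the inductive ``peeling'' you sketch is not what makes the argument go through. The paper's proof of Theorem~\ref{thm:triangbases} (via Theorem~\ref{thm:bipbases}) contains no induction on $n$ and no vertex-removal at all. Instead it uses a Gr\"obner-basis criterion (Proposition~\ref{indep}): for a prime ideal $I$ and a weight vector $v$ for which one has a Gr\"obner basis, any coordinate set avoiding the supports of all leading terms is independent in the algebraic matroid of $I$. The decisive technical input is then that, for \emph{fp-positive} weight vectors, the $(k+1)$-minors contained in the Ferrers diagram $S_k(n)$ form a Gr\"obner basis of $I_k(n)$ whose leading monomials are exactly the bipartite $(k+1)$-crossings (Theorem~\ref{fp}); this is obtained by exhibiting $\phi(I_k(n))$ as the initial ideal $\ini_{v_P}(I_k^P(n))$ of the Pfaffian ideal (Lemma~\ref{initial}) and importing the Gr\"obner-basis result for Pfaffians from \cite{CreSan:Pfaffians}. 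Since a $(k+1)$-free set contains no crossing monomial support, independence of \emph{all} $(k+1)$-free bipartite graphs follows at once, and Jonsson's count (Theorem~\ref{free}) upgrades maximal ones to bases. The rigidity half is then Theorem~\ref{hyper-bipartite} (bipartite hyperconnectivity equals bar-and-joint for points in two parallel hyperplanes), not a generic ``sub/quotient'' argument.

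The gap in your proposal is therefore concrete: you never establish that the minors indexed by $B$ (or by the complement of $B$ in the crossing structure) suffice to force independence, and you acknowledge this yourself (``the technical heart''). The peeling route faces the obstacle you already spotted --- a vertex of the $n$-gon splits into a left and a right vertex of the bipartization, so deleting an ``ear'' does not produce the reduced bipartization of a $k$-triangulation of the $(n-1)$-gon, and the rank bookkeeping ($k$ new coordinates on each side versus the $2k$ relevant edges at the peeled vertex distributed over both sides) does not close without substantial extra combinatorics. The missing idea is not finer control of degenerate configurations but the switch to initial ideals: once one knows the leading terms are the crossings, genericity and degeneracy questions disappear entirely.
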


\begin{remark}
	In this theorem it is important to consider the bipartite $k$-hyper\-connectivity because the general $k$-hyperconnectivity has a higher rank $2k(n-k-1)-\binom{k+1}2$. Reduced bipartizations cannot be bases of that  matroid.
\end{remark}

By Corollary \ref{bipfree}, this theorem implies a previous result of ours:

\begin{theorem}[{\cite[Corollary 2.17]{CreSan:Pfaffians}}]
	\label{thm:hyperconnectivity}
	$k$-triangulations are bases in the generic hyperconnectivity matroid of dimension $2k$.
\end{theorem}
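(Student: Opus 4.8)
The plan is to translate the statement into a matrix-completion question through the algebraic matroid of a determinantal variety, and then settle that question by induction on $n$. Write $m:=n-k-1$, so that a reduced bipartization lives on the vertex set $[m]\cup[m]'$ and has $2km-k^2=2kn-3k^2-2k$ edges, exactly the rank of the generic bipartite $k$-hyperconnectivity matroid. The first step is to identify that matroid with the algebraic matroid of the variety $V\subset\K^{m\times m}$ of matrices of rank at most $k$ (more precisely, with the relevant projection of a determinantal variety): parametrizing such a matrix by $x_{ij}=\langle a_i,b_j\rangle$ with $a_i,b_j\in\K^k$, the differential of this parametrization at a generic point is, up to a sign on one block of columns and relabeling, precisely the bipartite hyperconnectivity matrix with left points $a_i$ and right points $b_j$; hence the algebraic matroid of $V$ on the ground set $[m]\times[m]=E(K_{m,m})$ equals the generic bipartite $k$-hyperconnectivity matroid. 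This is the link between bipartite graphs and low-rank matrix completion already pointed out in \cite{KTT}. By Theorem~\ref{thm:algebraic}, a set $B$ of edges is independent if and only if $\pi_B(V)$ is Zariski dense in $\K^B$, and spanning if and only if $\pi_B$ has finite fibers; since $|B|=\dim V$ whenever $B$ is a reduced bipartization of a $k$-triangulation, the property of being a basis is equivalent to $\pi_B$ being dominant, and equivalently to $\pi_B$ being finite-to-one. In completion terms: a generic partial $m\times m$ matrix supported on $B$ admits finitely many, and at least one, completions of rank at most $k$.

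I would then prove this completion property by induction on $n$, using the recursive structure of $k$-triangulations. The base cases $n\le 2k+3$ are immediate. For the inductive step, use that every $k$-triangulation $T$ of the $n$-gon has an \emph{ear} --- a vertex whose incident diagonals form the minimal local configuration --- so that deleting it yields a $k$-triangulation $T'$ of the $(n-1)$-gon whose reduced bipartization $B'$ is, by the inductive hypothesis, a basis of the corresponding smaller matroid. Translating the ear through the bipartization, while keeping careful track of the reversal of the order on the right-hand vertices, it becomes a row together with a column of the partial matrix, whose supported entries are read off from the local picture of $T$; one shows that the remaining entries of that row and column are determined, up to finitely many choices, by the rest of the data (that is, that certain square submatrices are generically nonsingular), and that after discarding them the remaining support pattern is exactly a relabeling of $B'$. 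Applying the induction to $B'$ then gives finiteness of the fibers of $\pi_B$, hence the basis property. It is prudent to first run this for an explicit $k$-triangulation, such as the \emph{snake}, where the completion can be produced by hand.

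The hardest part, I expect, is precisely this inductive step: identifying which entries of the ear row and column belong to $B$, and proving that the submatrix one inverts to recover the others is generically nonsingular. That nonvanishing is not formal; it calls for a direct determinant computation in the chosen moment-curve coordinates, or for an argument exploiting the $k$-stars of $T$ near the ear. An alternative that shifts the difficulty elsewhere is to establish the basis property for a single explicit $k$-triangulation and then propagate it along the flip graph, which is connected: one must then verify that a flip, swapping a diagonal $\delta$ for a diagonal $\delta'$ across a bounded configuration of $k$-stars, induces on reduced bipartizations a base exchange of the matroid --- that is, that the bipartizations of $\delta$, of $\delta'$, and of the diagonals framing that configuration form a single circuit containing both $\delta$ and $\delta'$. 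Showing this dependency is minimal and that it involves $\delta$ is then the obstacle.

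Finally, the second assertion follows from the first. Applying the main result of \cite{CreSan:moment} to the $2m$ duplicated points, all of which lie on the moment curve in $\R^k$, the bipartite $k$-hyperconnectivity and the $k$-dimensional bar-and-joint rigidity vector configurations coincide up to linear isomorphism at moment-curve positions; since the argument above can be carried out at those positions --- the determinantal computation only requires the points to avoid a proper subvariety, and genericity of moment-curve configurations in it can be verified by the same nonvanishing statements --- the reduced bipartization of a $k$-triangulation is independent in $k$-dimensional bar-and-joint rigidity with points on the moment curve, and hence, independence being a Zariski-open condition on the positions, in the generic $k$-rigidity matroid.
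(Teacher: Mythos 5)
There is a genuine gap, and it is two-fold. First, you are proving a different statement. Theorem \ref{thm:hyperconnectivity} asserts that $k$-triangulations themselves (subgraphs of $K_n$) are bases of the \emph{non-bipartite} generic hyperconnectivity matroid in dimension $2k$; your argument is aimed at Theorem \ref{thm:triangbases}, namely that \emph{reduced bipartizations} are bases of the \emph{bipartite} hyperconnectivity matroid in dimension $k$ (your ``second assertion'' about $k$-rigidity confirms this is what you had in mind). The paper does deduce Theorem \ref{thm:hyperconnectivity} from the bipartite statement, but only through Corollary \ref{bipfree}, which is not a formality: it rests on Lemma \ref{initial}, asserting that the bipartite determinantal ideal $I_k(n)$ is, after the relabelling $\phi$, an initial ideal of the Pfaffian ideal $I_k^P(n)$ for the weight $v_P$; a polynomial in $I_k^P(n)$ supported on $E$ then has an initial form supported on the bipartization of $E$, so dependence in dimension $2k$ forces dependence of the bipartization in dimension $k$. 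There is no direct matrix-level reduction between the two hyperconnectivity matrices, and the converse implication is false ($K_{2k+2}\setminus\{1,2\}$ is a basis in dimension $2k$ while its reduced bipartization $K_{k+1,k+1}$ is a circuit in dimension $k$), so this bridge must be supplied explicitly and its direction checked.

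Second, even as a proof of the bipartite statement, your inductive step is not a proof: the generic nonsingularity of the square submatrices needed to recover the ear's row and column is essentially the content of the theorem, and you acknowledge leaving it open (likewise for the circuit computation in the flip-graph alternative, and the existence of the required ear would itself need justification). The paper's route is entirely different and noninductive: after identifying the bipartite matroid with the algebraic matroid of the determinantal variety (your first step, which is correct and matches Theorem \ref{thm:hyper-algebraic}), it invokes Proposition \ref{indep} --- subsets of variables avoiding the supports of the leading terms of a Gr\"obner basis are algebraically independent --- together with Theorem \ref{fp}, which shows that for fp-positive weight vectors the $(k+1)$-minors are a Gr\"obner basis whose leading terms are precisely the $(k+1)$-crossing monomials. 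This yields independence of \emph{all} $(k+1)$-free sets at once, and Jonsson's count of the edges of maximal $(k+1)$-free sets upgrades independence to the basis property. For the statement actually in question, the same mechanism applies directly to the Pfaffian ideal (whose algebraic matroid is the generic $2k$-hyperconnectivity matroid, and for which the degree-$(k+1)$ Pfaffians are a Gr\"obner basis for fp-positive orders), without passing through bipartizations at all.
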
 

First, we pose the conjecture that positions along the moment curve realizing $\OvAss{k}{n}$ as a basis collection exist for every $k$ and $n$:

\begin{conjecture}
	\label{conj:rigid}
	Reduced bipartized $k$-triangulations of the $n$-gon are bases in the bar-and-joint rigidity matroid of generic points along the moment curve in dimension $k$.
\end{conjecture}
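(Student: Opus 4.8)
The plan is to peel off, one reduction at a time, until Conjecture~\ref{conj:rigid} becomes a single non-vanishing statement for a polynomial in the moment-curve parameters, and then to prove that non-vanishing by induction on $n$, using the recursive structure of $k$-triangulations (concretely, their decomposition into $k$-stars) to put the relevant matrix in block-triangular form whose diagonal blocks are Vandermonde-type.

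\emph{Reductions.} A reduced bipartization of a $k$-triangulation has exactly $2kn-3k^2-2k=k(m+m)-k^2$ edges with $m=n-k-1$, which is the rank of the generic bipartite bar-and-joint $k$-rigidity matroid of $K_{m,m}$ (the rank of $K_{m,m'}$ in that matroid is $k(m+m')-k^2$ for $m,m'\ge k$, by Bolker--Roth). Hence ``basis'' is equivalent to ``independent set'', and it suffices to show that the $2kn-3k^2-2k$ rows of the $k$-dimensional bar-and-joint rigidity matrix of $2m$ points along the moment curve indexed by a reduced bipartized $k$-triangulation are linearly independent. Next, parametrize the left points by $t_1,\dots,t_m$ and the right points by $s_1,\dots,s_m$ on the moment curve; the bipartite rigidity matrix is then literally the submatrix of rows, indexed by the bipartite edges, of the ordinary rigidity matrix of the $2m$ points $\{t_i\}\cup\{s_j\}$. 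Since the linear isomorphism of \cite{CreSan:moment} identifying bar-and-joint rigidity with hyperconnectivity along the moment curve acts on the column space and is independent of the graph, restricting it to those rows identifies the bipartite rigidity matrix with the bipartite hyperconnectivity matrix of the same moment-curve points. So Conjecture~\ref{conj:rigid} is equivalent to: \emph{reduced bipartizations of $k$-triangulations are independent in the bipartite $k$-hyperconnectivity matroid of generic points on the moment curve}, as opposed to generic points of all of $\R^k$, for which this is Theorem~\ref{thm:triangbases}.

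\emph{The core statement and the induction.} Fix a reduced bipartization $B$; by Theorem~\ref{thm:triangbases} the maximal minor $D$ cut out by the $B$-rows of the bipartite hyperconnectivity matrix together with a coordinate basis of the ambient space is a nonzero polynomial in the point coordinates. Substituting moment-curve coordinates $p_i=(1,t_i,\dots,t_i^{k-1})$ and $p'_j=(1,s_j,\dots,s_j^{k-1})$ yields a polynomial $\widetilde D(t_1,\dots,t_m,s_1,\dots,s_m)$ which could a priori vanish identically; this possibility is precisely the gap between Theorem~\ref{thm:triangbases} and Conjecture~\ref{conj:rigid}, and the whole problem is to prove $\widetilde D\not\equiv 0$. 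I would prove this by induction on $n$. The base cases $n\le 2k+3$ follow from the explicit descriptions of $\OvAss{k}{n}$ recalled after Conjecture~\ref{conj:polytope} (for $n=2k+1$, $B$ is all of $K_{k,k}$ and the matrix is block-Vandermonde). For the inductive step, use the decomposition of a $k$-triangulation $T$ of the $n$-gon into its $n-2k$ $k$-stars (Pilaud--Santos): order the stars from the ``outside in'', carry this order over to the bipartization, and order the vertices of the reduced bipartization compatibly. Then the $B$-submatrix of the hyperconnectivity matrix should acquire a block-staircase form in which each diagonal block collects the new rows and new columns contributed by one $k$-star on moment-curve points; each such block, after column operations, is a generalized Vandermonde matrix, hence nonsingular whenever the parameters are distinct, so block-triangularity gives $\widetilde D\ne 0$. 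An equivalent formulation runs the induction as a sequence of rigidity $0$-extensions --- adding a left vertex and a right vertex, each of degree $k$ and joined to $k$ vertices consecutive along the moment curve --- using that $0$-extensions preserve independence as soon as the $k$ supporting points are in linearly general position, which is automatic for $k$ distinct points on the moment curve.

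\emph{Main obstacle.} The entire difficulty is in this combinatorial step: producing an order on the $k$-stars (equivalently, on the vertices of the reduced bipartization) that genuinely triangularizes the $B$-submatrix --- so that each newly added star introduces edges only to vertices already present plus a controlled set of new ones --- and verifying that the diagonal blocks are honestly nonsingular rather than merely generically so. This is delicate because reduced bipartizations of $k$-triangulations do not have a uniform degree sequence: already the last left vertex may have degree $k$ or $k+1$, so the naive ``peel a minimum-degree vertex'' argument does not apply directly, and one has to use the $k$-star structure (or an appropriate mixture of $0$- and $1$-extensions) instead. One must also track carefully the reversal of the right-vertex order in the bipartization, since it is what makes the supporting vertices of each block consecutive on the moment curve and hence the block a genuine Vandermonde. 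A direct alternative --- factoring $\widetilde D$ explicitly into Vandermonde-type factors in the differences $t_i-t_j$, $s_i-s_j$, $t_i-s_j$, in the spirit of Cayley--Menger determinant computations --- encounters the same bottleneck, together with the added task of identifying and controlling the residual non-Vandermonde factor.
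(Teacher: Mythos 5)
You have not proved Conjecture~\ref{conj:rigid}, and indeed the paper does not either: this statement is posed as an open conjecture. What the paper proves is the weaker Theorem~\ref{thm:triangbases} (genericity over all of $\R^k$, via the identification of bipartite hyperconnectivity with the algebraic matroid of a determinantal variety and a Gr\"obner-basis argument), it observes that the case $k=2$ follows because $\PP_2(n)=\HH_2(n)$, and it proves negative results (Theorems~\ref{thm:Desargues} and~\ref{thm:2k+6}) showing that the stronger ``arbitrary distinct parameters'' version fails for $k\ge 3$. Your preliminary reductions are essentially the paper's own framing: basis $\Leftrightarrow$ independence by the rank count, and bar-and-joint along the moment curve $\Leftrightarrow$ hyperconnectivity there by Theorem~\ref{thm:3matroids}; one correction, though, is that the rank $k(m+m')-k^2$ is not the Bolker--Roth generic bar-and-joint rank of $K_{m,m'}$ (which is $k(m+m')-\binom{k+1}{2}$ for large $m,m'$) but is special to bipartite hyperconnectivity, equivalently to moment-curve positions (Theorem~\ref{thm:complete_bipartite} and the remark following it). After these reductions you correctly isolate the open content --- that the polynomial $\widetilde D$ does not vanish identically on the moment curve --- but you do not close it; you explicitly leave the triangularization step unresolved, so this is a research plan rather than a proof.

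Moreover, the mechanism you propose cannot work in the clean form you describe. If the $B$-submatrix admitted, for some ordering of the $k$-stars, an exact block-triangular form whose diagonal blocks are nonsingular for all distinct parameters, then $\widetilde D$ would be nonzero for \emph{every} choice of distinct moment-curve parameters; but Theorem~\ref{thm:Desargues} exhibits distinct parameters ($\bt=(1,3,4,5,7,1,3,4,5,7)$) for which the reduced bipartization of a $3$-triangulation of the $9$-gon is dependent. Consistently, Theorem~\ref{cube} shows that the relevant determinants contain factors that are differences of cross-ratios, i.e.\ irreducible polynomials in the $t_i,t_j'$ that are not products of Vandermonde-type linear forms; this is the ``residual non-Vandermonde factor'' you mention, and controlling it (equivalently, the sign conditions of Theorem~\ref{thm:star} and their higher analogues) is exactly where the difficulty lives, so genericity must enter essentially rather than as an afterthought. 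Two further technical points: a generalized Vandermonde matrix with distinct real (not necessarily positive) nodes need not be nonsingular, so even the diagonal blocks require care; and the $0$-extension peeling is unavailable in general because reduced bipartizations have vertices of degree larger than $k$, while $1$-extensions/vertex splittings at \emph{prescribed} moment-curve positions are not known to preserve independence. In short, the reductions are sound but the core of the conjecture remains untouched, and the specific Vandermonde-block strategy is contradicted by the paper's own counterexamples unless it is modified to yield only generic non-vanishing.
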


This conjecture is a stronger version of Theorem \ref{thm:triangbases}. It would imply the same for generic bar-and-joint rigidity. The case $k=2$ is a consequence of Theorem \ref{thm:triangbases}, because in dimension 2 hyperconnectivity coincides with polynomial rigidity.

In fact, our experiments make us believe that in this case the  word ``generic'' can be changed to ``arbitrary'': all the positions along the moment curve that we have tried realize $\OvAss{k}{n}$ at least as a fan.

\begin{conjecture}
	\label{conj:rigid-k=2}
	Reduced bipartized $2$-triangulations are bases in the bipartite bar-and-joint rigidity matroid of arbitrary (distinct) points along the parabola.
\end{conjecture}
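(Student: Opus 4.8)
The plan is to deduce Conjecture~\ref{conj:rigid-k=2} from the $k=2$ case of Theorem~\ref{thm:triangbases} by a degeneration argument. Fix a reduced bipartized $2$-triangulation, with edge set $E\subset[m]\times[m]$, $m=n-3$, so that $|E|=4n-16=4m-4$; place left vertex $i$ at $p_i=(a_i,a_i^2)$ and right vertex $j$ at $q_j=(b_j,b_j^2)$ with the $2m$ parameters distinct, and let $R_E$ be the bar-and-joint rigidity matrix of this planar framework. Since all $2m$ points lie on the parabola $\{y=x^2\}$, the framework always carries, besides the three trivial infinitesimal flexes, the ``gradient'' flex $v_i=(-2a_i,1)$ on the left and $w_j=(2b_j,-1)$ on the right; a direct computation shows this is a genuine flex and, as soon as $m\ge 2$, is not trivial. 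Hence $\operatorname{rank} R_E\le 2\cdot 2m-4=4m-4$ for \emph{every} choice of the parameters, and since $|E|=4m-4$ the assertion ``$E$ is a basis'' is equivalent to ``the $4m-4$ rows of $R_E$ are linearly independent'', i.e.\ ``the framework has no nonzero self-stress''. For generic parameters this is Theorem~\ref{thm:triangbases}, because in dimension $2$ hyperconnectivity agrees with bar-and-joint rigidity along the parabola; so what must be proved is that no degeneracy of the parameters creates a self-stress.

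Next I would use the simple form the self-stress equations take on the parabola. Dividing the row of an edge $(i,j)\in E$ by $a_i-b_j\ne 0$ turns it into $(1,a_i+b_j)$ in the block of left vertex $i$ and $-(1,a_i+b_j)$ in the block of right vertex $j$. A self-stress is then a vector $e=(e_{ij})_{(i,j)\in E}$ satisfying
\[
\sum_{j\,:\,(i,j)\in E} e_{ij}=0 \quad\text{and}\quad \sum_{j\,:\,(i,j)\in E} e_{ij}b_j=0 \qquad\text{for every left vertex } i,
\]
\[
\sum_{i\,:\,(i,j)\in E} e_{ij}=0 \quad\text{and}\quad \sum_{i\,:\,(i,j)\in E} e_{ij}a_i=0 \qquad\text{for every right vertex } j.
\]
The first and third conditions confine $e$ to the cycle space of the bipartite graph $([m]\cup[m]',E)$, of dimension $|E|-2m+1=2m-3$ (the graph is connected, as follows from Theorem~\ref{thm:triangbases}); the remaining $2m$ conditions, which involve the positions, overdetermine it by $3$, and the goal is that for every distinct choice of parameters they cut the cycle space down to $0$.

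To establish this it suffices to show that some maximal minor $D(a,b)$ of $R_E$ (obtained by deleting four columns, chosen once and for all to absorb the four flexes) is, up to a nonzero constant, a product of the linear forms $a_i-a_j$, $b_i-b_j$, $a_i-b_j$: then $D$ is nonzero whenever the parameters are distinct, which is what we want. We know $D\not\equiv 0$ from the generic case, so one needs (i) an upper bound on $\deg D$ and (ii) lower bounds on the order of vanishing of $D$ along each hyperplane $\{a_i=a_j\}$, $\{b_i=b_j\}$, $\{a_i=b_j\}$, with the two matching so that $D$ can carry no further irreducible factor; the vanishing orders should be read off from the way the bipartite rigidity matroid of the configuration degenerates when two of the points collide, and this is where the combinatorics of the $2$-triangulation enters. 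Equivalently one can fix all but one parameter, treat $D$ as a univariate polynomial, bound its degree, and exhibit the remaining parameters as its roots. A third, more geometric route is an induction on $n$ via the ear structure of $2$-triangulations: a $2$-triangulation of the $n$-gon has a vertex whose deletion yields a $2$-triangulation of the $(n-1)$-gon, and descending through the reduced bipartization one reinserts the corresponding vertex by a rank-preserving move; when the reinserted vertex has bipartite degree $2$ the required independence is automatic because no three points of a parabola are collinear — the two new columns carry $(1,a_i+b_{j_1})$ and $(1,a_i+b_{j_2})$ with $b_{j_1}\ne b_{j_2}$ — and for higher-degree reinsertions one needs the analogue of a Henneberg move adapted to bipartite graphs, the non-degeneracy condition again reducing to the non-vanishing of a confluent-Vandermonde type minor.

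The main obstacle is precisely step (ii): upgrading ``generically nonzero'' to ``nonzero for all distinct parameters'', which is exactly the gap between Theorem~\ref{thm:triangbases} and Conjecture~\ref{conj:rigid-k=2}. One must control, uniformly over $n$ and over all reduced bipartized $2$-triangulations, the total degree of the relevant minor together with its order of vanishing along each wall, and show these balance so that no spurious factor survives. This is where the special position on the parabola has to be used in an essential way: the same phenomenon that drops the rank from $4m-3$ to $4m-4$ (there is no conic through six of the points unless this is forced) is what should prevent accidental coincidences for non-generic but distinct parameters. Making this bookkeeping of vanishing orders work for all $n$ appears to require genuinely new combinatorial input about multitriangulations rather than a purely formal argument, which is consistent with the conjectural status of the statement.
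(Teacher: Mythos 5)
The statement you are addressing is stated in the paper as a \emph{conjecture}, and the paper offers no proof of it: the only thing actually established there is the generic case (Theorem~\ref{thm:triangbases}, via the Gr\"obner-basis argument for the determinantal ideal, together with the identification of $2$-dimensional hyperconnectivity with bar-and-joint rigidity on the parabola), plus experimental evidence for small $n$. Your write-up is consistent with this: the setup is correct (the count $|E|=4m-4$, the extra flex on the conic dropping the rank to $4m-4$, the normalized self-stress equations $\sum_j e_{ij}=\sum_j e_{ij}b_j=0$ and their transposes, the reduction to ``no nonzero self-stress for any distinct parameters''), and you correctly locate the entire difficulty in upgrading ``generically nonzero'' to ``nonzero on the complement of the diagonals.''

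But that upgrade is the whole content of the conjecture, and none of the three routes you sketch is carried out. The factorization strategy requires showing that a fixed maximal minor $D(a,b)$ vanishes along \emph{only} the hyperplanes $\{a_i=a_j\}$, $\{b_i=b_j\}$, $\{a_i=b_j\}$, with vanishing orders whose total matches $\deg D$; you give neither the degree count nor the local analysis of the degenerated matroid at a wall, and there is no a priori reason the irreducible factors of $D$ are all linear (for $k\ge 3$ the paper's Theorem~\ref{thm:Desargues} shows they are not: a cross-ratio condition, i.e.\ a non-product hypersurface, appears). The inductive route via vertex reinsertion only closes when every reinserted vertex has bipartite degree $2$, which is false for general reduced bipartized $2$-triangulations, and the ``bipartite Henneberg move'' needed for higher degrees is exactly where a genericity hypothesis would re-enter. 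So the proposal is a reasonable research plan for an open problem, not a proof; since the paper itself leaves Conjecture~\ref{conj:rigid-k=2} open, there is no argument of the paper's to compare it against.
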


\begin{conjecture}\label{conj:fan}
	Every choice of ordered parameters $t_1<t_2<\dots < t_{n-3}, t'_{1}<\dots  <t'_{n-3} \in \R$
	gives a configuration in the moment curve that realizes $\OvAss{2}{n}$ as a fan.
\end{conjecture}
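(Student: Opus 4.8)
The plan is to verify the standard criterion under which a vector realization of a simplicial sphere is a complete simplicial fan: that every facet is mapped to a linear basis, and that across every interior wall the two incident facets lie on opposite sides of the hyperplane spanned by that wall. Since $\OvAss{2}{n}$ is a shellable $(2n-11)$-sphere by Jonsson~\cite{Jonsson}, these two conditions suffice (see e.g.\ \cite{CreSan:Multi}). To set things up, fix once and for all an ordering of the relevant diagonals of the $n$-gon; for a relevant diagonal $\delta$ let $v_\delta\in\R^{4n-16}$ be the row indexed by $\delta$ in the bipartite $2$-hyperconnectivity matrix of the points $(1,t_i)$, $(1,t'_j)$ on the moment curve, let $L\subset\R^{4n-16}$ be the span of the rows indexed by the irrelevant edges common to all reduced bipartizations (a subspace of dimension $2n-6$), and let $\bar v_\delta$ be the image of $v_\delta$ in $\R^{4n-16}/L\cong\R^{2n-10}$. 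Writing $M(T):=\det(\bar v_\delta)_{\delta\in T}$ for the maximal minor attached to a $2$-triangulation $T$ (a square matrix, since $T$ has exactly $2n-10$ relevant diagonals), the two conditions become: (a) $M(T)\ne 0$ for every $T$; and (b) for every flip $T\leftrightarrow T'$ exchanging relevant diagonals $f$ and $f'$, the sign of the quotient $M(T)/M(T')$, corrected by the explicit permutation sign coming from the positions of $f$ and $f'$ in the fixed order, is negative. So everything reduces to understanding the sign of $M(T)$ as a function of $T$ and of the ordered parameters.

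For (a), note that on the moment curve $v_\delta$ is assembled from the two vectors $(1,t_i),(1,t'_j)$ at the endpoints of $\delta$, so $M(T)$ is a polynomial in the $t_i,t'_j$ which, by Theorem~\ref{thm:triangbases}, is not identically zero; the point is that it must not vanish on the order simplex $t_1<\dots<t_{n-3}$, $t'_1<\dots<t'_{n-3}$. The two natural approaches are (i) a Lindström--Gessel--Viennot–type expansion of $M(T)$ along the star structure of the $2$-triangulation, with all nonzero contributions carrying the same sign; or (ii) induction on $n$, expanding $M(T)$ along a small set of rows localised near the vertices $n-2,n-1,n$ and reducing to a $2$-triangulation of a smaller polygon, the cases $n\le 2k+3$ being trivial. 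For $k=2$ condition (a) is in any case essentially Conjecture~\ref{conj:rigid-k=2} (equivalently Conjecture~\ref{conj:rigid} for $k=2$), which should follow from Theorem~\ref{thm:triangbases} together with the coincidence of $2$-hyperconnectivity and rigidity on the parabola.

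For (b), I would compare $M(T)$ with $M(T')$ for a flip $f\leftrightarrow f'$. Using the oriented-matroid information forced by the moment curve (Lemma~\ref{lemma:chsign}, suitably extended to the bipartite configuration), one should be able to read off $\operatorname{sign}\!\big(M(T)/M(T')\big)$ as a product of signs of parameter differences attached to the ``star polygon'' in which the flip occurs, and this product should depend only on the combinatorial type of the flip; combining it with the permutation-parity correction — which records the parity of the number of relevant diagonals of the shared wall lying between $f$ and $f'$ in the fixed order — then yields the required negativity. Induction on $n$ is again the natural framework: a flip not involving the vertices $n-2,n-1,n$ inherits the separation property from the analogous realization of a smaller multiassociahedron (such as $\OvAss{2}{n-1}$, or $\OvAss{1}{n-2}$ appearing as a link), so only finitely many flip types need to be checked by hand.

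The main obstacle is uniformity. Both (a) and (b) must hold for \emph{every} ordered choice of parameters, with no genericity available, so one genuinely needs to show that the oriented matroid of the bipartite hyperconnectivity configuration on the moment curve is constant on the order simplex and has the ``correct'' circuits and cocircuits, and then to control the resulting minor signs over the exponentially large family of $2$-triangulations and their flips. The alternative route, through the tropicalization of the projected determinantal variety — showing that the coordinate projection $\R^\bnn\to\R^{4n-16}$ is injective on the Gröbner cone of $(k+1)$-crossings, mimicking the $k=1$ construction of the associahedron via $\g$-vectors — meets exactly the same sign-control difficulty in a different language. The computer verification that the construction produces a complete fan for all $k=2$, $n\le 13$ and every ordered choice of parameters tested supports the conjecture, but of course does not settle the general case.
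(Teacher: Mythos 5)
The statement you are addressing is stated in the paper as Conjecture~\ref{conj:fan}, and the paper offers no proof of it; the authors say explicitly that this is the main open question of the paper. Your text is a strategy outline rather than a proof, and you essentially acknowledge this: step (a) is reduced to Conjecture~\ref{conj:rigid-k=2}, which is itself open (Theorem~\ref{thm:triangbases} only gives that reduced bipartized $2$-triangulations are bases for \emph{generic} parameters, and the equality $\PP_2(n)=\HH_2(n)$ is likewise a statement about the generic matroids, so neither yields non-vanishing of $M(T)$ for \emph{every} ordered choice of $\bt$), and step (b) is left at the level of ``one should be able to read off'' the sign of $M(T)/M(T')$, with no argument actually controlling these signs uniformly over all flips and all ordered parameter choices. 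The gap is therefore the entire content of the conjecture, and your final paragraph correctly identifies it.

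There is also a concrete error in the framework itself: the criterion you invoke --- every facet a basis plus opposite-side separation across every interior wall --- is not sufficient for a vector configuration to realize a simplicial sphere as a complete fan. The paper's Theorem~\ref{thm:fan} requires, in addition to the basis collection and the interior cocircuit property, a third condition on codimension-$2$ faces whose links are cycles of length five (equivalently, one may instead verify the global condition that some point is covered by exactly one cone, which is what the authors check computationally). Any attempt at Conjecture~\ref{conj:fan} must handle this third condition as well; the paper's proof of the $n=2k+3$ case (Theorem~\ref{thm:star}) shows what is involved, with Lemma~\ref{lemma:correct} introduced precisely to deal with the pentagonal links.
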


Observe that, by continuity, Conjecture~\ref{conj:rigid-k=2} implies that if some positions realize $\OvAss{2}{n}$ as a fan then \emph{all} positions do.

However, in the case $k\ge 3$, we show that some positions do not realize $\OvAss{k}{2k+3}$ as a fan (realizing $\OvAss{k}{n}$ for $n<2k+3$ is sort of trivial), and not even as a basis collection:

\begin{theorem}
	\label{thm:Desargues}
	The graph $K_9 - \{16, 37, 49\}$ is a $3$-triangulation of the $n$-gon, but its reduced bipartization is dependent in the bar-and-joint rigidity matroid in the moment curve if the cross-ratio between the hyperplanes $(12,23;24,25)$ equals $(2'4',2'3';1'2',2'5')$.
	This occurs, for example, if we take points along the moment curve with $\bt=(1,3,4,5,7,1,3,4,5,7)$.
\end{theorem}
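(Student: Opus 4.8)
The plan is to reduce the question to a rank computation on an explicit $12\times 12$ matrix (the reduced bipartization of $K_9-\{16,37,49\}$ lives on $[7]\cup[7]'$ and has $2\cdot 3\cdot 9 - 3\cdot 9 - 2\cdot 3 = 21$ edges after reduction on $[n-k-1]=[5]$; actually with $n=9$, $k=3$ we get $n-k-1=5$, so the matrix is $21\times 21$), whose vanishing determinant is exactly the Desargues-type cross-ratio condition. First I would verify the combinatorial claim: that $K_9$ with exactly the three ``long'' diagonals $\{16,37,49\}$ removed is a $3$-triangulation of the $9$-gon. Since $3$-triangulations of the $n$-gon have $k(2n-2k-1) = 3\cdot 11 = 33$ edges and $K_9$ has $36$, a $3$-triangulation is obtained by deleting exactly $3$ edges; one checks that $\{16,37,49\}$ is a valid choice, i.e.\ the remaining graph is $4$-crossing-free and maximal (equivalently, by the theory of \cite{PilSan}, that these three edges form the ``missing'' edges of a $3$-star-free configuration). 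This is the rotationally symmetric choice of three pairwise-crossing long diagonals, which makes the subsequent symmetry arguments clean.

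Next I would set up the bar-and-joint rigidity matrix in dimension $k=3$ for points $p_1,\dots,p_7,p'_1,\dots,p'_7$ along the moment curve (the parabola-in-$\R^3$, i.e.\ $t\mapsto(1,t,t^2)$), restricted to the bipartite edge set of the reduced bipartization. Because the graph is bipartite with all edges between a left vertex $i$ and a right vertex $j'$, the rigidity row for edge $(i,j)$ has the familiar form $(\,\dots,\,p_i-p'_j \text{ in block } i,\,\dots,\,p'_j-p_i \text{ in block } j',\,\dots)$. I would then identify a single ``circuit'' — a minimal dependent set — supported on the subgraph induced by vertices $\{1,2,3,4,5\}\cup\{1',2',3',4',5'\}$ (the reduced side), and show that the reduced bipartization fails to be independent \emph{iff} this circuit's coefficient vector is non-trivial, which happens exactly when a certain $6\times 6$ (or smaller) minor vanishes. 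Expanding that minor and using the Vandermonde/moment-curve structure, it should factor so that the vanishing is equivalent to the equality of the two cross-ratios $(12,23;24,25)$ and $(2'4',2'3';1'2',2'5')$ of hyperplanes (here a ``hyperplane $ij$'' means the plane spanned by $p_i,p_j$ and the origin, or rather the linear functional cutting out the affine line through $p_i,p_j$ — the precise incarnation depends on how hyperconnectivity stress vectors are indexed, and I would make this explicit). The appearance of a cross-ratio identity, i.e.\ a projective (Desargues-type) condition, is expected because the obstruction is genuinely projective: rigidity dependence is invariant under projective transformations of the moment curve, so the obstruction must be a projective invariant of the $5$ left points and the $5$ right points, and the lowest-degree such invariant coupling them is a cross-ratio equality.

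Finally I would verify the explicit example $\mathbf{t}=(1,3,4,5,7,1,3,4,5,7)$ by direct substitution: compute the cross-ratio $(t_1-t_2)(t_2-t_5)\big/\big((t_1-t_5)(t_2-t_3)\cdots\big)$ — the usual four-point cross-ratio of $1,3,4,5$ in the appropriate order — on both sides and check equality. Since the left parameters $(1,3,4,5,7)$ and the right parameters $(1,3,4,5,7)$ are literally the same multiset, and the two cross-ratio expressions are related by the index reversal $i\leftrightarrow (n+1-i)'$ built into the bipartization (which on parameters is an order-reversing, hence projective, involution), the two cross-ratios coincide automatically; this is exactly why a \emph{symmetric} parameter choice lands on the bad locus. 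Concretely one shows $(12,23;24,25)$ evaluated at $(1,3,4,5,7)$ equals $(2'4',2'3';1'2',2'5')$ evaluated at the reversed tuple, and then the reversed tuple of $(1,3,4,5,7)$ is $(7,5,4,3,1)$, whose cross-ratio in the matched order equals the original by the symmetry of cross-ratios under simultaneous reversal.

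The main obstacle I anticipate is the second step: correctly identifying \emph{which} minor of the rigidity matrix controls independence and showing it factors as a difference (or ratio) of the two named cross-ratios. The matrix is $21\times 21$ and a brute-force determinant is hopeless by hand; the trick will be to exhibit the dependence explicitly — write down a candidate stress (a linear combination of rows summing to zero) with undetermined coefficients, use the block structure coming from the bipartite graph and the low-degree structure coming from the moment curve to solve for the coefficients up to one free parameter, and read off the single scalar equation that must hold for a non-zero solution to exist. Organizing the bookkeeping so that this equation manifestly reads as ``cross-ratio $=$ cross-ratio'' — rather than as an opaque polynomial in five Vandermonde differences — is where the real work lies, and is presumably why the paper attaches Desargues' name to it.
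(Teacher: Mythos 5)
Your overall strategy (exhibit one circuit whose degeneration is a cross-ratio identity) is the right shape, but the two places where the proposal defers the work are exactly where it fails. First, the reduction of the $21$-row rank condition to a cross-ratio equality is not just "bookkeeping to be organized": it is the actual content, and the paper supplies a specific mechanism that your proposal lacks. One adds back one missing edge, say $16=(1,4')$, to get a circuit in the bipartization of $K_9-\{37,49\}$; by the bipartite coning/contraction results (Corollary \ref{cor:signconing} and the coning theorem) this circuit is obtained from the circuit of $K_{4,4}$ minus a perfect matching plus one diagonal, computed in $\HH_2$ on the four special vertices of each side; Theorem \ref{cube} then evaluates the coefficient of the extra edge explicitly as a \emph{difference of two cross-ratios}, and contraction turns ordinary cross-ratios into the relative cross-ratios of hyperplanes appearing in the statement. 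The triangulation is dependent precisely when that coefficient vanishes. Without this contraction to dimension $2$ your ``candidate stress with undetermined coefficients'' on a $21\times 30$ matrix has no route to the stated identity.

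Second, your verification of the example is incorrect. The two cross-ratios are \emph{not} the same function of the two parameter tuples: after the index reversal $j\mapsto(10-j)'$ the left one is $(1,3;4,5)$ and the right one is $(4,3;1,5)$, i.e.\ the same four points taken in a different order. If the left and right parameters coincide and the first cross-ratio equals $\lambda$, the second equals $\lambda/(\lambda-1)$, so equality holds only when $\lambda=2$ — it is emphatically not ``automatic'' from symmetry. For $\bt=(1,3,4,5,7)$ one computes $\lambda=\frac{(5-1)(7-4)}{(7-1)(5-4)}=2$, so the example works; for the equally symmetric choice $(1,2,3,4,5)$ one gets $\lambda=3/2$ versus $3$, and this particular $3$-triangulation is a basis. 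Your argument would prove the false claim that every choice with identical left and right parameters is degenerate. The example must be checked by the explicit numerical evaluation, which is presumably why the paper singles out $(1,3,4,5,7)$ rather than the standard positions.
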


This shows that Conjecture~\ref{conj:rigid-k=2} fails for $k\ge 3$, but we prove that it fails in the worst possible way.
We consider this our second main result:

\begin{theorem}\label{thm:2k+6}
	If $k=3$ and $n\ge 12$, or $k\ge 4$ and $n\ge 2k+4$, then no choice of points $\bt\in \R^{2(n-k-1)}$ in the moment curve makes the bar-and-joint rigidity matrix $\PP_k(\bt)$ 
	realize the $k$-associahedron $\OvAss{k}{n}$ as a fan.
\end{theorem}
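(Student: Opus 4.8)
The plan is to reduce the general case to the single obstruction already exhibited in Theorem~\ref{thm:Desargues} by two ingredients: an \emph{inheritance lemma} for fan realizability under deletion of vertices of the $n$-gon, and a \emph{ubiquity} argument showing that the forbidden configuration $K_9-\{16,37,49\}$ (or rather the Desargues-type projective incidence it encodes) is forced to appear, up to relabeling, inside \emph{every} candidate configuration on the moment curve once $n$ and $k$ are large enough. First I would make precise the statement that if $\PP_k(\bt)$ realizes $\OvAss{k}{n}$ as a fan, then for any vertex $v$ of the $n$-gon the ``restricted'' configuration obtained by deleting $v$ (equivalently, taking the appropriate link in the fan, which by the subword-complex description corresponds to $\OvAss{k}{n-1}$ or to a product with a simplex) must also be realized as a fan by the corresponding sub-tuple of parameters. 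This is the standard fact that links of faces in a complete simplicial fan are again complete simplicial fans, combined with the combinatorial identification of vertex-deletion links of multiassociahedra; I would cite the $k$-associahedron-as-subword-complex results of Stump and Pilaud--Santos used earlier in the paper. Iterating, realizability of $\OvAss{k}{n}$ as a fan forces realizability of $\OvAss{k'}{n'}$ as a fan for all the ``sub-instances'' reachable by deleting polygon vertices and, for $k'$, by the standard link-at-an-edge operation that decreases $k$.

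The second and main step is to pin down, for $k\ge 4$ and $n\ge 2k+4$ (respectively $k=3$, $n\ge 12$), a sub-instance that is exactly the bad case of Theorem~\ref{thm:Desargues}, namely $\OvAss{3}{9}$ with its reduced bipartization $K_9-\{16,37,49\}$, and to show that the induced parameter sub-tuple necessarily satisfies the cross-ratio coincidence that Theorem~\ref{thm:Desargues} identifies as fatal. The point is that Theorem~\ref{thm:Desargues} does not merely say ``some points fail'': re-read carefully, it exhibits a specific $3$-triangulation whose reduced bipartization is rigidity-\emph{dependent} precisely when a projective cross-ratio identity holds between four hyperplanes spanned by consecutive points on the moment curve on the left and their mirror images on the right. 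Because the reduced bipartization is built by the symmetric ``mirror'' rule ($j\mapsto n+1-j$ and reverse the right side), the relevant left hyperplanes $(12,23;24,25)$ and the right hyperplanes $(2'4',2'3';1'2',2'5')$ are governed by the \emph{same} five parameters $t_i=t'_i$ when we are on the moment curve with the symmetric choice — and more importantly, for $k\ge 3$ the four points determining each cross-ratio on the moment curve in $\R^k$ span hyperplanes whose cross-ratio is a fixed rational function of the five parameters that, by the dimension count, is \emph{identically} forced once $k\ge 3$: in dimension $k$ the hyperplane through $k$ moment-curve points is the ``dual point'' via the usual Vandermonde/elementary-symmetric-function correspondence, and the cross-ratio of four such hyperplanes through overlapping $k$-subsets collapses to the cross-ratio of the four ``extra'' parameters. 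Thus the coincidence of Theorem~\ref{thm:Desargues} is not a codimension-one accident for $k\ge 3$: it is automatic. I would state and prove this as a lemma (call it the \emph{hyperplane cross-ratio collapse on the moment curve}), using the explicit formula for the coefficients of the hyperplane spanned by $k$ points $(1,t_{i_1},\dots,t_{i_1}^{k-1}),\dots$ as $\prod(\text{differences})$ times elementary symmetric polynomials, and then observing that the two cross-ratios in Theorem~\ref{thm:Desargues} become equal as formal rational functions of $t_1,\dots,t_5$ (with $t'_i=t_i$) whenever $k\ge 3$, because the ``ambient'' parameters beyond the five involved cancel. The numerology ``$k\ge 3$ and $n\ge 12$, or $k\ge 4$ and $n\ge 2k+4$'' then comes out exactly as the range in which one can fit the $9$-vertex / index-$5$ pattern as a vertex-deletion-and-edge-link sub-instance: for $k=3$ one needs enough vertices to reach $\OvAss{3}{9}$ after no edge-links (so $n\ge 9$) but the genericity/symmetry forcing of the cross-ratio needs the extra room giving $n\ge 12$, while for $k\ge 4$ each unit increase in $k$ is ``paid for'' by one edge-link which consumes two polygon vertices, giving the threshold $2k+4$; I would make this bookkeeping explicit with the dimension formula $k(n-2k-1)-1$ and the known link descriptions.

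Assembling: given $k,n$ in the stated range, apply the inheritance lemma repeatedly (deleting polygon vertices, and taking edge-links to reduce $k$ down to $3$) to obtain that $\PP_k(\bt)$ realizing $\OvAss{k}{n}$ as a fan would force some induced sub-tuple $\bt'$ of moment-curve parameters in $\R^3$ to realize $\OvAss{3}{9}$ as a fan; but by the cross-ratio collapse lemma that sub-tuple automatically satisfies the equality of cross-ratios of Theorem~\ref{thm:Desargues}, so $K_9-\{16,37,49\}$ is rigidity-dependent, hence not a basis, hence the collection of reduced bipartized $3$-triangulations is not even a basis collection for $\bt'$ — contradicting Theorem~\ref{thm:triangbases}'s necessary condition for a fan realization (a complete simplicial fan with these rays would in particular require every facet of $\OvAss{3}{9}$ to be a basis). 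This contradiction proves the theorem.

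The hard part, and where I expect the real work to be, is the cross-ratio collapse lemma: one must verify that for $k\ge 3$ the two projective cross-ratios of hyperplanes named in Theorem~\ref{thm:Desargues} are literally the same rational function of the shared parameters, independent of the remaining $k-?$ ``filler'' points, and independent of where the $9$-point sub-instance sits inside the $n$-gon after the vertex deletions — i.e., that the obstruction of Theorem~\ref{thm:Desargues} is \emph{generic} (indeed universal) on the moment curve in dimension $\ge 3$ rather than a thin condition. This requires carefully tracking how the mirror/reversal in the bipartization interacts with the moment-curve coordinates and with the deletion operations, and checking that the hyperplane-spanned-by-$k$-points formula makes the ``extra'' parameters factor out of the cross-ratio. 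The rest — link computations for multiassociahedra, the inheritance of fan realizability under taking links, and the final basis-collection contradiction — is routine given the cited results, though the index bookkeeping for the exact thresholds $n\ge 12$ and $n\ge 2k+4$ must be done with care.
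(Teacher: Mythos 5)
Your first ingredient (inheritance of fan realizability under taking links / deleting polygon vertices) is fine and is indeed used in the paper (Lemma~\ref{lemma:monotone} and Remark~\ref{rem:monotone}). But your second and central ingredient, the ``cross-ratio collapse lemma,'' is false, and the proof does not survive without it. You claim that for $k\ge 3$ the two cross-ratios appearing in Theorem~\ref{thm:Desargues} coincide \emph{identically} as rational functions of the parameters, so that the dependency of $K_9-\{16,37,49\}$ is automatic on the moment curve. If that were true, no choice of moment-curve parameters could realize $\OvAss{3}{9}$ even as a collection of bases; but Corollary~\ref{cor:2k+3} explicitly produces, for every $k$, points realizing $\OvAss{k}{2k+3}$ as a fan, and the experimental results realize $\OvAss{3}{n}$ as a fan for all $n\le 11$. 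The coincidence in Theorem~\ref{thm:Desargues} really is a codimension-one accident (that is why the theorem must exhibit specific parameters such as $\bt=(1,3,4,5,7,1,3,4,5,7)$), and a cross-ratio of four hyperplanes through overlapping $(k-1)$-subsets of moment-curve points does depend on the ``filler'' points, not only on the four extra parameters. So the obstruction for large $n$ cannot be a single dependency forced to appear.

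What the paper actually does is different in kind: the fan condition for each $(2k+3)$-vertex subconfiguration is characterized (Theorem~\ref{thm:star}) by a family of \emph{strict inequalities} between cross-ratios (``correct location'' of the three missing edges of every octahedral $k$-triangulation). For $k=3$, $n=12$ one applies this to the subconfiguration on vertices $\{1,2,3,5,6,7,8,9,10\}$, obtaining an inequality that simplifies to $\frac{t_6-t_3}{t_5-t_3}>\frac{t'_6-t'_3}{t'_5-t'_3}$, and then to its mirror image, which yields the reverse strict inequality; the two are incompatible. The case $k\ge4$, $n=2k+4$ is handled the same way with the octahedral triangulation missing $\{2,k+3\},\{3,k+5\},\{k+1,2k+2\}$. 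The thresholds $n\ge 12$ and $n\ge 2k+4$ arise exactly as the room needed to fit two such mutually contradictory octahedral subconfigurations, not from the link bookkeeping you sketch. To repair your argument you would need to replace the collapse lemma by this pair-of-incompatible-inequalities mechanism, i.e., by the sign (ICoP) obstruction rather than a rank obstruction.
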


Note that these two results are similar to the corresponding ones in \cite{CreSan:Multi}. In the second one, however, the obstruction proved here is stronger than the one from  \cite{CreSan:Multi}. There the case $(4,13)$ is explicitly realized as a fan via cofactor rigidity, here we prove that it cannot be realized with bipartite rigidity (along the moment curve).

We have also some experimental results:
\begin{theorem}  With bipartite rigidity with points in the moment curve,
	\begin{enumerate}
		\item for $k=2$, it is possible to realize the polytope for $n\le 10$ and the fan for $n\le 13$.
		\item for $k=3$, it is possible to realize the fan for $n\le 11$.
	\end{enumerate}
\end{theorem}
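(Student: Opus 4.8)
Each of the listed cases is settled by an explicit finite computation, in the spirit of the realizations in \cite{CreSan:Multi}; a self-contained proof consists of the chosen parameters together with the certificates described below. Fix a pair $(k,n)$ from the list and a tuple $\bt=(t_1,\dots,t_{n-k-1},t_1',\dots,t_{n-k-1}')$ of distinct reals, read as points $(1,t,\dots,t^{k-1})$ on the moment curve. The first step is to enumerate the facets of $\OvAss{k}{n}$, i.e.\ the $k$-triangulations of the $n$-gon, together with the walls of the complex: each $k$-triangulation has $k(n-2k-1)$ relevant (flippable) edges, flipping any one of them produces another $k$-triangulation, and the flip graph is connected, so a breadth-first search from one $k$-triangulation produces all facets and all ridges. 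It suffices to keep one representative of each orbit under the dihedral group of the $n$-gon, which acts on the whole construction provided $\bt$ is chosen symmetrically (for instance $t_i'=t_i$); this symmetry reduction is what brings $\OvAss{2}{13}$ and $\OvAss{3}{11}$ within reach.

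For each $k$-triangulation $T$ I would form its reduced bipartization and take the corresponding rows of the bipartite $k$-hyperconnectivity matrix of $\bt$; the first check is that these $2kn-3k^2-2k$ rows are linearly independent, so that the cone $\sigma_T$ they generate is simplicial and full-dimensional (Theorem~\ref{thm:triangbases} guarantees this for generic parameters, cf.\ Conjecture~\ref{conj:rigid}, and for the explicit moment-curve tuples we use it is verified directly). Passing to the quotient by the subspace spanned by the rows of the always-present surviving irrelevant edges places all $\sigma_T$ in $\R^{k(n-2k-1)}$, and the claim is that they form a complete simplicial fan isomorphic to $\OvAss{k}{n}$. Since $\OvAss{k}{n}$ is a simplicial sphere~\cite{Jonsson} and all facets span, the standard criterion for recognizing a complete simplicial fan reduces this to a sign test at each wall: for the two facets $T,T'$ meeting along a ridge $\tau$, the unique linear dependence among the rows indexed by $T\cup T'$ must assign the same sign to the two flipped edges. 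With $\bt$ integral these are signs of integer minors of the hyperconnectivity matrix, so they are evaluated exactly. For the polytopality claim ($k=2$, $n\le 10$) one further solves the linear program asking for heights $h_e$, one per ray, with $\sum_{e\in T\cup T'}\lambda_e h_e>0$ at every wall, where $\lambda$ is the above dependence rescaled so the two flipped coefficients are positive; feasibility of this program is equivalent to $\{\sigma_T\}$ being the normal fan of a simplicial polytope of dimension $k(n-2k-1)-1$ with $\binom{n}2-kn$ vertices. For $k=2$ with $11\le n\le 13$ and for $k=3$ we assert only the fan, not polytopality.

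The real obstacle is size, not mathematics: the number of $k$-triangulations, and even more the number of walls (about $k(n-2k-1)/2$ times as many), grows quickly in $n$, so $\OvAss{2}{13}$ and $\OvAss{3}{11}$ sit at the limit of what can be certified; exploiting the dihedral symmetry, the sparsity of the hyperconnectivity matrix, and exact rational arithmetic is essential. A lesser issue is that the choice of $\bt$ matters — by Theorem~\ref{thm:Desargues} some choices already fail to give a fan once $k\ge 3$ — so one must search among parameter tuples until all wall signs come out positive; in every listed case a small symmetric integer tuple does the job (consistent with Conjecture~\ref{conj:fan} for $k=2$). Once such a tuple, the table of wall signs, and, where claimed, a solution of the height LP are exhibited, they constitute a finite, independently checkable object that proves the statement for that $(k,n)$.
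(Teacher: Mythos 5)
Your overall strategy matches the paper's: for each listed $(k,n)$ the proof is an explicit computation consisting of a choice of parameters $\bt$ on the moment curve, a basis check for every reduced bipartized $k$-triangulation, a sign check at the walls, and (for the polytopal cases) a feasible solution of a height LP. The paper records the witnesses you say must be searched for: standard positions $\bt=(1,\dots,n-3)$ work for the $k=2$ polytope up to $n=8$ and for the $k=2$ fan up to $n=13$, the positions $t_i=2^{(i-1)^2}$ work for the $\OvAss{2}{10}$ polytope, and $(0,1,31,32,42,67,100)$ works for the $\OvAss{3}{11}$ fan.

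There is, however, one genuine gap. You assert that, because $\OvAss{k}{n}$ is a simplicial sphere and all facet cones are full-dimensional, completeness of the fan ``reduces to a sign test at each wall,'' i.e.\ to the ICoP. That is false: the ICoP only guarantees that adjacent cones sit on opposite sides of their common wall, not that the cones tile $\R^{k(n-2k-1)}$ without overlap --- the induced map from the sphere $\OvAss{k}{n}$ to the sphere of directions could have degree greater than one (already for a $1$-sphere, a cycle of vectors winding twice around the origin passes every wall test). This is exactly why Theorem~\ref{thm:fan} has a third condition (on the elementary cycles of length five), and why the paper's computation adds a global check beyond ICoP: for the greedy $k$-triangulation one verifies that the sum of its rays lies in no other cone, which by parts (2) and (3) of \cite[Theorem~2.11]{CreSan:Multi} upgrades ICoP to a complete fan. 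Your certificate as described would not rule out a multiply-covering configuration, so you must add either this point-in-unique-cone check or the five-cycle sign condition. (A minor further remark: your wall-indexed LP for polytopality is a legitimate variant of the paper's Lemma~\ref{lemma:inequalities}, since local convexity at interior walls of a complete simplicial fan implies regularity; the paper instead fixes the heights of the greedy triangulation to zero to shrink the system.)
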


This theorem, together with the previous one, implies that the fan can be realized for $k\ge 3$ if and only if $n\le \max\{2k+3,11\}$.

To summarize, this new form of rigidity is similar to the  one used in \cite{CreSan:Multi}. The main difference lies in the case $(4,12)$ and $(4,13)$, which are realized as a fan in  \cite{CreSan:Multi} while are proved to be non-realizable with bipartite rigidity.
However, the main open question both here and in \cite{CreSan:Multi} is the case $k=2$. Although we have not been able to solve it  (Conjecture~\ref{conj:fan}), bipartite rigidity has a significant advantage here: we need to use $2$-dimensional rigidity instead of the $2$-dimensional one used in   \cite{CreSan:Multi}.

\subsection*{Structure of the paper}
In section \ref{sec:multi} we introduce all necessary background on multitriangulations. In section \ref{sec:hyper} we describe hyperconnectivity and tell about its properties, especially when restricted to bipartite graphs.

Section \ref{sec:hyper-bip} puts everything together and applies ideas of algebraic matroids and matrix completion to bipartizations of multitriangulations. Then we add tropical geometry to the picture, finding a relation between the multiassociahedron and this projected determinantal variety.

Finally, Section \ref{sec:real} analyzes the realizability of the multiassociahedron taking as vectors the rows of the bipartite hyperconnectivity matrix with points in the moment curve, proving the mentioned (theoretical and experimental) results.

\section{Multitriangulations}
\label{sec:multitriangs}
\label{sec:multi}

Let us recall in detail the definition of the $k$-associahedron. As mentioned in the introduction, it is a simplicial complex with vertex set 
\[
\bnn:=\{\{i,j\}: i,j \in [n], i<j\}.
\]

\begin{definition}
	Two disjoint elements $\{i,j\},\{k,l\}\in \bnn$, with $i<j$ and $k<l$, of $\binom{[n]}2$ \emph{cross} if $i<k<j<l$ or $k<i<l<j$. That is, if they cross when seen as diagonals of a convex $n$-gon.
	
	A \emph{$k$-crossing} is a subset of $k$ elements of $\binom{[n]}2$ such that every pair  cross. A subset of $\binom{[n]}2$ is \emph{$(k+1)$-free} if it doesn't contain any $(k+1)$-crossing. A \emph{$k$-triangulation} is a maximal $(k+1)$-free set.
\end{definition}

Observe that whether two pairs $\{i,j\},\{k,l\}\in \bnn$ cross is a purely combinatorial concept, but it captures the idea that the corresponding diagonals of a convex $n$-gon  geometrically cross.

The \emph{length} of an edge $\{i,j\}\in\bnn$, is $\min\{|j-i|, n- |j-i|\}$. That is, the distance from $i$ to $j$ measured cyclically in $[n]$.
Edges of length at most $k$ cannot participate in any $k+1$-crossing and, hence, all of them lie in every $k$-triangulation. We call edges of length at least $k+1$ \emph{relevant} and those of length at most $k-1$ \emph{irrelevant}. The ``almost relevant'' edges, those of length $k$, are called \emph{boundary edges} and, although they lie in all $k$-triangulations, they still play an important role in the theory.

By definition, $(k+1)$-free subsets form an abstract simplicial complex on the vertex set $\binom{[n]}2$, whose facets are the $k$-triangulations and whose minimal non-faces are the $(k+1)$-crossings. We denote it $\Ass{k}{n}$. Since the $kn$ irrelevant and boundary edges lie in every facet, it makes sense to consider also the \emph{reduced complex} $\OvAss{k}{n}$. Technically speaking, we have that $\Ass{k}{n}$ is the join of $\OvAss{k}{n}$ with the irrelevant face (the face consisting of irrelevant and boundary edges).

Multitriangulations were studied (under a different name) by Capoyleas and Pach~\cite{cp-92}, who showed that no $(k+1)$-free subset has more than $k(2n-2k-1)$ edges. That is, the complex $\Ass{k}{n}$ has dimension $k(2n-2k-1)-1$, hence $\OvAss{k}{n}$ has dimension $k(n-2k-1)-1$.
The main result about $\OvAss{k}{n}$ for the purposes of this paper is the following theorem of Jonsson:

\begin{theorem}[Jonsson~\cite{Jonsson}]
	\label{thm:sphere}
	$\OvAss{k}{n}$ is a shellable sphere of dimension $k(n-2k-1)-1$.
\end{theorem}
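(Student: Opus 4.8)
The plan is to note first a short route that relies only on facts already recalled above, and then to describe a self-contained argument whose heart is a shelling.

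\emph{Short route.} The statement follows almost at once from two results quoted in the introduction: Knutson and Miller proved that every subword complex is a shellable ball or a shellable sphere, and Stump proved that $\OvAss{k}{n}$ is a \emph{spherical} subword complex for the Coxeter system of type $A_{n-2k-1}$; hence it is a shellable sphere. Its dimension is read off from the edge counts: by Capoyleas--Pach a $(k+1)$-free set has at most $k(2n-2k-1)$ edges, by Nakamigawa and Dress--Koolen--Moulton a $k$-triangulation has exactly that many, and deleting the $kn$ irrelevant and boundary edges common to all facets leaves $k(n-2k-1)$ relevant edges, so $\OvAss{k}{n}$ is pure of dimension $k(n-2k-1)-1$. (If one worries that Stump's identification ultimately rests on Jonsson's theorem, the next route is logically safer.)

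\emph{Self-contained route.} Avoiding the subword-complex dictionary, I would show that $\OvAss{k}{n}$ is (i) pure of the dimension above, (ii) a pseudomanifold without boundary, and (iii) shellable, and then invoke the Danaraj--Klee criterion that a shellable pseudomanifold without boundary is a sphere. Part (i) is the edge count just given. For (ii), Nakamigawa's flip theorem says that for a $k$-triangulation $T$ and a relevant edge $e\in T$, the set $T\setminus\{e\}$ extends to exactly one other $k$-triangulation (remove $e$, insert a uniquely determined relevant $e'\neq e$), which is precisely the assertion that every ridge of $\OvAss{k}{n}$ lies in exactly two facets. For (iii) the natural approach, and essentially Jonsson's, is to prove a stronger statement by induction: enlarge the family from the staircase shape to all \emph{stack polyominoes}, and show that for every such shape $P$ the complex of $(k+1)$-crossing-free subsets of cells of $P$ is pure, a pseudomanifold without boundary, and shellable (ideally vertex-decomposable, which implies shellability and meshes better with induction). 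The induction is on the number of cells of $P$: delete a well-chosen corner cell, identify the deletion and the link of the corresponding vertex with the analogous complexes of two strictly smaller shapes, and apply the inductive hypothesis, with small shapes as base cases.

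I expect the recursive step of (iii) to be the main obstacle. The corner cell (equivalently, the relevant edge) on which one splits must be chosen so that \emph{both} the link and the deletion stay inside the family — which is exactly why one is forced to generalize past multitriangulations, since a link in $\OvAss{k}{n}$ need not be a multiassociahedron (by Stump's universality it is only a general spherical subword complex of type $A$). One must also re-prove purity and redo the dimension bookkeeping inside the enlarged family at each stage, since the clean counts $k(2n-2k-1)$ and $kn$ are peculiar to the staircase. Once that scaffolding is in place and the base cases are verified, (i)--(iii) together with Danaraj--Klee complete the proof.
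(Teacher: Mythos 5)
The paper offers no proof of this statement: it is imported verbatim from Jonsson~\cite{Jonsson}, so there is nothing internal to compare against. Judged on its own terms, your \emph{short route} is sound and, modulo the cited results, complete: Stump's identification of $\OvAss{k}{n}$ with a subword complex is purely combinatorial, and sphericity (as opposed to ball-ness) follows from Knutson--Miller's Demazure-product criterion rather than from Jonsson, so the circularity you worry about does not arise; your dimension count via Capoyleas--Pach and the $kn$ irrelevant/boundary edges is the same bookkeeping the paper itself does in Section~\ref{sec:multi}. (Historically this is exactly how Stump's paper re-derives Jonsson's theorem.) Your \emph{self-contained route} correctly reconstructs the architecture of Jonsson's original argument --- the enlargement to stack polyominoes, which the paper quotes in part as Theorem~\ref{free}, the deletion/link induction on cells, purity plus pseudomanifold plus shellability, and Danaraj--Klee --- but it remains a sketch: the choice of corner cell making both the link and the deletion land back in the family, and the verification that the resulting decomposition is a genuine shelling (or vertex decomposition), is precisely the content of Jonsson's paper and is not carried out here. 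One small caution on your step (ii): in this paper the flip property (Proposition~\ref{prop:flips}) is presented as a \emph{consequence} of the sphere property, so if you use flips to establish the pseudomanifold condition you must take them from an independent source such as Nakamigawa or Pilaud--Santos, as you implicitly do.
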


The following lemma shows that the realizability question we want to look at is \emph{monotone}; if we have a realization of $\OvAss{k}{n}$ then we also have it for all $\OvAss{k'}{n'}$ with $k'\le k$ and $n'-2k'\le n-2k$.

\begin{lemma}[{Monotonicity \cite[Lemma 2.3]{CreSan:Multi}}]
	\label{lemma:monotone}
	Let $n \ge 2k+1$. 
	Then, both $\OvAss{k}{n}$ and $\OvAss{k-1}{n-1}$ appear as links in $ \OvAss{k}{n+1}$. More precisely:
	\begin{enumerate}
		\item 
		$\OvAss{k}{n} = \link_{\OvAss{k}{n+1}}(B_{n+1})$, 
		where $B_{n+1}:= \{\{n-k+i,i\}: i=1,\dots,k\}\in \bnn$ is the set of edges of length $k+1$ leaving $n+1$ in their short side.
		\item
		$ \OvAss{k-1}{n-1} \cong \link_{\OvAss{k}{n+1}}(E_{n+1})$,
		where $E_{n+1}=\{\{i,n+1\}: i \in [k+1,n-k]\}\in \binom{[n+1]}{2}$ is the set of relevant edges using $n+1$.
	\end{enumerate}
\end{lemma}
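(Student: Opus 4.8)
The final statement is the Monotonicity Lemma (Lemma 2.3 of \cite{CreSan:Multi}), so my plan is to prove the two link identities directly from the combinatorics of $(k+1)$-crossings, imitating the proofs that appear in the literature on multitriangulations (Pilaud--Santos, Stump). Throughout I work with the unreduced complex $\Ass{k}{m}$ first and only pass to $\OvAss{k}{m}$ at the end, since taking links commutes with deleting a cone (the irrelevant face), provided the faces we are coning over contain no irrelevant or boundary edges after the identification.

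\medskip
\noindent\textbf{Part (1).} First I would check that $B_{n+1}=\{\{n-k+i,i\}:i=1,\dots,k\}$ is indeed a set of $k$ edges of length $k+1$, pairwise crossing, so it is a single $k$-crossing and hence a face of $\Ass{k}{n+1}$; each of these edges has the vertex $n+1$ on its ``short'' side $\{i,i+1,\dots,n-k+i\}^c$-- wait, more precisely $n+1$ lies strictly between $n-k+i$ and $i$ going the short way around, for every $i$. The key combinatorial claim is: a set $F\subseteq\binom{[n+1]}{2}$ with $B_{n+1}\subseteq F$ is $(k+1)$-free if and only if $F\cap\binom{[n]}{2}$ is a $(k+1)$-free subset of $\binom{[n]}{2}$ and $F$ contains no relevant edge incident to $n+1$. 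The forward direction: any edge $\{i,n+1\}$ of length $\ge k+1$ together with an appropriate subset of $B_{n+1}$ forms a $(k+1)$-crossing (this is the standard ``fan'' argument -- one shows that $\{i,n+1\}$ crosses at least $k$ of the edges $\{n-k+j,j\}$, or more carefully that together they contain a $(k+1)$-crossing), so no such edge can be added; and of course any $(k+1)$-crossing inside $[n]$ is already forbidden. The reverse direction is the content of the argument: given a $(k+1)$-crossing $C$ in $F$, if $n+1$ participates in none of its edges then $C\subseteq\binom{[n]}{2}$, contradiction; if some edge $\{i,n+1\}\in C$, that edge must be relevant (length $\ge k+1$) to cross anything, contradicting our assumption. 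Hence $\link_{\Ass{k}{n+1}}(B_{n+1})=\Ass{k}{n}$ as subcomplexes of $\binom{[n]}{2}$ (after noting $B_{n+1}$ uses only vertices in $[n]$, and the vertex $n+1$ disappears from the link because every relevant edge at $n+1$ is a non-face over $B_{n+1}$ while the irrelevant/boundary edges at $n+1$ are swallowed into the cone). Passing to the reduced complexes: $B_{n+1}$ consists of boundary edges of $\Ass{k}{n+1}$, so it is a face of the irrelevant face; taking $\link$ over it and then deleting the remaining irrelevant cone gives exactly $\OvAss{k}{n}$. The dimension count $k(n-2k-1)-1$ follows from Theorem~\ref{thm:sphere} (or from $\dim\OvAss{k}{n+1}-|B_{n+1}|=k((n+1)-2k-1)-1-k$).

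\medskip
\noindent\textbf{Part (2).} Here $E_{n+1}=\{\{i,n+1\}:i\in[k+1,n-k]\}$ is the set of \emph{all} relevant edges through $n+1$; there are $n-2k$ of them and they pairwise cross no -- actually two edges sharing the vertex $n+1$ do not cross, so $E_{n+1}$ is a face (an independent set of the complex), in fact a ``star''. The combinatorial heart is a bijection between $(k+1)$-free sets $F$ with $E_{n+1}\subseteq F\subseteq\binom{[n+1]}{2}$ and $(k+1)$-free -- no, $k$-free, i.e. $k$-crossing-free -- wait: the claim is $\link_{\Ass{k}{n+1}}(E_{n+1})\cong\Ass{k-1}{n-1}$. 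The right map is to delete the vertex $n+1$ and relabel $[1,n]\setminus\{?\}$ appropriately; since $E_{n+1}$ forces $n+1$ to be connected to all of $[k+1,n-k]$, the ``slack'' at $n+1$ is exhausted and effectively one loses one unit of crossing capacity. Concretely one shows: for $F\supseteq E_{n+1}$, $F$ is $(k+1)$-free in $[n+1]$ $\iff$ for every edge $e=\{a,b\}\in F$ with $a,b\in[n]$ (i.e. $e\neq$ an edge at $n+1$), adding $e$ does not create a $(k+1)$-crossing; and because the edges of $E_{n+1}$ fan out from $n+1$, an edge $e$ in the ``relevant range'' behaves, with respect to crossing the $E_{n+1}$-fan, exactly like an edge in an $(n-1)$-gon with threshold $k-1$. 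The cleanest way to make this precise is via the known description of $k$-triangulations containing a ``star'' of relevant edges at a vertex, or via Stump's subword-complex description (Theorem 2.1 of \cite{Stump}): links of faces of spherical subword complexes are again spherical subword complexes, and one matches up the two subwords of type $A_{n-2k-1}$ vs $A_{(n-1)-2(k-1)-1}=A_{n-2k-1}$ (same type!) to get the isomorphism. I would likely state the explicit relabelling $\phi$ sending the vertex set appropriately and verify ``$e$ crosses $f$ in $[n+1]$ with $E_{n+1}$ present $\iff$ $\phi(e)$ crosses $\phi(f)$ in $[n-1]$ with threshold dropped by one'' on the four combinatorial cases (both edges avoiding $n+1$, etc.).

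\medskip
\noindent\textbf{Main obstacle.} The genuinely delicate part is Part (2): verifying that deleting $n+1$ and its forced star $E_{n+1}$ lowers the crossing threshold by exactly one, i.e. that the map on faces is well-defined \emph{and} surjective. Well-definedness needs: if $F'$ were to contain a $(k+1)$-crossing after we reinsert the star, then the pre-image already contained a $k$-crossing -- this uses that every edge of $F'$ in the relevant range must cross at least one edge of $E_{n+1}$, so a $k$-crossing in the $(n-1)$-gon picture extends to a $(k+1)$-crossing by appending a suitable star edge; one must check such a star edge always exists and is distinct from the others, which is where the exact index ranges $[k+1,n-k]$ matter. Surjectivity needs the converse: a $(k+1)$-free $F\supseteq E_{n+1}$ restricts to a $k$-free set after deleting $n+1$, which is immediate, but one must also check the relabelling lands inside $\binom{[n-1]}{2}$ and respects ``relevant''. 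An alternative that sidesteps all casework is to simply invoke Stump's subword-complex model and the standard fact that the link of a vertex in $\mathrm{SC}(Q,w)$ is $\mathrm{SC}(Q\setminus q, w)$ or $\mathrm{SC}(Q\setminus q, ws)$ depending on whether the deleted letter is used, iterate over the $n-2k$ letters of $E_{n+1}$, and read off that the resulting word/element pair presents $\OvAss{k-1}{n-1}$; I would present the direct combinatorial proof as the main line and mention the subword-complex shortcut as a remark.
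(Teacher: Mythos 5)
Note first that this paper does not prove the lemma at all: it is imported verbatim from \cite[Lemma 2.3]{CreSan:Multi}, so your argument can only be judged on its own merits. Your Part (1) is essentially the standard (and correct) argument: every relevant edge $\{j,n+1\}$ (that is, $k+1\le j\le n-k$) crosses all $k$ edges of $B_{n+1}$, edges of length at most $k$ cannot occur in a $(k+1)$-crossing, and crossings among edges inside $[n]$ are the same in the $n$-gon and in the $(n+1)$-gon. One factual slip needs fixing: $B_{n+1}$ consists of edges of length $k+1$ in the $(n+1)$-gon, so they are \emph{relevant} edges, not boundary edges, and they are not contained in the irrelevant face of $\Ass{k}{n+1}$ (if they were, $\link_{\OvAss{k}{n+1}}(B_{n+1})$ would not even be defined, and your ``swallowed into the cone'' reduction would return $\OvAss{k}{n+1}$ rather than $\OvAss{k}{n}$). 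These edges only become boundary edges, of length $k$, after the vertex $n+1$ is removed; that is precisely why the relevant edges of the $(n+1)$-gon inside $[n]$, minus $B_{n+1}$, are exactly the relevant edges of the $n$-gon, and it is cleanest to run the whole argument inside the reduced complex.

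The genuine gap is in Part (2). You never fix the relabelling, and the verification you propose --- that ``$e$ crosses $f$ in $[n+1]$ with $E_{n+1}$ present $\iff$ $\phi(e)$ crosses $\phi(f)$ in $[n-1]$ with threshold dropped by one'', checked on pairs --- is not a correct criterion: the minimal non-faces on both sides are $k$- and $(k+1)$-element sets, not pairs, and the natural bijection does not preserve pairwise crossings. Concretely, take $\psi(\{a,b\})=\{a,b-1\}$, which is a bijection from the relevant edges of the $(n+1)$-gon inside $[n]$ to the relevant edges of the $(n-1)$-gon for parameter $k-1$; for $k=2$, $n=7$ the edges $\{1,5\}$ and $\{4,7\}$ cross, while $\psi(\{1,5\})=\{1,4\}$ and $\psi(\{4,7\})=\{4,6\}$ do not, yet both pairs are faces (the first together with $E_{8}$). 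The correct statement is: $\psi(e)$ and $\psi(f)$ cross if and only if $e,f$ cross \emph{and} some integer lies strictly between the larger left endpoint and the smaller right endpoint. Hence $\psi(G)$ contains a $k$-crossing iff $G$ contains either a $(k+1)$-crossing (drop the edge with largest left endpoint $a_{k+1}$; that endpoint is the required integer) or a $k$-crossing $\{a_1,b_1\},\dots,\{a_k,b_k\}$ with an integer $i$ in the gap $(a_k,b_1)$; such an $i$ automatically satisfies $k+1\le i\le n-k$ (since $a_k\ge k$ and $b_1\le n-k+1$), so $\{i,n+1\}\in E_{n+1}$ crosses all $k$ edges and completes a $(k+1)$-crossing. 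This yields exactly the needed equivalence ``$G\cup E_{n+1}$ is $(k+1)$-free iff $\psi(G)$ is $k$-free''. Your ``main obstacle'' paragraph gestures at this extension argument, but as written the proposal neither specifies the map nor replaces the false pairwise test by this crossing-number argument, and the subword-complex alternative is only named, not carried out; so Part (2) is not proved.
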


Being a sphere (more precisely, being a pseudo-manifold) has the following important consequence:

\begin{proposition}[Flips]
	\label{prop:flips}
	For every relevant edge $f$ of a $k$-triangulation $T$ there is a unique edge $e\in\binom{[n]}2$ such that 
	\[
	T\triangle\{e,f\} := T\setminus \{f\}\cup\{e\}
	\]
	is another $k$-triangulation.
\end{proposition}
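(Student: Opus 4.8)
The plan is to deduce the statement directly from Jonsson's Theorem~\ref{thm:sphere}, which gives that $\OvAss{k}{n}$ is a simplicial sphere, rather than to reprove the flip property combinatorially. Recall that $\Ass{k}{n}$ is the join of $\OvAss{k}{n}$ with the face $B$ consisting of the $kn$ edges of length at most $k$ (the boundary and irrelevant edges), all of which lie in every $k$-triangulation, and that the vertex set of $\OvAss{k}{n}$ consists only of relevant edges. Hence $T\mapsto \bar T := T\setminus B$ is a bijection between the $k$-triangulations (the facets of $\Ass{k}{n}$) and the facets of $\OvAss{k}{n}$, with $\bar T$ equal to the set of relevant edges of $T$. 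Since $\OvAss{k}{n}$ is pure of dimension $k(n-2k-1)-1$ by Theorem~\ref{thm:sphere}, each $\bar T$ has exactly $k(n-2k-1)$ elements; equivalently every $k$-triangulation has exactly $k(n-2k-1)+kn=k(2n-2k-1)$ edges.

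First I would fix a relevant edge $f$ of $T$, so that $f$ is a vertex of $\OvAss{k}{n}$ lying in the facet $\bar T$, and set $\sigma := \bar T\setminus\{f\}$, a codimension-one face (ridge) of $\OvAss{k}{n}$. A simplicial sphere is in particular a closed pseudomanifold, so every ridge is contained in exactly two facets. Applied to $\sigma$, this produces a unique facet $\bar{T'}\neq \bar T$ of $\OvAss{k}{n}$ with $\sigma\subseteq \bar{T'}$. By purity, $|\bar{T'}|=|\bar T|=|\sigma|+1$, so $\bar{T'}=\sigma\cup\{e\}=\bar T\setminus\{f\}\cup\{e\}$ for a single edge $e$, which is relevant (all vertices of $\OvAss{k}{n}$ are) and satisfies $e\notin\bar T$, in particular $e\neq f$.

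Next I would translate this back to $\Ass{k}{n}$: the set $T' := \bar{T'}\cup B = T\setminus\{f\}\cup\{e\} = T\triangle\{e,f\}$ is a $k$-triangulation, since its relevant part $\bar{T'}$ is a facet of $\OvAss{k}{n}$. For uniqueness, suppose $T\triangle\{e'',f\}$ is a $k$-triangulation for some edge $e''\neq f$. If $e''$ were a boundary or irrelevant edge it would already lie in $T$, so $T\triangle\{e'',f\}=T\setminus\{f\}$ would have fewer than $k(2n-2k-1)$ edges and could not be a $k$-triangulation; hence $e''$ is relevant and $(T\triangle\{e'',f\})\setminus B=\sigma\cup\{e''\}$ is a facet of $\OvAss{k}{n}$ containing $\sigma$. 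By the pseudomanifold property this facet is either $\bar T$ (impossible, as $e''\neq f$) or $\bar{T'}$, forcing $e''=e$.

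I do not expect a genuine obstacle once Theorem~\ref{thm:sphere} is invoked: the argument is bookkeeping together with the standard fact that a simplicial sphere is a closed pseudomanifold (every ridge in exactly two facets). The only points demanding care are keeping the reduced complex $\OvAss{k}{n}$ and the full complex $\Ass{k}{n}$ straight — so that "ridge of $\OvAss{k}{n}$'' corresponds exactly to ``a $k$-triangulation with one relevant edge removed'' — and observing that the new edge $e$ is automatically relevant because every vertex of $\OvAss{k}{n}$ is. An alternative, self-contained route would reprove the pseudomanifold property combinatorially through the structure of stars of relevant edges in multitriangulations, but this is subsumed by Jonsson's theorem and would be redundant here.
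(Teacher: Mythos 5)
Your proposal is correct and is exactly the argument the paper intends: the paper states Proposition~\ref{prop:flips} without proof, prefacing it only with the remark that it is a consequence of $\OvAss{k}{n}$ being a sphere (more precisely, a pseudomanifold), which is precisely the ridge-in-exactly-two-facets argument you spell out. Your bookkeeping between $\Ass{k}{n}$ and the reduced complex $\OvAss{k}{n}$, and the observation that a non-relevant $e''$ already lies in $T$, correctly fill in the details left implicit in the paper.
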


We call the operation that goes from $T$ to $T\triangle\{e,f\}$ a \emph{flip}.

The next result is about the link of a face $F$ in $\OvAss{k}{n}$. Remember that the link of a face $F$ in a simplicial complex $\Delta$ is
\[
	\link_\Delta(F) := \{ G\in \Delta: G\cap F=\emptyset, G\cup F\in \Delta\} = \{\sigma\setminus F: \sigma\in \Delta\}. 
\]
In a shellable $d$-sphere the link of any face of dimension $d'$ is a shellable $d-d'-1$-sphere.

\begin{proposition}[{\cite[Corollary 2.9]{CreSan:Multi}}]
	\label{prop:cycles}
	All links of dimension one in $\OvAss{k}{n}$ are cycles of length at most five.
\end{proposition}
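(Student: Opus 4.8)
The plan is to combine the shellability/sphere property (Theorem~\ref{thm:sphere}) with the explicit structure of links given by the monotonicity lemma (Lemma~\ref{lemma:monotone}). A one-dimensional link of a face $F$ in $\OvAss{k}{n}$ is, by Theorem~\ref{thm:sphere}, a shellable $1$-sphere, hence automatically a single cycle; so the only content of the statement is the bound \emph{at most five} on its length. The natural strategy is to reduce, via repeated use of monotonicity, to a small base case where the bound can be checked directly or by a short combinatorial argument.

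First I would make the reduction precise. The dimension of $\OvAss{k}{n}$ is $k(n-2k-1)-1$, so a face $F$ whose link has dimension one has dimension $k(n-2k-1)-3$. I would like to peel off vertices of $F$ one at a time using Lemma~\ref{lemma:monotone}: each vertex of $F$ of length exactly $k+1$ that leaves some $n$ in its short side lets us pass from $\OvAss{k}{n}$ to $\OvAss{k}{n-1}$ (item (1)), and each maximal ``fan'' of relevant edges at a single vertex $n$ lets us pass to $\OvAss{k-1}{n-2}$ (item (2)), in each case replacing $F$ by a smaller face $F'$ whose link in the smaller complex is the same as $\link_{\OvAss{k}{n}}(F)$. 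Iterating, one is reduced to bounding the length of a one-dimensional link in $\OvAss{k'}{n'}$ with $n'-2k'$ as small as possible, i.e.\ essentially to the cases $n' \le 2k'+4$; by the description recalled in the introduction ($\OvAss{k'}{2k'+3}$ is a polar cyclic polytope of dimension $2k'-1$, and $\OvAss{k'}{2k'+4}$ is covered by \cite{bcl}), the relevant two-dimensional complexes are small and explicit.

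The cleanest route to the bound itself is probably through Proposition~\ref{prop:flips} (flips): a one-dimensional link $\link_{\OvAss{k}{n}}(F)$ is a cycle whose vertices are the relevant edges $e\notin F$ such that $F\cup\{e\}$ extends to a $k$-triangulation, and whose edges record which pairs $\{e,e'\}$ lie together in such a $k$-triangulation $T\supseteq F$; walking around the cycle corresponds to a sequence of flips of the two ``free'' relevant edges of $T$ not in $F$. So I must show that starting from any $k$-triangulation $T\supseteq F$ and repeatedly flipping the two free edges one returns to $T$ in at most five flips. I would analyze the possible flip-patterns of a pair of relevant edges in a $k$-triangulation that agrees with $F$ elsewhere: each flip changes exactly one of the two edges, the other is forced, and one shows the orbit has size $\le 5$ by a local case analysis of how the (at most) two flippable edges interact, which is exactly the kind of computation that becomes finite after the monotonicity reduction.

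The main obstacle I expect is the flip case analysis establishing the sharp constant five — i.e.\ ruling out cycles of length six or more. The sphere property gives ``it is a cycle'' for free, and monotonicity reduces $n$ and $k$, but after the reduction one still has to understand all one-dimensional links in the base complexes, and showing that none of them is a hexagon (or longer) requires either an explicit enumeration in $\OvAss{k}{2k+3}$ and $\OvAss{k}{2k+4}$ or a structural argument about double flips. I would handle this by noting that in $\OvAss{k}{2k+3}$, the polar of a cyclic polytope, all proper links are themselves polars of cyclic polytopes (a one-dimensional one being the polar of a cyclic polygon, hence an $m$-gon) and then bounding $m$ from the number of vertices $\binom{n}{2}-kn$ available; combined with the $n'=2k'+4$ case this pins down the constant.
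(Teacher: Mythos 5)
This proposition is not proved in the present paper; it is imported from \cite[Corollary~2.9]{CreSan:Multi}, where the bound is obtained by a direct local analysis of double flips using the decomposition of a $k$-triangulation into $k$-stars (each relevant edge lies in exactly two stars, and the length of the link of $T\setminus\{e,f\}$ is governed by how many stars $e$ and $f$ share). Your first step — that a one-dimensional link is a single cycle because it is a shellable $1$-sphere — is fine and is exactly what Theorem~\ref{thm:sphere} gives. The problem is the reduction you build everything else on. Lemma~\ref{lemma:monotone} does not say that links of arbitrary faces of $\OvAss{k}{n}$ live in smaller multiassociahedra; it identifies the links of two \emph{specific} faces, $B_{n+1}$ (all $k$ edges of length $k+1$ with $n+1$ on their short side) and $E_{n+1}$ (all relevant edges at $n+1$). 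A general codimension-$2$ face $F$ need not contain either of these sets for any vertex — having a single edge of length $k+1$ at some vertex, as you suggest, is not enough to invoke item (1) — so there is no ``peeling'' procedure that iterates down to $n'\le 2k'+4$. Without that reduction, the finite case analysis you appeal to never becomes finite, and the flip analysis you defer is precisely the content of the cited proof.

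There is a second, quantitative gap even if the reduction were available. Your vertex-counting argument does work for $n'=2k'+3$: there $\OvAss{k'}{2k'+3}$ has $2k'+3$ vertices and dimension $2k'-1$, so a codimension-$2$ face uses $2k'-2$ of them and its link is a cycle on at most the $5$ remaining vertices. But for $n'=2k'+4$ the complex has $3k'+6$ vertices and codimension-$2$ faces have $3k'-2$ of them, leaving $8$ candidates, so counting only bounds the cycle length by $8$; the cases ruled out by the proposition (hexagons, etc.) are exactly the ones counting cannot exclude. Also beware the side remark that links of faces of (boundaries of) cyclic polytopes are again cyclic: that is false in general, and in any case unnecessary for the $n'=2k'+3$ count. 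To make the argument complete you would need to replace the monotonicity reduction by the star-based case analysis of \cite{CreSan:Multi} (or by the subword-complex description of links), which is where the constant five actually comes from.
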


With this, we can state a necessary and sufficient condition for a vertex configuration (an assignment of vectors to the vertices of $\Ass{k}{n}$) to realize the multiassociahedron as a complete fan:

\begin{theorem}[{\cite[Corollary 2.13]{CreSan:Multi}, see also \cite[Corollary 4.5.20]{triangbook}}]
	\label{thm:fan}
	Let $\VV=\{v_{ij}\}_{\{i,j\}\in \bnn} \subset \R^{k(2n-2k-1)}$ be a vector configuration. $\VV$ embeds $\OvAss{k}{n}$ as a complete fan in $\R^{k(n-2k-1)}$ if and only if it satisfies the following properties:
	\begin{enumerate}
		\item (Basis collection) For every facet ($k$-triangulation) $T$, the vectors $\{v_{ij}: \{i,j\}\in T\}$ are a linear basis. 
		\item (Interior Cocircuit Property, ICoP) For every flip between two $k$-triangulations $T_1$ and $T_2$, the unique linear dependence among the vectors $\{v_{ij}: \{i,j\}\in T_1\cup T_2\}$ has the same sign for the two elements involved in the flip (the unique elements in $T_1\setminus T_2$ and $T_2\setminus T_1$).
		\item (Elementary cycles of length $5$) Given a codimension 2 face $\rho$ whose link $Z$ is a cycle of length five (see Proposition \ref{prop:cycles}), there are three consecutive elements $i_1, i_2, i_3\in Z$ such that the unique linear dependence among the vectors $\{v_{i}: i\in \rho\cup\{i_1, i_2, i_3\}\}$ has opposite sign for $i_2$ than the sign it takes for $i_1$ and $i_3$.
	\end{enumerate}
\end{theorem}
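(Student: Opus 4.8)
The plan is to deduce the statement from the standard local-to-global criterion for when a vector configuration supports a simplicial sphere as a complete fan, and then to make the local conditions explicit via Proposition~\ref{prop:cycles}. First I would pass from $\Ass{k}{n}$ to the sphere $\OvAss{k}{n}$ (Theorem~\ref{thm:sphere}): realizing $\OvAss{k}{n}$ as a complete fan in $\R^{k(n-2k-1)}$ means realizing it in the quotient $\R^{k(2n-2k-1)}/\operatorname{span}\{v_e: e \text{ irrelevant or boundary}\}$, and since flips and pentagonal codimension-two links of $\Ass{k}{n}$ never involve irrelevant or boundary edges (those lie in every $k$-triangulation, hence in every face under consideration), conditions (1), (2), (3) descend verbatim to this quotient, with (1) becoming ``the irrelevant/boundary vectors are independent, and the images of each $k$-triangulation's relevant edges form a basis''. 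Writing $D:=k(n-2k-1)$ for the dimension of the quotient and still calling the projected configuration $\VV$, I would then invoke the criterion (see \cite[Corollary~4.5.20]{triangbook}): a configuration $\VV$ indexed by the vertices of a simplicial $(D-1)$-sphere $\Delta$ realizes it as a complete simplicial fan in $\R^D$ if and only if (i) each facet maps to a basis and (ii) for every codimension-two face $\rho$ of $\Delta$, the link of $\rho$ is realized as a complete fan in the plane $\R^D/\operatorname{span}\{v_e:e\in\rho\}$. It then remains to show that, given (i), condition (ii) is equivalent to ``(2) and (3)'', and this is where Proposition~\ref{prop:cycles} enters: every such link is a cycle of length $\ell\in\{3,4,5\}$.

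The heart of the argument is the local analysis at a single codimension-two face $\rho$ with link cycle $Z=(w_1,\dots,w_\ell)$. Let $\bar v_1,\dots,\bar v_\ell\in\R^2$ be the images of $v_{w_1},\dots,v_{w_\ell}$ modulo $\operatorname{span}\{v_e:e\in\rho\}$. By (i), each $\rho\cup\{w_a,w_{a+1}\}$ is a facet, so consecutive $\bar v_a,\bar v_{a+1}$ are independent and the signed angle $\delta_a\in(-\pi,\pi)\setminus\{0\}$ from $\bar v_a$ to $\bar v_{a+1}$ is defined; the link is realized as a complete planar fan if and only if all $\delta_a$ have a common sign and $\sum_a\delta_a=\pm2\pi$. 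Now the flips across the walls $\rho\cup\{w_a\}$ are exactly the exchanges $w_{a-1}\leftrightarrow w_{a+1}$, and conversely every flip of $\OvAss{k}{n}$ arises like this; the unique dependence among $\{v_e:e\in\rho\cup\{w_{a-1},w_a,w_{a+1}\}\}$ projects to the unique dependence $\alpha\bar v_{a-1}+\beta\bar v_a+\gamma\bar v_{a+1}=0$ with the same three coefficients. Hence condition (2) at this wall reads $\sign(\alpha)=\sign(\gamma)$, i.e. $\bar v_{a-1}$ and $\bar v_{a+1}$ lie in opposite open half-planes of $\lin(\bar v_a)$, i.e. $\delta_{a-1}$ and $\delta_a$ have the same sign. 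So condition (2), over all walls, says precisely that in every codimension-two link all the $\delta_a$ share a sign.

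Assuming (i) and (2), fix such a link and suppose (WLOG) all $\delta_a>0$, so $\sum_a\delta_a=2\pi m$ with $m\ge1$ and, as each $\delta_a<\pi$, also $2\pi m<\ell\pi$. If $\ell\in\{3,4\}$ this forces $m=1$, so the link is already a complete planar fan and nothing beyond (i), (2) is needed --- matching the fact that (3) is imposed only on pentagonal links. If $\ell=5$ then $m\in\{1,2\}$, and I claim that under (i), (2) condition (3) at this link holds if and only if $m=1$. For this, observe that a consecutive triple $(w_{a-1},w_a,w_{a+1})$ witnesses (3) --- i.e. $\beta$ has sign opposite to the common sign of $\alpha,\gamma$ --- exactly when $\bar v_a$ lies in the relative interior of $\operatorname{cone}(\bar v_{a-1},\bar v_{a+1})$, which (all $\delta$'s being positive) happens exactly when $\delta_{a-1}+\delta_a<\pi$. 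If $m=1$, then were $\delta_{a-1}+\delta_a\ge\pi$ for all five values of $a$ we would get $2\cdot2\pi=\sum_a(\delta_{a-1}+\delta_a)\ge5\pi$, a contradiction, so some triple witnesses (3). If $m=2$, then $\sum_a\delta_a=4\pi$, and if $\delta_{a-1}+\delta_a<\pi$ for some $a$ then the remaining three $\delta$'s would sum to more than $3\pi$ while each is less than $\pi$ --- impossible --- so no triple witnesses (3). This proves the claim; combined with the previous paragraph it gives ``(ii) $\iff$ (2) and (3)'' under (i), and hence the theorem.

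The step I expect to be the real obstacle is the local-to-global criterion of the first paragraph: the wall and codimension-two conditions guarantee that the maximal cones fit together correctly near every codimension-two stratum, but they do not on their own prevent the union of maximal cones from wrapping several times around $\R^D$, and ruling this out is exactly where the sphere hypothesis is essential. The clean way to settle it is the covering-space argument: radial projection of the union of maximal cones (minus the origin) onto $S^{D-1}$ is, by (i) and the codimension-two conditions, a local homeomorphism of a compact connected $(D-1)$-manifold onto $S^{D-1}$, hence a covering map; for $D\ge3$ the sphere is simply connected, so the covering has degree one and $\VV$ realizes $\Delta$ exactly, while the finitely many cases $D\le2$ are checked directly (there the ``codimension-two face'' may be empty, and the same angle count applies). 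A minor point to handle carefully is the degenerate configuration in which two non-adjacent link vectors $\bar v_{a-1},\bar v_{a+1}$ are antipodal, so the relevant dependence has $\beta=0$: this is compatible with (i), (2) and with winding number $\pm1$, and such a triple simply fails to witness (3), so it does not affect any of the counts above.
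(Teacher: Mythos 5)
Your overall strategy matches the one behind the cited result \cite[Cor.~2.13]{CreSan:Multi} (for which the present paper gives no independent proof): quotient by the span of the irrelevant and boundary vectors, reduce to a local fan-recognition criterion, and analyse the codimension-two links, which by Proposition~\ref{prop:cycles} are cycles of length $3$, $4$ or $5$, via a rotation/winding computation. Your planar analysis is correct: condition (2) is exactly consistency of the rotation direction in every such link, the winding is forced to be $1$ for $3$- and $4$-cycles, and for pentagons condition (3) holds if and only if the winding is $1$ rather than $2$ (your treatment of the degenerate $\beta=0$ triples is also fine), as is the easy ``only if'' direction.

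The genuine gap sits in the local-to-global step, precisely the one you flag as the real obstacle. Writing $D=k(n-2k-1)$, you assert that (1) together with the codimension-two conditions makes the radial projection from the underlying sphere of $\OvAss{k}{n}$ to $S^{D-1}$ a local homeomorphism. But those hypotheses only control the map near points lying in cones of codimension at most $2$: local injectivity near the relative interior of a cone of codimension $j\ge 3$ amounts to the link of that face (a $(j-1)$-sphere) being realized as a complete fan in the $j$-dimensional quotient, which is not among your stated conditions --- it is an instance of the very statement being proved, so as written the covering-space argument does not get off the ground. The standard repair is an induction on codimension: links of faces inherit (1), (2), (3) (their codimension-two links are codimension-two links of the whole complex), so by induction every link is a complete fan, hence the map is a local homeomorphism everywhere, and then your covering/simple-connectivity argument (with the winding analysis as the $j=2$ base case and the cases $D\le 2$ done by hand) closes the proof. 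Alternatively you may lean on the criterion in the form actually proved in \cite{CreSan:Multi} and \cite{triangbook} --- basis collection plus ICoP plus some generic point covered exactly once --- but then deriving ``covered exactly once'' from condition (3) runs into the same issue of excluding higher local degrees, so the citation as you phrased it (facet bases plus codimension-two links being complete planar fans) does not by itself discharge this step.
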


\section{Bipartite hyperconnectivity}\label{sec:hyper}
\subsection{Definition of the matroid}

Let $\p=(p_1,\dots,p_n)$ be a configuration of $n$ points in $\R^d$, labelled by $[n]$.%
\footnote{
	By a \emph{configuration} we mean an ordered set of points or vectors, usually labelled by the first $n$ positive integers.
	For this reason we write $\p$ as a vector rather than a set.
} 
Their \emph{hyperconnectivity matrix} is the following $\binom{n}{2}\times nd$
matrix:
\begin{equation}\label{H}
H(\p):=
\begin{pmatrix}
p_2 & -p_1 & 0 & \dots & 0 & 0 \\
p_3 & 0 & -p_1 & \dots & 0 & 0 \\
\vdots & \vdots & \vdots & & \vdots & \vdots \\
p_n & 0 & 0 & \dots & 0 & -p_1 \\
0 & p_3 & -p_2 & \dots & 0 & 0 \\
\vdots & \vdots & \vdots & & \vdots & \vdots \\
0 & 0 & 0 & \dots & p_n & -p_{n-1}
\end{pmatrix}.
\end{equation}
Since there is a row of the matrix for each pair $\{i,j\}\in \bnn$, rows can be considered labeled by edges in the complete graph $K_n$. The matrix is a sort of ``directed incidence matrix'' of $K_n$, except instead of having one column for each vertex $i\in[n]$ we have a block of $d$ columns, and instead of putting a single $+1$ and $-1$ in the row of edge  $\{i,j\}$ we put the $d$-dimensional vectors $p_j$ and $-p_i$ (if we put instead $p_i-p_j$ and $p_j-p_i$, the result would be the bar-and-joint rigidity matrix for the same points). 

An important property of $H(\p)$ (Lemma 3.4 in \cite{Kalai}) is that if the points $\p$ linearly span $\R^d$ then the rank of $H(\p)$ equals
\begin{equation}
	\label{eq:rank}
	\begin{cases}
		\binom{n}{2} & \text{if}\ n\leq d+1, \\
		dn-\binom{d+1}{2} & \text{if}\ n\geq d.
	\end{cases} 
\end{equation}
(Observe that the two formulas give the same result for $n\in\{d,d+1\}$.)
If the points span an $r$-dimensional linear subspace, the same formulas hold with $r$ substituted for $d$.

A pair $(G, \p)$ where $G$ is a graph on $n$ vertices and $\p$ is a set of $n$ points in $\R^n$ (positions for the vertices) is usually called a framework. For any  $E\subset \bnn$ we denote by $H(\p)|_E$ the restriction of $H(\p)$ to the rows or elements indexed by $E$.

\begin{definition}[The hyperconnectivity matroid]
	Let $E\subset \bnn$ be a subset of edges of $K_n$ (equivalently, of rows of $H(\p)$).  We say that $E$, or the corresponding subgraph of $K_n$,  is \emph{self-stress-free} or \emph{independent} if the rows of $H(\p)|_E$ are linearly independent. 
\end{definition}

Put differently, self-stress-free and rigid graphs are, respectively, the independent and spanning sets in the linear matroid of rows of $H(\p)$. We call this matroid the \textit{hyperconnectivity matroid} of $\p$ and denote it $\HH(\p)$. It is a matroid with ground set $\bnn$ and, for points affinely spanning $\R^d$, of rank given by Equation~\eqref{eq:rank}.
Let us remark that, although rigidity theory usually deals only with $\HH(\p)$ as an (unoriented) matroid, its definition as the linear matroid of a configuration of real vectors produces in fact an \emph{oriented matroid}.

The matroid $\HH(\p)$ is invariant under linear transformation. (More generally, it is invariant under projective transformation in $\R\proj^{d-1}$ as a quotient of $\R^d\setminus \{0\}$).
We say that the points chosen are in \emph{general position} for $\HH$ if no $d$ of them lie in an linear hyperplane. This implies that the matroid has the rank stated in Equation~\eqref{eq:rank} and that every copy of the graph $K_{d+2}$ is a circuit.  Nguyen~\cite{Nguyen} showed that the matroids on $\bnn$ with these properties are exactly the  \textit{abstract rigidity matroids} introduced by Graver in \cite{Graver}; other two matroids with the same characteristics are the classical (bar-and-joint) rigidity matroid and the cofactor rigidity matroid.
See, for example, \cite{GSS,Whiteley} for more information on rigidity matrices and their matroids.

Clearly, for each choice of the ``dimension'' $d$ there is a unique most free matroid that can be obtained, realized by sufficiently generic choices of the points. We call this the \emph{generic hyperconnectivity matroid of dimension $d$ on $n$ points}, and denote it $\HH_d(n)$. 
(Observe, however, that this generic matroid may stratify into several different generic oriented matroids; this is important for us since we will be concerned with the signs of circuits, by Theorem \ref{thm:fan}).

We can restrict the hyperconnectivity matroid to bipartite graphs \cite[Sect. 6]{Kalai}. The resulting matrix has the same form than $H(p)$, but in the points in the left side there is always $p_j'$ for the neighbours $j'$ in the right side, and for the points in the right side there is always $-p_i$ (or equivalently $p_i$, because we can change the sign of the columns) for the neighbours $i$.

The paper \cite{KNN} defines a more general concept of ``$(a,b)$-rigidity matroid'' where the points at one side of the graph are in one dimension $a$ and the points at the other side in a different dimension $b$. Hence, the matrix has $n_1n_2$ rows and $n_1b+n_2a$ columns. We are mainly not interested in this situation, but it will appear as an intermediate step in a proof.

In \cite{CreSan:moment} we prove that, for bipartite graphs, hyperconnectivity is a special case of bar-and-joint rigidity, more concretely:
\begin{theorem}[{\cite[Theorem 4.4]{CreSan:moment}}]
	\label{hyper-bipartite}
	Let $G=(V,E)$ be a bipartite graph with vertex partition $V=X\dot\cup Y$, and let $\{p_i : i\in V\} \subset \R^{d-1}$ be positions for its vertices. Then, the following two $d$-dimensional rigidity matroids coincide:
	\begin{enumerate}
		\item the bar-and-joint  matroid of the points $\{(p_i, 0): i\in X\} \cup \{(p_j, 1):j\in Y\}$.
		\item the hyperconnectivity matroid of the points $\{(p_i, 1): i\in X \cup Y\}$.
	\end{enumerate}
\end{theorem}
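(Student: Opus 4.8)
The statement is entirely about linear matroids of rows of matrices, so the plan is to reduce it to an explicit matrix identity that does not involve the edge set of $G$ at all. Concretely, I would exhibit an invertible linear change of coordinates $M$ of the column space $\bigoplus_{v\in V}\R^{d}$ (one block $\R^d$ per vertex) with the following property: after right-multiplying by $M$ and then flipping the sign of all columns belonging to vertices of $Y$, the bar-and-joint rigidity matrix of configuration~(1) turns into the bipartite hyperconnectivity matrix of configuration~(2), row by row, on every edge of the complete bipartite graph with parts $X$ and $Y$. Since right multiplication by an invertible matrix and sign changes of columns preserve the linear matroid on the rows, and since $M$ is built only from the points $p_v$, this proves the equality of the two matroids for every bipartite graph on $V=X\dot\cup Y$ simultaneously, by restriction to $E$.

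To carry this out I would first write the two matrices explicitly on a single bipartite edge $\{i,j\}$ with $i\in X$, $j\in Y$. Put $q_i=(p_i,0)$, $q_j=(p_j,1)$ and $p'_v=(p_v,1)$ in $\R^d=\R^{d-1}\oplus\R$, let $R$ be the bar-and-joint rigidity matrix of $(q_v)_{v\in V}$ and $H$ the bipartite hyperconnectivity matrix of $(p'_v)_{v\in V}$. Then the row of $R$ indexed by $\{i,j\}$ has $q_i-q_j=(p_i-p_j,-1)$ in the block of $i$ and $q_j-q_i=(p_j-p_i,1)$ in the block of $j$, while the row of $H$ has $p'_j=(p_j,1)$ in the block of $i$ and $-p'_i=(-p_i,-1)$ in the block of $j$. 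Take $M=\bigoplus_{v\in V}M_v$ with
\[
M_i=\begin{pmatrix}-I_{d-1}&0\\ -p_i&-1\end{pmatrix}\quad(i\in X),\qquad
M_j=\begin{pmatrix}-I_{d-1}&0\\ \phantom{-}p_j&\phantom{-}1\end{pmatrix}\quad(j\in Y),
\]
where $p_v$ is written as a row vector in the lower-left slot. Each $M_v$ has determinant $\pm1$, so $M$ is invertible, and a direct computation gives $(p_i-p_j,-1)M_i=(p_j,1)$ and $(p_j-p_i,1)M_j=(p_i,1)$. Hence $RM$ has $p'_j$ in the block of $i$ and $p'_i$ in the block of $j$; flipping the sign of the columns of the vertices in $Y$ then produces exactly $H$. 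I would finish by noting that two matrices with the same index set of rows, one obtained from the other by right multiplication by an invertible matrix, define the same matroid on their rows, and restricting this identity to $E$.

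I do not expect a genuinely hard obstacle here; the content is in locating the right vertex-wise map rather than in any estimate. The point is that $M_v$ is exactly the transformation sending the \emph{affine} difference $q_u-q_v$ that appears in the bar-and-joint matrix to the \emph{homogeneous} coordinate vector of the opposite point, and it necessarily differs between left and right vertices because the two sides of the bipartition are lifted to different heights; keeping the left/right asymmetry and all the signs straight is the only delicate bookkeeping. A more conceptual alternative would be to deduce the identity from Whiteley's coning theorem, viewing hyperconnectivity as the rigidity of a cone and the lift $(p_i,0)$ versus $(p_j,1)$ as a placement of that cone, but the explicit column transformation above is self-contained and I would present it that way.
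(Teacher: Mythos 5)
Your argument is correct, and the bookkeeping checks out: each $M_v$ is block lower-triangular with determinant $\pm 1$, the identities $(p_i-p_j,-1)M_i=(p_j,1)$ and $(p_j-p_i,1)M_j=(p_i,1)$ hold, and since every bipartite edge has exactly one endpoint in $Y$, negating the $Y$-columns turns $RM$ row-by-row into the bipartite hyperconnectivity matrix of the points $(p_v,1)$; as right multiplication by an invertible matrix and column sign changes preserve the linear matroid on the rows, restricting to $E$ gives the claim. Note that this paper does not prove Theorem~\ref{hyper-bipartite} itself but quotes it from \cite{CreSan:moment}, so there is no in-paper proof to compare with; your explicit vertex-wise column transformation is a valid self-contained proof in the same elementary linear-algebra spirit, and correctly restricts the claim to bipartite edge sets (for non-bipartite pairs the two matrices genuinely differ, so the blockwise identity is exactly the right scope).
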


In the same paper we prove that hyperconnectivity coincides with other two common theories, bar-and-joint and cofactor rigidity, when the points are chosen along the moment curve (for bar-and-joint and hyperconnectivity) and the parabola (for cofactor). More precisely:

\begin{theorem}[{\cite[Theorem 1.1]{CreSan:moment}, \cite[Theorem 2.17]{CreSan:Multi}}] 
	\label{thm:3matroids}
	Let $t_1< \dots < t_n\in \R$ be real parameters. Let 
	\[
	p_i=(1,t_i,\dots,t_i^{d-1})\in \R^d, \quad
	p'_i=(t_i,t_i^2,\dots,t_i^d)\in \R^d, \quad
	q_i=(t_i,t_i^2)\in \R^2.
	\]
	Then, the matrix $H(p_1,\dots,p_n)$, the bar-and-joint rigidity matrix $R(p'_1,\dots,p'_n)$ and the cofactor rigidity matrix $C_d(q_1,\dots,q_n)$ can be obtained from one another 
	multiplying on the right by a regular matrix and then multiplying rows by some positive scalars.
	
	In particular, the rows of the three matrices define the same oriented matroid. 
\end{theorem}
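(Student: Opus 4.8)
The plan is to prove the slightly stronger, purely matrix-theoretic statement that each of the two matrices $R(p_1',\dots,p_n')$ and $C_d(q_1,\dots,q_n)$ is obtained from $H(p_1,\dots,p_n)$ by right-multiplication by an explicit \emph{block-diagonal} regular matrix (one $d\times d$ block per vertex) followed by multiplication of the rows by explicit positive scalars; taking the inverse operations and composing then yields each of the three matrices from each other, as required. The oriented-matroid conclusion is then formal: right-multiplying a matrix by a regular matrix leaves the linear dependences among its rows unchanged, together with the signs of the coefficients in any such dependence, and scaling rows by positive numbers also does not affect those signs, so the three row configurations realize one and the same oriented matroid.

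Everything rests on two one-line identities. Write $\gamma(s)=(1,s,\dots,s^{d-1})\in\R^d$, so $p_i=\gamma(t_i)$, and for $t\in\R$ let $U(t)$ and $B(t)$ be the $d\times d$ upper-triangular matrices with $U(t)_{r,c}=t^{\,c-r}$ and $B(t)_{r,c}=\binom{c-1}{r-1}t^{\,c-r}$ for $r\le c$ (and zero below the diagonal); both have $1$'s on the diagonal, hence are regular. Expanding,
\[
\gamma(s)\,U(t)=\left(\frac{s^{\,c}-t^{\,c}}{\,s-t\,}\right)_{c=1}^{d},
\qquad
\gamma(s)\,B(t)=\gamma(s+t),
\]
so the vector $\gamma(s)U(t)$ is symmetric in $s$ and $t$, one has $p_i'-p_j'=(t_i-t_j)\,\gamma(t_j)U(t_i)$, and $\gamma(t_i)B(t_j)=\gamma(t_i+t_j)$.

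Now multiply $H(p_1,\dots,p_n)$ on the right by $\operatorname{diag}\!\big(U(t_1),\dots,U(t_n)\big)$. The row of $H$ indexed by an edge $\{i,j\}$ with $i<j$ has $\gamma(t_j)$ in the block of vertex $i$ and $-\gamma(t_i)$ in the block of vertex $j$; after the multiplication it has $\gamma(t_j)U(t_i)$ in the block of $i$ and $-\gamma(t_i)U(t_j)=-\gamma(t_j)U(t_i)$ in the block of $j$, i.e.\ it equals $(t_i-t_j)^{-1}$ times the corresponding row of $R(p_1',\dots,p_n')$. Since $t_i<t_j$, replacing the block-diagonal matrix by its negative and then multiplying the row of $\{i,j\}$ by the positive scalar $t_j-t_i$ produces $R(p_1',\dots,p_n')$ exactly. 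Likewise, multiplying $H(p_1,\dots,p_n)$ on the right by $\operatorname{diag}\!\big(B(t_1),\dots,B(t_n)\big)$ turns the row of $\{i,j\}$ into $(e_i-e_j)\otimes\gamma(t_i+t_j)$; recalling that the row of the $C^{d-2}_{d-1}$-cofactor rigidity matrix for planar points $q_\ell=(x_\ell,y_\ell)$ is, in a suitable normalization, $(e_i-e_j)\otimes\big((x_i-x_j)^{\,d-1-b}(y_i-y_j)^{\,b}\big)_{b=0}^{d-1}$ (see \cite{Whiteley}), which for the parabola points $q_\ell=(t_\ell,t_\ell^2)$ equals $(t_i-t_j)^{d-1}\,(e_i-e_j)\otimes\gamma(t_i+t_j)$, we obtain $C_d(q_1,\dots,q_n)$ after absorbing the sign $(-1)^{d-1}$ into the regular matrix and multiplying the row of $\{i,j\}$ by the positive scalar $(t_j-t_i)^{d-1}$. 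Since the block-diagonal matrices are invertible, all three matrices have equal rank, consistent with \eqref{eq:rank} and its bar-and-joint analogue.

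The one place that genuinely needs care is the cofactor step: one must fix the correct normalization of the $C^{d-2}_{d-1}$-cofactor rigidity matrix — including how its block for the second endpoint $j$ relates to that of $i$ — and verify that on the parabola its edge-$\{i,j\}$ row is literally the scalar multiple of $(e_i-e_j)\otimes\gamma(t_i+t_j)$ written above; this is exactly the classical identification of bivariate $C^{d-2}$-cofactor rigidity on a conic with $d$-dimensional bar-and-joint rigidity along the rational normal curve, and I expect it to be the main (though still routine) obstacle. Everything else is the two displayed identities for $U(t)$ and $B(t)$ together with the bookkeeping of signs that makes the row multipliers positive, which depends only on the ordering $t_1<\dots<t_n$ and on the parity of $d$, and so is harmless.
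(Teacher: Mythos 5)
Your proposal is correct: both displayed identities ($\gamma(s)U(t)$ symmetric in $s,t$ and $\gamma(s)B(t)=\gamma(s+t)$) and the sign bookkeeping check out, and the identification of the cofactor row on the parabola as $(t_i-t_j)^{d-1}(e_i-e_j)\otimes\gamma(t_i+t_j)$ is indeed what the standard normalization of the $C^{d-2}_{d-1}$-cofactor matrix gives, so the three matrices differ exactly by a regular right factor and positive row scalings. The paper itself does not reprove this theorem but cites \cite{CreSan:moment,CreSan:Multi}, and your argument is essentially the same explicit block-diagonal column transformation plus positive row rescaling used there.
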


\begin{definition}[Polynomial rigidity]
	\label{def:poly}
	We call the matrix $H(p_1,\dots,p_n)$ in the statement of Theorem~\ref{thm:3matroids} the \emph{polynomial $d$-rigidity matrix with parameters $t_1,\dots,t_n$}. We denote it $P_d(t_1,\dots,t_n)$, and denote $\PP_d(t_1,\dots,t_n)$ the corresponding matroid.
\end{definition}
For a bipartite graph, that means $(1,t_j',(t_j')^2,\ldots,(t_j')^{d-1})$ in the $i$-th part of the row $(i,j)$, and $(1,t_i,t_i^2,\ldots,t_i^{d-1})$ for the $j'$-th part.

Among the polynomial rigidity matroids $\PP_d(t_1,\dots,t_n)$ there is again one that is the most free, obtained with a sufficiently generic choice of the $t_i$. We denote it $\PP_d(n)$ and call it the \emph{generic polynomial $d$-rigidity matroid on $n$ points}. Theorem~\ref{thm:3matroids} implies that we can regard $\PP_d(n)$ as capturing generic bar-and-joint rigidity along the moment curve, generic hyperconnectivity along the moment curve, or generic cofactor rigidity on a conic.

It is known that $\PP_2(n)=\HH_2(n)$ (\cite[Theorem 2.2]{CreSan:moment}), but for higher dimensions, the equality of the matroids is an open problem.

\subsection{Properties of bipartite hyperconnectivity and polynomial rigidity}

The first property relates linear dependences in the hyperconnectivity matrix to linear dependences between the vectors.
\begin{lemma}\label{lemma:extprod}
	Given a framework $(V,E,p)$, the exterior product of two linear dependences between the points in $p$ gives a dependence in $H_d$.
\end{lemma}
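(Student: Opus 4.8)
The plan is to make the statement precise and then verify it by a direct block-by-block computation. Fix notation: let $\p=(p_1,\dots,p_n)$ with $p_i\in\R^d$, and call $a=(a_1,\dots,a_n)$, $b=(b_1,\dots,b_n)\in\R^n$ \emph{linear dependences of $\p$} when $\sum_i a_ip_i=0$ and $\sum_i b_ip_i=0$. Their exterior product is the vector $a\wedge b\in\R^{\bnn}$ whose coordinate on the edge $\{i,j\}$ with $i<j$ is $(a\wedge b)_{ij}:=a_ib_j-a_jb_i$. The assertion to prove is that $a\wedge b$, read as a linear combination of the rows of $H(\p)$ indexed by the corresponding edges, vanishes; equivalently, $a\wedge b$ lies in the left kernel of $H(\p)$, i.e.\ it is a self-stress of the framework.

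The core of the proof is a one-line expansion done one column block at a time. Recall that the row of $H(\p)$ for $\{i,j\}$ with $i<j$ has $p_j$ in the $i$-th block of $d$ columns, $-p_i$ in the $j$-th block, and zeros elsewhere. So $\sum_{\{i,j\}}(a\wedge b)_{ij}\cdot\mathrm{row}_{ij}$ receives, in its $\ell$-th block, contributions only from edges incident to $\ell$: from $\{\ell,m\}$ with $m>\ell$ a term $(a_\ell b_m-a_mb_\ell)\,p_m$, and from $\{m,\ell\}$ with $m<\ell$ a term $-(a_mb_\ell-a_\ell b_m)\,p_m=(a_\ell b_m-a_mb_\ell)\,p_m$ (the sign flip in the entry $-p_i$ cancels the antisymmetry of $a\wedge b$ under reversal of the edge orientation). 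Hence the $\ell$-th block equals
\[
\sum_{m\ne\ell}(a_\ell b_m-a_mb_\ell)\,p_m
= a_\ell\sum_{m\ne\ell}b_mp_m - b_\ell\sum_{m\ne\ell}a_mp_m
= a_\ell(-b_\ell p_\ell)-b_\ell(-a_\ell p_\ell)=0,
\]
using $\sum_m a_mp_m=\sum_m b_mp_m=0$. Since this holds for every $\ell$, the whole combination is zero, which is exactly the claim.

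I would then record the easy consequences used later: $a\wedge b$ is nonzero precisely when $a$ and $b$ are linearly independent, and its support is the set of edges $\{i,j\}$ on which the $2\times 2$ minor of $\binom{a}{b}$ in columns $i,j$ is nonzero. I would also remark that for points spanning $\R^d$ the identity $\binom{n-d}{2}=\binom{n}{2}-\bigl(dn-\binom{d+1}{2}\bigr)$ shows that the exterior square of the $(n-d)$-dimensional space of dependences of $\p$ has the same dimension as the space of self-stresses of $H(\p)$; together with injectivity of the wedge map (immediate from the block computation above) this gives that the two spaces coincide, mirroring the classical picture for bar-and-joint rigidity.

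There is essentially no obstacle here: the only point requiring care is the sign bookkeeping induced by orienting each edge $\{i,j\}$ as $i<j$, so that the entry $-p_i$ and the antisymmetry of $a\wedge b$ are tracked consistently; everything else is the routine expansion above.
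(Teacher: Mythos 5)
Your proof is correct and is essentially the same computation as the paper's: you expand the weighted sum of rows block by block at each vertex $\ell$, use the antisymmetry of $a\wedge b$ to unify the two orientations of an edge, and conclude from $\sum_m a_mp_m=\sum_m b_mp_m=0$ that each block vanishes. The additional remarks on the dimension count and the identification of the self-stress space with the exterior square go beyond what the lemma asks, but the core argument matches the paper's.
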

\begin{proof}
	By the definition of the matrix,
	\[\sum_{j<i}(c_jc'_i-c'_jc_i)(-p_j)+\sum_{j>i}(c_ic'_j-c'_ic_j)p_j=\sum_{j\ne i}(c_ic'_j-c'_ic_j)p_j=\]
	\[c_i\sum_{j\ne i}c'_jp_j-c'_i\sum_{j\ne i}c_jp_j=-c_ic'_ip_i+c'_ic_ip_i=0\]
\end{proof}

\begin{corollary}
	The $K_{d+1,d+1}$ graph on the nodes $p_1,p_2,\ldots$, $p_{d+1}$ and $p_1',p_2',\ldots$, $p_{d+1}'$ is a circuit in the hyperconnectivity matrix, with the coefficients given as the exterior product of the two linear dependences in the $p_i$ and the $p_i'$, which in this case gives just a tensor product.
\end{corollary}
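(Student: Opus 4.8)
The plan is to mimic the computation of Lemma~\ref{lemma:extprod}, adapted to the bipartite hyperconnectivity matrix, and then upgrade the resulting linear dependence to a circuit using general position. Recall that in the bipartite hyperconnectivity matrix the row indexed by the edge $(i,j)$ carries the vector $p'_j$ in the block of the left vertex $i$ and the vector $-p_i$ in the block of the right vertex $j'$, all other blocks being zero. Hence a coefficient vector $(a_{ij})$ is a linear dependence among the rows indexed by the edges of $K_{d+1,d+1}$ if and only if $\sum_{j=1}^{d+1} a_{ij}\,p'_j = 0$ for every left vertex $i$ and $\sum_{i=1}^{d+1} a_{ij}\,p_i = 0$ for every right vertex $j'$.

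First I would exhibit the dependence. Since $p_1,\dots,p_{d+1}$ are $d+1$ vectors of $\R^d$, they admit a nonzero linear dependence $\sum_i c_i p_i = 0$, and likewise there is a nonzero $c'$ with $\sum_j c'_j p'_j = 0$. Putting $a_{ij} := c_i c'_j$, one checks at once that $\sum_j a_{ij} p'_j = c_i \sum_j c'_j p'_j = 0$ and $\sum_i a_{ij} p_i = c'_j \sum_i c_i p_i = 0$, so the tensor $c\otimes c'$ is a nonzero linear dependence among the rows of $K_{d+1,d+1}$; this is the form that the exterior product of Lemma~\ref{lemma:extprod} takes once the two dependences are supported on disjoint vertex sets (the ``left'' and the ``right'' copies), since then the antisymmetrizing subtraction disappears. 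In particular $K_{d+1,d+1}$ is dependent.

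To see it is actually a \emph{circuit} I would invoke general position of the points: no $d$ of the $p_i$ and no $d$ of the $p'_j$ lie on a linear hyperplane (the statement fails without this, so it should be read in that context). General position makes $c$ and $c'$ unique up to rescaling and forces all $c_i$ and all $c'_j$ to be nonzero, so $c\otimes c'$ has full support on the edge set of $K_{d+1,d+1}$. Conversely, if $(a_{ij})$ is any dependence, then fixing $i$ the relation $\sum_j a_{ij} p'_j = 0$ and uniqueness of $c'$ give $a_{ij} = \lambda_i c'_j$ for a scalar $\lambda_i$, and symmetrically $a_{ij} = \mu_j c_i$; comparing and using that no $c_i, c'_j$ vanishes, the ratios $\lambda_i/c_i = \mu_j/c'_j$ are all equal to a single constant $\gamma$, so $(a_{ij}) = \gamma\,(c\otimes c')$. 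Thus the dependence space is one-dimensional and spanned by a full-support vector, which is exactly the assertion that $K_{d+1,d+1}$ is a circuit with circuit vector $c\otimes c'$. (Alternatively, minimality can be read off the known rank of the bipartite hyperconnectivity matroid, under which $K_{d,d}$ is independent and $K_{d+1,d+1}$ has corank one.)

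The only genuinely delicate point is the general-position hypothesis, which is used twice: for the uniqueness of $c$ and $c'$, hence the one-dimensionality of the dependence space, and for the non-vanishing of their entries, hence the full support that makes the dependence a circuit rather than a proper dependent set. Beyond that the argument is the bipartite analogue of the classical fact that $K_{d+2}$ is a circuit in the $d$-dimensional rigidity matroid, so I expect no serious obstacle.
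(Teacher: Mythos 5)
Your proof is correct and takes essentially the same route the paper intends: the tensor product $c\otimes c'$ is the specialization of Lemma~\ref{lemma:extprod} to dependences supported on the two sides of the bipartition, and the upgrade from ``dependent'' to ``circuit'' via general position (uniqueness and full support of $c$ and $c'$, hence one-dimensionality of the dependence space) is exactly the content the paper leaves implicit. Your explicit remark that general position is needed for the circuit claim, and the alternative corank-one argument via the rank formula of Theorem~\ref{thm:complete_bipartite}, are both accurate.
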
 

Although the unrestricted hyperconnectivity matroid has the same rank as the bar-and-joint and cofactor matroids, there are no bipartite bases of this matroid (unlike both bar-and-joint and cofactor matroids, for which $K_{d+1,\binom{d+1}2}$ is a basis). So if we restrict it to bipartite graphs, the rank falls.

\begin{theorem}  
	\label{thm:complete_bipartite}
	Let $G=K_{n_1,n_2}$ be a complete bipartite graph. Assuming that the positions chosen for each part of vertices are linearly independent, 
	the rank of $G$ in the $d$-hyperconnectivity matroid  equals:
	\begin{enumerate}
		\item $n_1n_2$ (that is, $G$ is independent) if $\min\{n_1,n_2\} \le d$.
		\item $dn-d^2$ if $\min\{n_1,n_2\} \ge d$,  where $n=n_1+n_2$ is the number of vertices.
	\end{enumerate}
\end{theorem}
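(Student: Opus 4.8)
The plan is to compute the rank by describing explicitly the space of self-stresses (linear dependences among the rows) of the bipartite hyperconnectivity matrix restricted to $G=K_{n_1,n_2}$; the rank is then $n_1n_2$ minus the dimension of that space. Write $p_1,\dots,p_{n_1}\in\R^d$ for the positions of the left vertices and $p'_1,\dots,p'_{n_2}\in\R^d$ for those of the right vertices, and let $A=[\,p_1\,|\cdots|\,p_{n_1}\,]$ and $B=[\,p'_1\,|\cdots|\,p'_{n_2}\,]$ be the associated $d\times n_1$ and $d\times n_2$ matrices. The first step is to exploit that $G$ is \emph{complete} bipartite: the block of columns attached to a left vertex $i$ receives nonzero entries only from the rows indexed by edges $(i,j)$, $j\in[n_2]$, and the block attached to a right vertex $j'$ only from the rows $(i,j)$, $i\in[n_1]$. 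Hence, writing a candidate dependence as a coefficient matrix $C=(c_{ij})\in\R^{n_1\times n_2}$, the self-stress condition decouples block by block, with no cross terms, into $\sum_j c_{ij}p'_j=0$ for every $i$ (each \emph{row} of $C$ is a linear dependence among the right points) and $\sum_i c_{ij}p_i=0$ for every $j$ (each \emph{column} of $C$ is a linear dependence among the left points); equivalently, $C$ is a self-stress if and only if $AC=0$ and $CB^{\mathsf T}=0$.

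The second step is to identify this space. The set $\{C:AC=0,\ CB^{\mathsf T}=0\}$ is precisely the set of matrices whose columns lie in $\ker A\subseteq\R^{n_1}$ and whose rows lie in $\ker B\subseteq\R^{n_2}$, i.e.\ the tensor product $\ker A\otimes\ker B$, of dimension $\dim\ker A\cdot\dim\ker B=(n_1-\operatorname{rk}A)(n_2-\operatorname{rk}B)$. The inclusion $\ker A\otimes\ker B\subseteq\{\text{self-stresses}\}$ is the bipartite form of Lemma~\ref{lemma:extprod} (the self-stress attached to a dependence $u\in\ker A$ among the left points and a dependence $v\in\ker B$ among the right points being the rank-one matrix $uv^{\mathsf T}$), extended to the whole subspace by linearity; the reverse inclusion is the equivalence established in the first step. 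Therefore the dimension of the space of self-stresses of $K_{n_1,n_2}$ equals $(n_1-\operatorname{rk}A)(n_2-\operatorname{rk}B)$, and the rank of $G$ equals $n_1n_2-(n_1-\operatorname{rk}A)(n_2-\operatorname{rk}B)$.

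Finally I would substitute the genericity hypothesis: since the positions in each part are as generic as allowed in $\R^d$ (any $\min\{\cdot,d\}$ of them linearly independent), $\operatorname{rk}A=\min\{n_1,d\}$ and $\operatorname{rk}B=\min\{n_2,d\}$. If $\min\{n_1,n_2\}\le d$, one of the factors $(n_i-\operatorname{rk})$ vanishes, so there are no nontrivial self-stresses and the rank is $n_1n_2$, i.e.\ $G$ is independent, which is case (1). If $n_1,n_2\ge d$, both ranks equal $d$ and the rank of $G$ is
\[
n_1n_2-(n_1-d)(n_2-d)=d(n_1+n_2)-d^2=dn-d^2,
\]
which is case (2). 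I do not expect a genuine obstacle here: the only point demanding care is the first step — verifying that for the \emph{complete} bipartite graph the self-stress conditions decouple into exactly the two families $AC=0$ and $CB^{\mathsf T}=0$, so that the self-stress space is literally $\ker A\otimes\ker B$ and not merely a subspace containing it — after which everything reduces to linear algebra together with the maximality of $\operatorname{rk}A$ and $\operatorname{rk}B$ for generic positions in each part.
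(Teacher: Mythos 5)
Your proof is correct and rests on the same key idea as the paper's: self-stresses of $K_{n_1,n_2}$ are exactly the tensor products of linear dependences among the left points with linear dependences among the right points. The difference is one of completeness rather than of route. The paper's proof of part (2) only exhibits the $(n_1-d)(n_2-d)$ dependences $l\otimes m$, i.e.\ it establishes the inclusion $\ker A\otimes\ker B\subseteq\{\text{self-stresses}\}$ and hence, strictly speaking, only the upper bound $\operatorname{rk}(G)\le dn-d^2$; the reverse inclusion — that every self-stress $C$ decouples into $AC=0$ and $CB^{\mathsf T}=0$ and therefore lies in $\ker A\otimes\ker B$ — is left implicit, and part (1) is handled separately via the abstract-rigidity fact that the edges at a vertex of degree at most $d$ are independent of the rest. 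Your version makes the exact identification of the self-stress space explicit, which closes the lower-bound half of the rank computation and yields both parts uniformly. The only point to flag is interpretive: the hypothesis that the positions in each part are ``linearly independent'' must be read as $\operatorname{rk}A=\min\{n_1,d\}$ and $\operatorname{rk}B=\min\{n_2,d\}$ (for part (2), that each part spans $\R^d$), which is exactly how you use it.
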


\begin{proof}
	For part (1), the edges of a vertex of degree $d$ or less are independent of the rest, in an arbitrary graph. This is a common property of all abstract rigidity matroids.

	For part (2), as the matrix has $n_1n_2$ rows, we just have to find $(n_1-d)(n_2-d)$ dependences among them; that is, vectors orthogonal to every column.  We consider columns as living in $\R^{n_1}\otimes \R^{n_2}$ and observe that every tensor product $l\otimes m$ of a linear dependence $l$ among the points on one side and a linear dependence $m$ among the points on the other side is such an orthogonal vector. That is, we have the tensor product of two linear spaces of dimensions $n_1-d$ and $n_2-d$, which indeed has dimension $(n_1-d)(n_2-d)$.
\end{proof}

\begin{remark}
	It is also easy to find $d^2$ linear dependences among the $nd$ columns. 
	Let $A=\{x_1,x_2,\ldots,x_{n_1}\}\subset \R^d, B=\{y_1,y_2,\ldots,y_{n_2}\}\subset \R^d$ be the positions for our vertices.  For each pair $(l,l') \in [d]^2$, let $U\in \R^{dn}$ be the vector with coordinates 
	\[u_{il}=x_{il'}, i\in[n_1]\]
	\[u'_{jl'}=-y_{jl}, j\in[n_2],\]
	where the $u_{il}$ are the coefficients for the first half of the columns and $u'_{jl}$ for the second half. The coefficients $u_{ih}$ or $u'_{jh'}$ with $h\ne l$ or $h'\ne l'$ are zero.
	Then, $U$ is the vector of coefficients in the sought dependence.
\end{remark}

\begin{remark}
	The rank computed in Theorem~\ref{thm:complete_bipartite} coincides with the rank of $G$ in the usual bar-and-joint rigidity for generic positions on the moment curve. This supports the conjecture that generic hyperconnectivity coincides with generic rigidity along the moment curve. Put differently, that for hyperconnectivity the moment curve is generic enough.
\end{remark}

The next result is a bipartite version of the linear invariance of hyperconnectivity:

\begin{proposition}
	\label{prop:invariance}
	Let $n_1,n_2\in\N$ and $\p=(p_1,\dots,p_{n_1},p_1',\ldots,p'_{n_2})$ be a vector configuration in $\R^d$.
	Then, 
	\begin{enumerate}
		\item The column-space of the $H_{K_{n_1n_2}}(\p)$, hence the oriented matroid of its rows, that is, the bipartite hyperconnectivity matroid, is invariant under a linear transformation of the points $(p_1,\dots,p_{n_1})$ in one side of the graph.
		\item The matroid $\HH_d(\p)$ is also invariant under rescaling (that is, multiplication by non-zero scalars) of the vectors $p_i$. If the scalars are all positive then the same holds for the oriented matroid.
	\end{enumerate}
\end{proposition}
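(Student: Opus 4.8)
The plan is to prove both parts by writing the transformed matrix as a product of $H_{K_{n_1n_2}}(\p)$ with explicit invertible matrices — on the right, and for part (2) also on the left — and then invoking two elementary facts: (a) right-multiplication by an invertible matrix changes neither the column space of a matrix (the image stays the same) nor the oriented matroid of its rows (the rows merely undergo a common invertible linear change of coordinates); and (b) left-multiplication by an invertible diagonal matrix preserves the matroid of the rows in all cases, and preserves the oriented matroid exactly when the diagonal entries are positive. I will use repeatedly the shape of $H_{K_{n_1n_2}}(\p)$: its row indexed by $(i,j)$ carries $p'_j$ in the $i$-th left block of $d$ columns, $-p_i$ in the $j$-th right block, and zeros elsewhere; in particular the left points $p_1,\dots,p_{n_1}$ occur only inside the right column-blocks and the right points $p'_1,\dots,p'_{n_2}$ only inside the left column-blocks.

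For part (1), let $A$ be an invertible $d\times d$ matrix and apply $p_i\mapsto Ap_i$ to the left points. This alters only the right column-blocks: in the $j$-th right block, the entry $-p_i$ of row $(i,j)$ becomes $-Ap_i$. A one-line check on row $(i,j)$ then shows that the transformed matrix equals $H_{K_{n_1n_2}}(\p)$ right-multiplied by the block-diagonal matrix that is the identity on each of the $n_1$ left blocks of columns and $A^\top$ on each of the $n_2$ right blocks (the transpose because one is right-multiplying rows), which is invertible. By fact (a), both the column space and the oriented matroid of the rows are unchanged; the latter is the bipartite hyperconnectivity matroid. Applying instead an invertible transformation to the right points is the mirror statement, obtained by swapping the two sides, and is proved identically.

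For part (2), scale the left points by $p_i\mapsto\lambda_ip_i$ and the right points by $p'_j\mapsto\mu_jp'_j$, with all $\lambda_i,\mu_j\neq 0$. Inspecting row $(i,j)$ shows that the transformed matrix equals $\Delta\,H_{K_{n_1n_2}}(\p)\,C$, where $\Delta$ is the diagonal matrix with entry $\lambda_i\mu_j$ in the row indexed by $(i,j)$ and $C$ is the block-diagonal matrix multiplying the $i$-th left block of columns by $\lambda_i^{-1}I_d$ and the $j$-th right block by $\mu_j^{-1}I_d$. Right-multiplication by the invertible $C$ preserves the oriented matroid of the rows by fact (a), while left-multiplication by the invertible diagonal $\Delta$ preserves the matroid in all cases and the oriented matroid exactly when the diagonal of $\Delta$ is positive — that is, when all the scalars are positive — by fact (b). Since the same factorization restricts verbatim to any subgraph of $K_{n_1,n_2}$ (deleting rows commutes with the two multiplications), the conclusion holds for $\HH_d(\p)$ on an arbitrary bipartite graph. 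I do not foresee a real obstacle; the only points calling for care are the standard implication in part (1) that invariance of the column space yields invariance of the oriented matroid of the rows — which I route through fact (a) rather than through covectors — and the sign bookkeeping in part (2), which is exactly what forces the positivity hypothesis in the last sentence of the statement.
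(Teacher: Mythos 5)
Your proof is correct and follows essentially the same route as the paper: part (1) is the same right-multiplication by a block-diagonal matrix (identity on the left blocks, the transformation on the right blocks), and part (2) is the same row-scaling/column-scaling factorization, which you simply write out for all vertices at once where the paper cites its earlier lemma and treats one vertex at a time. No issues.
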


\begin{proof}
	Both transformations consist in making linear operations in the matrix: for the first, multiplying at the right by
	\[\begin{pmatrix}
		I & 0 \\
		0 & M
	\end{pmatrix}\]
	where $I$ is the identity matrix with size $n_1d$ and $M$ is a block-diagonal matrix with $n_2$ blocks, all equal to the matrix of the linear transformation. As we are multiplying at the right, this does not change the column-space.
	
	For the second, as already noted in \cite[Lemma 2.1]{CreSan:moment}, if $p_i$ is rescaled to $\alpha_ip_i$, the effect in the matrix is multiplying the rows for the edges in the vertex $i$ by $\alpha_i$ and then dividing the columns for the vertex $i$ by the same scalar. If the scalars are positive, it does not affect the oriented matroid.
\end{proof}

We also need a bipartite version of the operation called \textit{coning}. In this operation, a new vertex is added to the graph and connected to all the rest, but obviously we cannot do this with a bipartite graph. However, there is another operation that works.

\begin{definition}
	Given a graph $G=([n_1]\cup [n_2]',E)$, the bipartite coning of $G$ is the graph with vertex set $[n_1+1]\cup[n_2+1]'$ where the two new vertices are joined to all the previous ones in the opposite side.
\end{definition}

It is proved in \cite[Lemma 3.12]{KNN} that the bipartite coning of $G$ is independent in the generic hyperconnectivity matroid in dimension $d+1$ if and only if $G$ is independent in dimension $d$. Our next result is more precise in two aspects: we deal with arbitrary, not necessarily generic, positions, and we include information about the \emph{oriented} matroid, not just the matroid.

\begin{theorem}[Coning]
For any vector configuration $V$, the contraction of the oriented matroid for $K_{n_1+1,n_2+1}$ in dimension $d+1$ by the edges in vertices $n_1+1,(n_2+1)'$ gives the oriented matroid for $K_{n_1,n_2}$ in dimension $d$, where the vector configuration is contracted by the vectors for $a$ and $b$.
\end{theorem}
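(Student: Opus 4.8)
The plan is to realize the ``coning'' operation explicitly at the level of matrices, exactly as in the proof of Proposition~\ref{prop:invariance}, and then read off what contraction does. Let $V=(p_1,\dots,p_{n_1},p_1',\dots,p_{n_2}')$ be the given configuration in $\R^d$, and let $a,b\in\R^d$ be arbitrary vectors; form the lifted configuration $\widehat V$ in $\R^{d+1}$ by setting $\widehat p_i=(p_i,0)$ for $i\in[n_1]$, $\widehat p_j'=(p_j',0)$ for $j\in[n_2]$, $\widehat p_{n_1+1}=(a,1)$ and $\widehat p_{(n_2+1)'}=(b,1)$. The first step is to write down $H_{K_{n_1+1,n_2+1}}(\widehat V)$ in block form, separating the rows indexed by edges at $n_1+1$ or at $(n_2+1)'$ from the rest, and the columns for the $(d+1)$-st coordinate from the first $d$.

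The key observation is that the edges at the two new cone vertices are independent and, after the obvious row and column operations, decouple from the rest. Concretely, a row for an edge $(n_1+1,j)$ with $j\in[n_2]$ has $\widehat p_j'=(p_j',0)$ in its left block and $-\widehat p_{n_1+1}=(-a,-1)$ in the $(n_2+1)'$ column block; the only place a $-1$ appears in the last coordinate of that column block is in these rows, so these rows are independent of everything else and of each other, and symmetrically for edges $(i,n_2+1)$, $i\in[n_1]$. Contracting the oriented matroid by this set of rows means, concretely: restrict attention to the remaining rows (the edges of $K_{n_1,n_2}$), and quotient the column space by the span of the columns hit by the contracted rows. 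I would show that after suitable column operations (subtracting multiples of the last-coordinate columns, using the $-1$'s just described) the remaining rows have last-coordinate entries all zero and, in the first $d$ coordinates, reduce exactly to $H_{K_{n_1,n_2}}$ of the configuration $(p_1,\dots,p_{n_1},p_1',\dots,p_{n_2}')$ — note the cone vectors $a,b$ only ever enter through the contracted rows/columns and so disappear. This gives an isomorphism of the column spaces compatible with signs, hence an isomorphism of oriented matroids.

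The one genuinely delicate point — and the step I expect to be the main obstacle — is bookkeeping the signs carefully enough to conclude equality of \emph{oriented} matroids rather than just matroids, since \cite[Lemma 3.12]{KNN} only gives the unoriented statement. The row and column operations used must be tracked so that each circuit's sign vector is preserved (or uniformly negated, which is harmless); in particular one must check that the column rescalings/clearings introduced have positive sign, or absorb any sign changes into a global reorientation. A secondary subtlety is that the statement is for \emph{arbitrary} $V$ (and arbitrary $a,b$), so one cannot invoke genericity; the decoupling argument above is robust to this because it only uses the structural position of the $\pm1$ entries in the $(d+1)$-st coordinate block, not any non-degeneracy of $p_i,p_j',a,b$. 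Finally I would remark that combined with Theorem~\ref{thm:complete_bipartite} this recovers the KNN independence statement, and record the cross-reference (this is the result referred to as ``Corollary~\ref{bipfree}'' used in the deduction of Theorem~\ref{thm:hyperconnectivity} from Theorem~\ref{thm:triangbases}).
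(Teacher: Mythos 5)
Your matrix-level strategy is the same as the paper's, and the sign bookkeeping you worry about is indeed unproblematic for the reason you give (each contraction is realized by projecting along a row that is the unique one with a nonzero entry in some coordinate, so the remaining rows are literally unchanged). But there is a genuine gap in generality: you have reversed the logical direction of the statement. The theorem takes an \emph{arbitrary} configuration $V$ in $\R^{d+1}$ for $K_{n_1+1,n_2+1}$ and asserts that contracting by the cone edges yields the hyperconnectivity matroid of the \emph{quotient} configuration in $\R^d$. You instead start from a $d$-dimensional configuration and build a special lift in which every non-cone point has last coordinate $0$ and the two cone points sit at height $1$. That special position is exactly what makes your ``decoupling'' work (the only nonzero entries in the last coordinate of each column block come from cone edges), but it is not achievable in general: by Proposition \ref{prop:invariance} you may apply a separate linear map to each side, yet you can only flatten $p_1,\dots,p_{n_1}$ into a hyperplane missed by the cone vector when the cone vector lies outside their span. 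This fails precisely in the cases the theorem is used for — in Corollary \ref{cor:signconing} the points are $(1,t,\dots,t^d)$ on the moment curve in $\R^{d+1}$ and generically span it once $n_1\ge d+1$.

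The paper's proof closes this gap with one extra ingredient you are missing: after normalizing the cone vector of one side to $(1,0,\dots,0)$, it uses the $\binom{}{}$known column dependences of the bipartite hyperconnectivity matrix (the Remark after Theorem \ref{thm:complete_bipartite}) to \emph{delete the entire column block of the cone vertex}, which is redundant and hence does not change the oriented matroid of the rows. Only after that deletion do the cone-edge rows become standard basis vectors, with no hypothesis on the positions of the remaining points; contracting them then removes the first column of each opposite-side block, i.e., contracts those vectors by the cone vector, and one passes through the intermediate $(d+1,d)$-rigidity matrix before repeating on the other side. You should also fix a small slip: the row of edge $(n_1+1,j)$ carries $-\widehat p_{n_1+1}=(-a,-1)$ in the column block of $j'$, not of $(n_2+1)'$ (the corrected statement is what your argument actually needs: each cone edge has its private coordinate in the block of its non-cone endpoint). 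Finally, the closing cross-reference is off — Corollary \ref{bipfree} concerns bipartization versus dimension doubling, not coning.
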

\begin{proof}
	Let $G'$ be the bipartite coning of $G$. 
	Let us
represent the two vertex additions in the rigidity matrix as follows: Start with the $(d+1,d+1)$-rigidity matrix and make a linear transformation to send the new vertex $b$ to $(1,0,\ldots,0)$, that we will suppose to be in the right side. Now delete a block of columns in that side, which is redundant for the matroid because of the linear dependences in the columns, so that the rows for the edges in $b$ have only a nonzero entry. Thus, we can delete those rows and the associated columns, which are the first columns in all the blocks in the left side. The resulting matrix is the $(d+1,d)$-rigidity matrix for $G'\setminus b$ and the configuration where the right vectors have lost the first coordinate, that is, they have been contracted by $b$. Repeating this transformation with the other side, we obtain the result.
\end{proof}

We also want to see what happens with the oriented matroid of polynomial rigidity when the new vertex is not added the last, but in any other position.

\begin{corollary}\label{cor:signconing}
	Let $C$ be a bipartite circuit in $\PP_d$. The bipartite coning of $C$ where the vertices $i_0$ and $j_0'$ are added is a circuit in $\PP_d$ where the sign of an edge $(i,j)$ is multiplied by that of $(i-i_0)(j-j_0)$.
\end{corollary}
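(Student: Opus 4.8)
The plan is to deduce Corollary~\ref{cor:signconing} from the Coning theorem together with the sign-change behaviour of polynomial rigidity under reordering of the parameters. The Coning theorem tells us, for \emph{arbitrary} positions, that contracting the oriented matroid of $K_{n_1+1,n_2+1}$ in dimension $d+1$ at the cone vertices recovers the oriented matroid of $K_{n_1,n_2}$ in dimension $d$, contracted at the corresponding vectors. So the first step is to set up the polynomial-rigidity instance: choose parameters $t_1,\dots,t_{n_1}$ for the left vertices and $t_1',\dots,t_{n_2}'$ for the right ones realizing the circuit $C$ in $\PP_d$, and then realize the bipartite cone in $\PP_{d+1}$ by assigning the new left vertex $i_0$ a parameter $s$ and the new right vertex $j_0'$ a parameter $s'$. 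The point $(1,s,\dots,s^d)$ in dimension $d+1$ has the property that the contraction by it of all the other points $(1,t_i,\dots,t_i^d)$ sends $(1,t_i,\dots,t_i^d)\mapsto$ (up to the linear and rescaling moves allowed by Proposition~\ref{prop:invariance} and Theorem~\ref{thm:3matroids}) a rescaling of $(1,t_i,\dots,t_i^{d-1})$ by a factor proportional to $(t_i-s)$; this is the standard ``deflation'' computation for the moment curve (divide the degree-$d$ Vandermonde-type data by the linear form vanishing at $s$).

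The key computation, then, is: the contraction of the $(d+1)$-dimensional polynomial configuration for $K_{n_1+1,n_2+1}$ at the vertices $i_0,j_0'$ produces, by the Coning theorem, the $d$-dimensional bipartite configuration for $K_{n_1,n_2}$, but where the $i$-th left vector has been \emph{rescaled} by $(t_i - s)$ and the $j'$-th right vector has been rescaled by $(t_j' - s')$. By Proposition~\ref{prop:invariance}(2), rescaling a left vertex $i$ by a scalar $\alpha_i$ multiplies the sign of every edge at $i$ by $\sign(\alpha_i)$, and similarly on the right. Hence the edge $(i,j)$ of the circuit $C$, viewed inside the cone, carries the original sign multiplied by $\sign\big((t_i-s)(t_j'-s')\big)$. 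Choosing $s$ to lie in the same position in the ordering of the left parameters as $i_0$ among $1,\dots,n_1$ — i.e. $s$ placed so that $t_i - s$ has the sign of $i - i_0$ — and likewise $s'$ placed according to $j_0$, we get exactly the factor $\sign\big((i-i_0)(j-j_0)\big)$ claimed. Since $C$ is a circuit in $\PP_d$ (all positions along the moment curve being sufficiently generic for the matroid while the oriented matroid only depends on the order of the parameters, by Theorem~\ref{thm:3matroids} and Lemma~\ref{lemma:chsign}), its cone is a circuit in $\PP_{d+1}$ and the signs of its unique dependence transform as described.

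The main obstacle I anticipate is bookkeeping the direction of the rescaling: one must be careful that the ``deflation'' factor really is $(t_i - s)$ and not its reciprocal or negative, and that the reversal convention for right vertices (used elsewhere in the bipartization) does not flip a sign here; getting the constant of proportionality to be \emph{positive} is what lets us invoke Proposition~\ref{prop:invariance}(2) and conclude only a $\sign$ change rather than an uncontrolled one. A secondary point is to confirm that the coning construction can indeed be carried out with polynomial (moment-curve) positions in dimension $d+1$ rather than merely generic ones — but this is immediate because the new vertices are assigned new parameters $s,s'$ and the whole configuration still lies on the moment curve, so Theorem~\ref{thm:3matroids} and the Coning theorem both apply verbatim. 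Once the rescaling factors are pinned down, the corollary is just the translation of ``rescale vertex $i$ by something of sign $\sign(i-i_0)$'' through Proposition~\ref{prop:invariance}(2).
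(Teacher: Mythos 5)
Your proposal is correct and follows essentially the same route as the paper: both reduce to coning one side at a time, send the new cone vertex to a reference point by translating the parameters (so the deflation factor for vertex $j$ is exactly $t_j'-t_{j_0}'$, of sign $j-j_0$), and then read off the sign change as a positive/negative rescaling of the contracted moment-curve vectors via Proposition~\ref{prop:invariance}. The only difference is presentational — the paper carries out the row/column manipulations on the matrix explicitly, while you phrase the same computation as a contraction of the oriented matroid followed by vertex rescaling.
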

\begin{proof}
	As in the previous proof, we can do the coning in two steps, adding first the vertex at one side and then the other.
	
	Suppose we are adding $j_0'$ to get $G'$. The matrix after the addition is the polynomial rigidity matrix in dimension $d+1$ for $(t_1,\ldots,t_{n_1},t_1',\ldots,t_{n_2}')$. As we did before, move the vertex $j_0$ to $(1,0,\ldots,0)$. We can do this by translating all the parameters $t'$ by $-t_{j_0}'$, which is a linear transformation in the points. Now we have $t_{j_0}'=0$ and the sign of $t_j'$ is the same than $j-j_0$.
	
	Then, removing rows and columns, we end up with the $(d+1,d)$-rigidity matrix for $G'\setminus j_0'$ in the positions $(p_1,\ldots,p_{n_1},p_1'',\ldots,p_{n_2}'')$, where $p_j''=(t_j',(t_j')^2,\ldots,(t_j')^d)=t_j'p_j'$.
	
	Now, dividing the row for edge $(i,j)$ by $t_j'$ and then multiplying the columns in the block for $j$ by $t_j'$, we have the polynomial rigidity matrix for dimension $d+1$ in the left and $d$ in the right. Repeating this for $i_0$, we finally obtain dimension $d$.
\end{proof}

The following result is about the number of sign changes in the sequence of coefficients for the edges in a given vertex.

\begin{lemma}\label{lemma:chsign}
	Let $\lambda\in \R^{n_1n_2}$ be a linear dependence in the polynomial rigidity matroid and $v$ a vertex. The sequence $\{\lambda_{ij}\}_{1\le j\le n_2}$ of signs of the edges in $v$ change at least $d$ times.
\end{lemma}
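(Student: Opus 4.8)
The plan is to reduce the statement to a classical sign-change property of Vandermonde (Chebyshev) systems. By the evident symmetry between the two parts of the bipartite graph, I would first assume $v=i_0$ is a left vertex; the right-vertex case is identical after exchanging the roles of the two sides (and, if necessary, changing signs of columns, which does not affect $\lambda$). The edges at $v$ are $(i_0,1),\dots,(i_0,n_2)$, ordered by $j$, i.e.\ by the increasing parameters $t_1'<\dots<t_{n_2}'$, and I set $c_j:=\lambda_{i_0 j}$. Reading off from $P_d(\bt)$ which equations $\lambda$ satisfies, the $d$ columns in the block attached to the left vertex $i_0$ give exactly
\[
\sum_{j=1}^{n_2} c_j\,(t_j')^{\ell}=0\qquad(\ell=0,1,\dots,d-1),
\]
so $(c_j)_j$ is orthogonal to the vector of evaluations at $t_1',\dots,t_{n_2}'$ of every polynomial of degree at most $d-1$. (If $c=0$ there is nothing to prove, so assume $c\neq 0$; note this already forces $n_2\ge d+1$, in agreement with Theorem~\ref{thm:complete_bipartite}.)

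The heart of the matter is then the claim that such a $(c_j)_j$ changes sign at least $d$ times, which I would prove by contradiction. Deleting the zero entries and assuming that only $s\le d-1$ sign changes remain, I would build a polynomial $p$ of degree $s$ by placing one simple root strictly between the two consecutive nodes at each location where the sign flips, and fixing the global sign of $p$ so that $\sign\bigl(p(t_j')\bigr)=\sign(c_j)$ for every $j$ with $c_j\neq 0$. Since $\deg p\le d-1$, the displayed relations give $\sum_j c_j\,p(t_j')=0$; but by construction $c_j\,p(t_j')\ge 0$ for all $j$ and is strictly positive for at least one $j$, a contradiction. Hence $(c_j)_j$ has at least $d$ sign changes, which is the assertion.

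I do not expect a genuine obstacle here: the whole argument is a statement about polynomials, which is precisely why the lemma is phrased for polynomial rigidity (points on the moment curve) rather than for arbitrary hyperconnectivity — what is really used is the total positivity of the Vandermonde matrix $\bigl((t_j')^{\ell}\bigr)_{0\le\ell\le d-1,\,1\le j\le n_2}$. The only point requiring care is the construction of $p$ together with the bookkeeping that its sign pattern at the nodes matches that of $(c_j)_j$, and the convention that zeros among the $c_j$ are simply ignored when counting sign changes; both are routine once the indices are set up carefully.
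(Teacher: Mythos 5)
Your proof is correct and follows essentially the same route as the paper: both restrict $\lambda$ to the star of $v$, observe that the resulting coefficient vector is a linear dependence among the moment-curve vectors $(1,t_j',\dots,(t_j')^{d-1})$ (equivalently, an affine dependence among vertices of a cyclic polytope), and conclude that such a dependence has at least $d$ sign changes. The only difference is in the last step: the paper cites the alternating structure of circuits of the cyclic polytope together with a decomposition of dependences into circuits, whereas you prove the sign-change bound directly by the standard root-placement polynomial argument, which is if anything the more airtight justification of that classical fact.
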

\begin{proof}
	The sequence that we want to study is a linear dependence among the vectors $(1, t_j,\dots, t_j^{d-1})$ for an increasing sequence of $t_j$'s. Put differently, it is an affine dependence among the vertices of a cyclic $(d-1)$-polytope. It is well-known that the circuits in the cyclic polytope are alternating sequences with $d+1$ points, hence they have $d$ sign changes, and every dependence is a composition of circuits, hence it has at least the same number of changes.
\end{proof}

\begin{corollary}\label{cor:signKd+1}
	In that setting, the signs of the edges in a vertex of degree $d+1$ alternate. Moreover, for the circuit $K_{d+1,d+1}$, we have $\sign(\lambda_{ij})=(-1)^{i+j}$.
\end{corollary}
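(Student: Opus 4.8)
The plan is to obtain both assertions as short counting consequences of Lemma~\ref{lemma:chsign}, which already supplies the only real content; Corollary~\ref{cor:signKd+1} is essentially the observation that the bound of that lemma is tight at a vertex of minimum relevant degree.

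First I would handle the general statement. Let $\lambda$ be a dependence in $\PP_d$ (for the corollary the relevant case is a circuit, so $\lambda$ is nonzero on all its edges) and let $v$ be a vertex of degree exactly $d+1$. Then the sign sequence $(\lambda_{vj})_j$ has $d+1$ terms; a sequence of $d+1$ reals can change sign at most $d$ times, and a zero entry would lower this count further. On the other hand, Lemma~\ref{lemma:chsign} guarantees at least $d$ sign changes. Hence every incident coefficient is nonzero and consecutive ones always have opposite sign, i.e. the sequence strictly alternates. This is exactly the first sentence of the corollary.

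For $K_{d+1,d+1}$, recall first (corollary immediately following Lemma~\ref{lemma:extprod}, together with the rank count in Theorem~\ref{thm:complete_bipartite}) that $K_{d+1,d+1}$ is a circuit in $\PP_d$, so its dependence $\lambda=(\lambda_{ij})$ is unique up to a nonzero scalar and has full support. Every left vertex $i$ and every right vertex $j'$ has degree $d+1$, so the first part applies to each row and each column of the $(d+1)\times(d+1)$ array $(\sign\lambda_{ij})$: we get $\sign\lambda_{ij}=\sign\lambda_{i1}\,(-1)^{j-1}$ and $\sign\lambda_{i1}=\sign\lambda_{11}\,(-1)^{i-1}$, whence $\sign\lambda_{ij}=\sign\lambda_{11}\,(-1)^{i+j}$ for all $i,j$. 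Normalizing the global sign of the circuit so that $\lambda_{11}>0$ gives $\sign\lambda_{ij}=(-1)^{i+j}$, as claimed. (Alternatively, one may invoke the tensor structure $\lambda=l\otimes m$ from the corollary after Lemma~\ref{lemma:extprod}, where $l$ and $m$ are the dependences among the $d+1$ moment-curve points on the two sides; each of $l,m$ alternates in sign by the cyclic-polytope argument in the proof of Lemma~\ref{lemma:chsign}, and the product of two alternating sign patterns is $(-1)^{i+j}$ up to a global sign.)

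I do not expect a genuine obstacle here; the only points needing care are (i) ensuring Lemma~\ref{lemma:chsign} is applied to a dependence whose support meets the vertex in question — automatic for the circuits the corollary is about — and (ii) being explicit that the dependence of a circuit is defined only up to a nonzero scalar, so the equality $\sign\lambda_{ij}=(-1)^{i+j}$ is to be read after the evident normalization $\lambda_{11}>0$.
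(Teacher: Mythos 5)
Your proof is correct and takes the approach the paper intends: the corollary is stated there without proof as an immediate consequence of Lemma~\ref{lemma:chsign}, and your counting argument --- $d+1$ incident coefficients with at least $d$ sign changes must all be nonzero and strictly alternate, applied to every row and column of the $(d+1)\times(d+1)$ array for the circuit $K_{d+1,d+1}$ --- is exactly the intended fleshing-out. Your caveats about normalizing the circuit's global sign and the alternative via the tensor-product structure from the corollary to Lemma~\ref{lemma:extprod} are both consistent with the surrounding text.
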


Our next result is an analogue of the Morgan-Scott obstruction \cite{MorganScott} and its signed version from \cite[Section 3.2]{CreSan:Multi}, but using hyperconnectivity instead of cofactor rigidity and the cube instead of the octahedron. Consider the bipartite graph obtained removing the perfect matching $11',22',33',44'$ from $K_{4,4}$ in the vertices $\{1,2,3,4\}$ and $\{1',2',3',4'\}$, that we suppose in that order in a line. This is the graph of the cube and it is a ``bipartite basis'' in the hyperconnectivity matroid in dimension 2 (it has one edge less than needed to be a basis, but none of the four diagonals can be added keeping independence).\footnote{This is a difference with bar-and-joint rigidity, in which a cube plus any edge is a basis.}

If we add the diagonal $11'$ to the cube, we get a circuit. In this circuit, all the signs are fixed except $11'$.

\begin{theorem}\label{cube}
	Let $G=K_{4,4} \setminus\{ 22',33',44'\}$. In the hyperconnectivity matroid $\HH_2$ (for positions in linear general position),
	the coefficient of the edge $11'$ in the circuit has the same sign  (respectively, opposite sign) as that of $12'$ if and only if the cross-ratio $(1,2;3,4)$ is less than (respectively greater than) the cross-ratio of $(1',2';3',4')$. In particular, the coefficient of $11'$ vanishes if and only if the two cross-ratios coincide.
\end{theorem}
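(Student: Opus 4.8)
The plan is to make the (unique up to scale) linear dependence among the $13$ rows indexed by $G=K_{4,4}\setminus\{22',33',44'\}$ completely explicit and then read off the signs of its $11'$- and $12'$-coordinates. Such a dependence can be recorded as a $4\times 4$ matrix $\Lambda=(\Lambda_{ij})$, where $\Lambda_{ij}$ is the coefficient of the edge $(i,j)$ and $\Lambda_{22}=\Lambda_{33}=\Lambda_{44}=0$. Inspecting the block of columns attached to a left vertex $i$ shows that the $i$-th row of $\Lambda$ is a linear dependence among the right positions $p'_1,\dots,p'_4$, i.e.\ it lies in the $2$-dimensional space $W_y\subset\R^4$ of such dependences; dually, every column of $\Lambda$ lies in the $2$-dimensional space $W_x$ of linear dependences among $p_1,\dots,p_4$. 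This is the same as saying $\Lambda=B_x\,D\,B_y^{\top}$ for some $2\times 2$ matrix $D$, where $B_x,B_y$ are fixed $4\times 2$ matrices whose columns form bases of $W_x,W_y$.

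Next I would make $W_x,W_y$ concrete. By linear general position the common line of the points misses the origin, so I can choose affine parametrizations $p_i=a+s_i b$ with $s_1<\dots<s_4$ and $p'_j=a'+t_j b'$ with $t_1<\dots<t_4$ (this uses only the hypothesis that the points lie on a line in this order). Then $W_x=\{c:\sum c_i=\sum c_i s_i=0\}$, with basis the two three-term Cramer dependences supported on $\{1,2,3\}$ and $\{1,2,4\}$, so the rows $u_1,\dots,u_4\in\R^2$ of the corresponding $B_x$ are explicit linear forms in the $s_i$ (for instance $u_1=(s_2-s_3,\,s_2-s_4)$, $u_3=(s_1-s_2,0)$, $u_4=(0,s_1-s_2)$), and likewise the rows $v_1,\dots,v_4$ of $B_y$ in the $t_j$, with $\Lambda_{ij}=u_i^{\top}Dv_j$. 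The conditions $\Lambda_{33}=0$ and $\Lambda_{44}=0$ force $D$ to be anti-diagonal, and $\Lambda_{22}=0$ then pins down the ratio $D_{12}:D_{21}$. In particular the space of dependences is one-dimensional, so every coefficient is determined once one of them is fixed (this is the assertion that ``all signs are fixed except $11'$''), and $G$ is a genuine circuit precisely when the resulting $\Lambda_{11}$ is nonzero.

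It then remains to substitute and simplify. A direct computation gives $\Lambda_{12}=(s_2-s_1)(s_4-s_3)(t_3-t_1)(t_4-t_1)$, which is positive under the ordering, and, after collecting terms, $\Lambda_{11}=(s_1-s_4)(s_2-s_3)(t_1-t_4)(t_2-t_3)\,(\chi'-\chi)$, where $\chi=(s_1,s_2;s_3,s_4)$ and $\chi'=(t_1,t_2;t_3,t_4)$ are exactly the cross-ratios $(1,2;3,4)$ and $(1',2';3',4')$. The prefactor of $\chi'-\chi$ is again positive under the ordering, so $\Lambda_{11}$ has the same sign as $\Lambda_{12}$ iff $\chi<\chi'$, the opposite sign iff $\chi>\chi'$, and vanishes iff $\chi=\chi'$, which is the statement.

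The step I expect to be the real work is the last one: recognizing the explicit polynomial one obtains for $\Lambda_{11}$ as a manifestly signed product of coordinate differences times the difference of the two cross-ratios. This is an elementary but slightly delicate determinantal identity, essentially the three-term relation among the $2\times 2$ minors in the $s_i$ crossed with the one in the $t_j$. One should also keep in mind that the sign statement uses the hypothesis that the points occur in the order $1,2,3,4$ (resp.\ $1',2',3',4'$) along their lines, which is exactly what makes both prefactors positive; for points in linear general position with no prescribed order only the order-independent part of the conclusion — that the $11'$-coefficient vanishes iff the two cross-ratios coincide — survives verbatim.
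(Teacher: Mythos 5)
Your proof is correct, and it reaches the paper's conclusion by a somewhat different route. The paper builds the dependence by taking the two $K_{3,3}$ circuits on $\{1,2,3\}\times\{1',2',4'\}$ and $\{1,2,4\}\times\{1',2',3'\}$ --- each an explicit tensor product of three-term point dependences, as in Lemma \ref{lemma:extprod} --- cancelling the common edge $22'$, and then recognizing the resulting coefficient of $11'$ as $(1',2';3',4')-(1,2;3,4)$ up to a positive factor. You instead parametrize the full $4$-dimensional dependence space of $K_{4,4}$ as $\Lambda=B_x D B_y^{\top}$ (which is exactly the tensor-product structure underlying the proof of Theorem \ref{thm:complete_bipartite}) and solve the three linear conditions $\Lambda_{22}=\Lambda_{33}=\Lambda_{44}=0$ for $D$. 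I verified your formulas: with your choice of Cramer bases one gets $D_{11}=D_{22}=0$ from $\Lambda_{33}=\Lambda_{44}=0$, the ratio $D_{12}:D_{21}$ from $\Lambda_{22}=0$, and then $\Lambda_{12}$ and $\Lambda_{11}$ are, up to one common global sign, $(s_2-s_1)(s_4-s_3)(t_3-t_1)(t_4-t_1)$ and $(s_4-s_1)(s_3-s_2)(t_4-t_1)(t_3-t_2)\,(\chi'-\chi)$, matching the paper's answer. A small advantage of your version is that it proves, rather than assumes, the assertions made just before the theorem (that the cube plus $11'$ is a circuit and that all signs other than that of $11'$ are forced); and your closing caveat is accurate: the sign comparison uses that the points occur in the stated order along the line, which is the situation in which the theorem is later applied, while the vanishing criterion is order-independent.
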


\begin{proof}
	Let the position of the vertex $i$ be $x_i$ and that of $i'$ be $y_i$.
	
	To compute the circuit, we will cancel the edge $22'$ between the circuits in the graphs $\{1,2,3\}\times\{1',2',4'\}$ and $\{1,2,4\}\times\{1',2',3'\}$. Each one is a $K_{3,3}$ in which we know how to compute the signs: they are the exterior product of the two dependences. In this case, the dependences themselves have an easy expression: in $\{1,2,3\}$, for example, the coefficients are the distances $23$, $31$ and $12$, respectively.
	
	Applying this to the first graph and normalizing the coefficient of $22'$ to 1, we get the coefficient of $11'$ as
	\[\frac{(x_3-x_2)(y_4-y_2)}{(x_1-x_3)(y_1-y_4)}=\frac{(x_3-x_2)(y_4-y_2)}{(x_3-x_1)(y_4-y_1)}\]
	Doing the same in the second graph, we get analogously
	\[\frac{(x_4-x_2)(y_3-y_2)}{(x_4-x_1)(y_3-y_1)}\]
	The difference between these expressions gives the circuit we want. The sign of $31'$ is the same as in the first dependence (because this edge is not in the second one), and if we normalized $22'$ to 1, $31'$ will also be positive. So here we must have $21'$ negative and $12'$ positive.
	
	Multiplying the coefficient for $11'$ in the dependence by a positive factor, we get
	\[\frac{(y_3-y_1)(y_4-y_2)}{(y_4-y_1)(y_3-y_2)}-\frac{(x_3-x_1)(x_4-x_2)}{(x_4-x_1)(x_3-x_2)}=(1',2';3',4')-(1,2;3,4)\]
	as we wanted.
\end{proof}

Note that the cross-ratio  is the only projective invariant between four points in a projective line. Hence:

\begin{corollary}
	$G=K_{4,4} \setminus\{ 11', 22',33',44'\}$ (that is, the graph of a $3$-cube) is a circuit in $\HH_2$ if, and only if, the two sets of four points are projectively equivalent.
\end{corollary}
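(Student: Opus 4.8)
The plan is to deduce the corollary directly from Theorem~\ref{cube}. Since the cross-ratio is the complete projective invariant of four ordered points on a projective line, the ordered quadruples $(x_1,x_2,x_3,x_4)$ and $(y_1,y_2,y_3,y_4)$ are projectively equivalent if and only if $(1,2;3,4)=(1',2';3',4')$, which by the last sentence of Theorem~\ref{cube} is the case exactly when the coefficient of the edge $11'$ in the circuit supported on $G':=K_{4,4}\setminus\{22',33',44'\}$ vanishes. So everything reduces to showing that $G:=K_{4,4}\setminus\{11',22',33',44'\}$, the $3$-cube, is a circuit in $\HH_2$ precisely when that coefficient is zero.

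The direction ``coefficient nonzero $\Rightarrow$ $G$ is not a circuit'' is the easy one. For points in linear general position $K_{4,4}$ has rank $12$ (Theorem~\ref{thm:complete_bipartite}), so $G'$, with its $13$ edges, has corank one and hence a single circuit. If the coefficient of $11'$ is nonzero this circuit contains $11'$; but then $G=G'\setminus\{11'\}$ can contain no circuit (a circuit inside $G$ would be a circuit of $G'$, hence the unique one, hence contain $11'\notin G$), so $G$ is independent, in particular not a circuit. For the reverse direction, assume the coefficient of $11'$ is zero. The computation in the proof of Theorem~\ref{cube} produces an explicit linear dependence $\omega$ among the rows of $H_2$ indexed by $G'$ with $\omega_{11'}=0$; thus $\omega$ is supported on $G$ and $G$ is dependent. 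To upgrade this to ``$G$ is a circuit'' I must check that $\omega$ has no zero coefficient on $G$ and that $\operatorname{corank}(G)=1$; together these say that $\omega$ (up to scaling) is the unique dependence of $G$ and that it has full support, i.e.\ $G$ is minimal dependent.

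Full support is most transparent after a normalization. As $(x_i)$ and $(y_i)$ are projectively equivalent, Proposition~\ref{prop:invariance} allows us to apply the linear lift of the corresponding projective map to the left vertices and then rescale each left vertex, reaching, without changing the (unoriented) matroid, a configuration in which $p_i=p_i'$ for $i=1,\dots,4$. Now the space of linear dependences among $p_1,\dots,p_4\in\R^2$ is two-dimensional, spanned by some $l$ and $m$, and since the left and right point sets coincide, Lemma~\ref{lemma:extprod} shows that the antisymmetrized tensor $l\wedge m$ is a linear dependence among the rows of $H_2$; its coefficient on the edge $(i,j)$ is $l_im_j-l_jm_i$, which is zero on the diagonal (so the dependence lives on $G$) and, for $i\ne j$, equals up to a fixed nonzero scalar the complementary $2\times2$ minor $\det[\,p_k\ p_l\,]$ with $\{k,l\}=\{1,\dots,4\}\setminus\{i,j\}$, by the Gale duality between $\ker[p_1|\cdots|p_4]$ and the maximal minors of $[p_1|\cdots|p_4]$. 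These minors are nonzero because the $p_i$ are pairwise non-proportional, so $l\wedge m$ has full support $G$. Since $\operatorname{corank}(G)=1$ (the cube being generically independent, a ``bipartite basis'' as recalled before Theorem~\ref{cube}, its corank can only jump to $1$ on the codimension-one locus where the two cross-ratios agree, the corank-$\ge2$ locus having strictly higher codimension), $\omega$ is proportional to $l\wedge m$ and therefore also has full support, so $G$ is a circuit. The final statement of the corollary is the sub-case ``coefficient of $11'$ equals $0$'', i.e.\ equal cross-ratios.

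The main obstacle is the reverse direction, and inside it the two bookkeeping steps just outlined: that the dependence on $G$ has no vanishing coefficient, cleanly dealt with via the reduction to $p_i=p_i'$ and Lemma~\ref{lemma:extprod} (working instead from the explicit expressions in the proof of Theorem~\ref{cube}, only the two ``crossing'' edges $12'$ and $21'$ would need a separate check), and that $\operatorname{corank}(G)$ is exactly one rather than larger, which relies on a genericity argument within the cross-ratio hypersurface.
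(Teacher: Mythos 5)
Your overall route is the same as the paper's: the corollary is read off from Theorem~\ref{cube} together with the fact that the cross-ratio is the complete projective invariant of four collinear points (the paper in fact states the corollary with no further argument). What you add — and what genuinely needs adding if one wants ``circuit'' rather than merely ``dependent'' — is the verification that when the two cross-ratios agree the dependence supported on the cube has full support and is unique up to scale. Your full-support argument is sound: normalizing via Proposition~\ref{prop:invariance} so that $p_i=p_i'$, the wedge $l\wedge m$ of two spanning dependences is a dependence by Lemma~\ref{lemma:extprod}, it vanishes exactly on the diagonal edges, and its off-diagonal coefficients are the complementary $2\times2$ minors $\det[p_k\,p_l]$, nonzero in linear general position.

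The one step that is asserted rather than proved is that the cube has corank exactly $1$. Saying that the corank-$\ge 2$ locus ``has strictly higher codimension'' than the cross-ratio hypersurface is precisely the kind of statement that can fail for degeneracy loci and would need justification; as written it is a gap. It can be closed with the tools already in play: the dependence space of the cube is the intersection of the dependence space of $K_{4,4}$ with the coordinate subspace of vectors vanishing on the four diagonal edges, and by (the proof of) Theorem~\ref{thm:complete_bipartite} the former is exactly $L\otimes M$, where $L,M\subset\R^4$ are the $2$-dimensional spaces of linear dependences of the left and right point sets. After your normalization $L=M$, and a tensor $T\in L\otimes L$ with all four diagonal entries $T_{ii}=0$ decomposes into an antisymmetric part (automatically diagonal-free) and a symmetric part, which is a quadratic form on the $2$-dimensional space $L^*$ vanishing at the four points cut out by the coordinate functionals $e_i|_L$; these are pairwise non-proportional in linear general position, and a nonzero binary quadratic form vanishes on at most two lines, so the symmetric part is zero. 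Hence the intersection is $\Lambda^2L$, which is one-dimensional and spanned by your full-support vector $l\wedge m$, and the cube is a circuit. With this substitution your proof is complete; note also that the same caveat applies, more mildly, to your easy direction, where you should record that $K_{4,4}$ minus three edges of a perfect matching has corank exactly one (equivalently, that the cube is independent for general position, as the paper asserts just before Theorem~\ref{cube}).
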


\section{Hyperconnectivity of bipartized multitriangulations}
\label{sec:hyper-bip}
\subsection{Bipartized multitriangulations and stack polyominoes}
As stated in the introduction, the reduced bipartization of a triangulation is a bipartite graph with $n-k-1$ vertices in each side and with $2kn-k(2k+1) - k(k+1) = 2kn-3k^2-2k = k(2n-2k-2) -k^2$ edges. This is the adequate number of edges for a basis in the bipartite hyperconnectivity matroid, so there are chances that it is a basis, as we will now show. Actually, we will work in a more general framework that will allow us to prove the independence of a larger class of graphs.

Consider the Cartesian product $[n_1]\times[n_2]$ as a poset, taking the increasing order in $[n_1]$ and $[n_2]$. Recall that an order ideal is a subset $S\subset [n_1]\times[n_2]$ so that for all $(a,b)\in S$ we have that $(a',b')\in S$ if $(a',b')\le(a,b)$, that is, $a'\le a$ and $b'\le b$. 

\begin{definition}
A \emph{Ferrers diagram} is an order ideal in the poset  $[n_1]\times[n_2]$.
\end{definition}

Ferrers diagrams are a particular case of \textit{stack polyominoes} \cite{Jonsson}.

In what follows, let $S$ be a Ferrers diagram.
Observe that this condition is equivalent to the existence of positions $x_1<\dots<x_{n_1}$ and $y_1> \dots >y_{n_2}$  of the vertices $a\in [n_1], b\in [n_2]$ of $K_{n_1,n_2}$ along a line such that $x_a < y_b$ if and only if $(x_a,y_b)\in S$. If we consider the elements of $[n_1]\times[n_2]$ as edges in a bipartite graph, and locate the vertex $a$ in the position $x_a$ and $b'$ in the position $y_b$, the edges in $S$ are exactly those going right from $a$ to $b'$.

For the reduced bipartization of a $k$-triangulation, $n_1=n_2=n-k-1$. An edge $(a,b)$ can appear in the bipartized graph if and only if $\{a,n+1-b\}$ is an edge in the original graph, that is, $a<n+1-b$. So the set of valid edges is
\[S_k(n):=\{(a,b):a,b\in[n-k-1], a+b\le n\}\]

For the graph in Figure \ref{fig:bipartization}, we can represent $S_2(7)$ as a Ferrers diagram, formed by the ``possible'' edges, which will contain the edges in the bipartized 2-triangulation itself. Here the edges in the graph are marked with x and the edges in $S_2(7)$ but not in the graph are marked with dots.
\begin{center}
\begin{tikzpicture}
	\node[label=180:1] (p1) at (0,3) {$\bullet$};
	\node[label=180:2] (p2) at (0,2) {$\bullet$};
	\node[label=180:3] (p3) at (0,1) {$\bullet$};
	\node[label=180:4] (p4) at (0,0) {$\bullet$};
	\node[label=0:1'] (p1') at (2,3) {$\bullet$};
	\node[label=0:2'] (p2') at (2,2) {$\bullet$};
	\node[label=0:3'] (p3') at (2,1) {$\bullet$};
	\node[label=0:4'] (p4') at (2,0) {$\bullet$};
	\draw (p3.center)--(p4'.center);
	\draw (p4.center)--(p3'.center);
	\draw (p1.center)--(p1'.center);
	\draw (p3.center)--(p3'.center);
	\draw (p2.center)--(p1'.center);
	\draw (p2.center)--(p4'.center);
	\draw (p4.center)--(p2'.center);
	\draw (p1.center)--(p2'.center);
	\draw (p1.center)--(p4'.center);
	\draw (p4.center)--(p1'.center);
	\draw (p3.center)--(p1'.center);
	\draw (p3.center)--(p2'.center);
\end{tikzpicture}\qquad
\begin{tabular}[b]{c|c|c|c|c|}\hline
	4' & x & x & x & \\ \hline
	3' & $\cdot$ & $\cdot$ & x & x \\ \hline
	2' & x & $\cdot$ & x & x \\ \hline
	1' & x & x & x & x \\ \hline
	 & 1 & 2 & 3 & 4 \\
\end{tabular}
\end{center}

Basically, the reason why the graph contains no 3-crossings, despite its appearance, is that the vertex $4$ should be at the right of $4'$, or, said otherwise, $(4,4)\notin S_2(7)$. This leads us to the following definition.

\begin{definition}
A $(k+1)$-crossing in $S$ is a set of $k+1$ incomparable elements whose supremum (in $[n_1]\times[n_2]$) belongs to $S$, that is, a set $\{(a_i,b_i)\}_{i=1,\dots,k+1} \subset S$ such that $a_i<a_{i+1}$ and $b_i>b_{i+1}$ for all $i$, and $(a_{k+1},b_1)\in S$. If we consider the elements of $S$ as edges in a bipartite graph with the vertices in a parabola and all edges going right, this is equivalent to say that the edges in question cross. A set, or bipartite graph, is $(k+1)$-free if it has no $(k+1)$-crossing. In this way, $(k+1)$-crossings and $(k+1)$-free sets generalize our previous definitions, that are obtained when $S=S_k(n)$, and a (bipartized) $k$-triangulation is a maximal $(k+1)$-free graph in $S_k(n)$.
\end{definition}

We can write $S$ as
\[S=\{(a,b):1\le a\le n_1,1\le b\le s_a\}\]
for adequate integers $n_2\ge s_1\ge s_2\ge\ldots\ge s_{n_1}\ge 1$.

There is a ``greedy'' subset $T$ that is a maximal subset without $(k+1)$-crossings, namely
\[
T_0:=\{(a,b)\in S: a\le k \text{ or } b\ge s_a-k+1  \}.
\]
Jonsson \cite[section 3]{Jonsson} showed the following:
\begin{theorem}\label{free}
For every Ferrers diagram $S$, all maximal $(k+1)$-free sets have exactly the same size as $T_0$.
\end{theorem}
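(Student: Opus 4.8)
The goal is to show that every maximal $(k+1)$-free subset of a Ferrers diagram $S$ has the same cardinality as the greedy set $T_0$. The natural strategy is to count, rather than to argue combinatorially about individual maximal sets. First I would establish the easy direction: $T_0$ is indeed maximal $(k+1)$-free — this amounts to checking that $T_0$ contains no $(k+1)$-crossing (its elements in each column $a\le k$ are unconstrained, and for $a>k$ only the top $k$ entries survive, so one cannot assemble $k+1$ pairwise-incomparable elements whose supremum lies in $S$) and that adding any element $(a,b)\in S\setminus T_0$ creates one (with $a>k$ and $b\le s_a-k$, one finds $k$ elements of $T_0$ in columns $1,\dots,k$ with appropriately decreasing second coordinates sitting above $(a,b)$). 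Then $|T_0| = \sum_{a=1}^{\min(k,n_1)} s_a + k\cdot\max(0,n_1-k)$, a quantity depending only on $S$.

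For the hard direction — that \emph{every} maximal $(k+1)$-free set has this same size — I would use a deletion/flip argument combined with a dimension-count via one of the rigidity matroids introduced later, or, more in the spirit of Jonsson's original approach, an inductive argument on the shape of $S$. The inductive step is the crux: remove the last column $a=n_1$ (or peel off a corner box), relate $(k+1)$-free sets of $S$ to $(k+1)$-free sets of the smaller diagram $S'=\{(a,b)\in S: a<n_1\}$, and show that each maximal $(k+1)$-free set of $S$ restricts to a maximal one of $S'$ with a controlled number of edges $(n_1,b)$ — exactly $\min(k,s_{n_1})$ of them — lost in the restriction. Tracking this requires showing that in any maximal $(k+1)$-free set $T$, the number of edges in column $n_1$ is precisely $\min(k, s_{n_1})$: at most $k$ because more would force a $(k+1)$-crossing within that column's neighborhood (using that the column is as tall as allowed and the rows are nested), and at least that many because otherwise one could add an edge without creating a crossing, contradicting maximality. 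The nestedness condition $s_1\ge s_2\ge\cdots\ge s_{n_1}$ (the Ferrers/order-ideal property) is what makes all these local arguments go through.

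The main obstacle I anticipate is the second part of the column-count: proving that a maximal $(k+1)$-free set \emph{must} use the full quota $\min(k,s_{n_1})$ of edges in the peeled column, and more generally that maximality is preserved under restriction to $S'$. One cannot simply delete a column and hope maximality survives — a priori the restriction could be enlargeable in $S'$ even if $T$ was maximal in $S$. The resolution is to argue the contrapositive: if $T|_{S'}$ admits an added edge $(a,b)$ without a $(k+1)$-crossing in $S'$, then because adding boxes in earlier columns cannot interact with column $n_1$ to create a crossing in $S$ that wasn't already present in $S'$ (again by nestedness), the same edge could be added to $T$ in $S$, a contradiction. Combined with the induction hypothesis $|T|_{S'}| = |T_0'|$ and the identity $|T_0| = |T_0'| + \min(k,s_{n_1})$, this closes the induction. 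An alternative, cleaner route would be to invoke the later result (Theorem~\ref{thm:triangbases} and its Ferrers-diagram generalization) that maximal $(k+1)$-free sets in $S$ are bases of a matroid of the appropriate rank, from which equicardinality is immediate — but since Theorem~\ref{free} is stated here as input to that machinery, the self-contained inductive argument above is the honest one.
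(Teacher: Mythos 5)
The paper does not actually prove this statement; it is quoted verbatim from Jonsson~\cite{Jonsson}, so there is no internal proof to compare against. Judged on its own terms, your proposed induction has a genuine gap: the central claim of the inductive step --- that every maximal $(k+1)$-free set contains exactly $\min(k,s_{n_1})$ elements in the peeled column $n_1$, and that its restriction to $S'$ is again maximal --- is false. Elements sharing a first coordinate are pairwise \emph{comparable}, so they can never participate together in a crossing; hence nothing bounds the number of elements of $T$ in a single column by $k$. Concretely, take $k=1$ and $S=[n]\times[n]$: the set $\{(1,1),(2,1),\dots,(n,1),(n,2),\dots,(n,n)\}$ is a maximal $2$-free set (a maximal chain) with $n$ elements in column $n$, not $\min(1,s_n)=1$, and its restriction to $[n-1]\times[n]$ has only $n-1$ elements and is very far from maximal there. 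So both halves of your column-count argument fail, and the identity $|T|=|T|_{S'}|+\min(k,s_{n_1})$ on which the induction rests does not hold.

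This is not a repairable slip in bookkeeping but the crux of the theorem: the whole difficulty is that maximal $(k+1)$-free sets can distribute their elements across columns in wildly different ways, and one must nevertheless show equicardinality. Jonsson's actual argument (and later proofs via growth diagrams/RSK for fillings of moon polyominoes) is considerably more delicate than peeling a column; a correct inductive scheme must either track how many elements a maximal set \emph{loses} under a carefully chosen reduction that does not preserve columns, or set up a cardinality-preserving bijection (e.g.\ via flips) between maximal sets. Your fallback of invoking Theorem~\ref{thm:triangbases} is, as you note yourself, circular, since that theorem is derived from this one. The easy direction (that $T_0$ is maximal $(k+1)$-free and the computation of $|T_0|$) is fine, modulo the remark that for columns $a>k$ with $s_a<k$ the count is $\min(k,s_a)$ rather than $k$.
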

Observe that the biggest possible size for $T_0$ is $k(n_1+n_2)-k^2$, attained when $(k,n_2), (n_1,k) \in S$; that is, $s_k=n_2$ and $s_{n_1}\ge k$. When this happens we will say that $S$ is \textit{$k$-full}. In what follows, we suppose that $S$ is $k$-full, because the elements of any row or column with less than $k$ vertices will not affect $(k+1)$-freeness. (Essentially, that is what we are doing when we reduce a bipartized graph.)

This result of Jonsson can be interpreted as saying that any induced subgraph of a $(k+1)$-free bipartite graph has the adequate number of edges to be itself independent. In fact, taking a subset of the vertices is equivalent to delete rows or columns of $S$, which will leave us with a Ferrers diagram of $[n_3]\times[n_4]$, for $n_3\le n_1$ and $n_4\le n_2$, to which we can apply the Theorem.

\begin{corollary}
	A maximal $3$-free bipartite graph plus any edge is a basis in the generic $2$-rigidity matroid.
\end{corollary}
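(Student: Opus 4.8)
The plan is to reduce the statement to Laman's combinatorial description of the generic $2$-dimensional bar-and-joint rigidity matroid: a graph $H$ on $m\ge 2$ vertices with exactly $2m-3$ edges is a basis of that matroid if and only if it is $(2,3)$-\emph{sparse}, meaning that every nonempty set $F'$ of its edges spans a vertex set $W'$ with $|F'|\le 2|W'|-3$ (see \cite{GSS}). So let $S$ be the ambient $2$-full Ferrers diagram on $[n_1]\times[n_2]$, let $G\subseteq S$ be a maximal $3$-free bipartite graph, and let $e$ be any edge not in $G$. By Theorem~\ref{free} and the displayed bound $|T_0|\le k(n_1+n_2)-k^2$ (with $k=2$), $G$ has exactly $2(n_1+n_2)-4$ edges, so $G\cup\{e\}$ has $2(n_1+n_2)-3$ edges on $n_1+n_2$ vertices; hence it is enough to prove that $G\cup\{e\}$ is $(2,3)$-sparse, since a $(2,3)$-sparse graph with $2m-3$ edges is $(2,3)$-tight, hence a basis.

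The heart of the argument is a ``slack lemma'': for every $F'\subseteq E(G)$ with vertex support $W'$ one has $|F'|\le 2|W'|-4$ whenever $|W'|\ge 3$ (and $|F'|\le 1=2|W'|-3$ trivially when $|W'|=2$). To prove it I would restrict $S$ to the rows and columns that occur in $W'$. This restriction is again a Ferrers diagram $S'$ on $[n_3]\times[n_4]$, where $n_3$ and $n_4$ are the numbers of left and right vertices in $W'$, and $F'$ stays $3$-free in $S'$: a $3$-crossing of $F'$ inside $S'$ would, since $S'\subseteq S$, lift to a $3$-crossing of $G$ inside $S$, using here the poset notion of $(k+1)$-crossing (the one that ignores spurious crossings, as discussed after Figure~\ref{fig:bipartization}). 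Applying Theorem~\ref{free} and the size bound to $S'$ then gives $|F'|\le |T_0(S')|\le 2(n_3+n_4)-4=2|W'|-4$. With the slack lemma in hand, $(2,3)$-sparsity of $G\cup\{e\}$ follows from a short case split on a nonempty $F'\subseteq E(G\cup\{e\})$ with support $W'$: if $e\notin F'$ then $F'\subseteq E(G)$ and the lemma gives $|F'|\le 2|W'|-3$; if $e\in F'$, write $F'=F''\cup\{e\}$ with $F''\subseteq E(G)$, so either $F''=\emptyset$ and $|F'|=1=2\cdot2-3$, or the lemma applied to $F''$ gives $|F''|\le 2|W(F'')|-4\le 2|W'|-4$ and hence $|F'|\le 2|W'|-3$. (The one corner to watch is $|W(F'')|=2$: then $F''$ is a single edge $f\in G$, so $e\ne f$ forces $e$ to introduce a new vertex, whence $|W'|\ge3$ and $|F'|=2\le 2|W'|-3$.) Thus $G\cup\{e\}$ is $(2,3)$-sparse with $2(n_1+n_2)-3$ edges, so it is $(2,3)$-tight and therefore a basis of the generic $2$-rigidity matroid.

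I expect the main (though purely routine) work to be the bookkeeping: verifying that restricting a Ferrers diagram to a sub-rectangle of rows and columns yields a Ferrers diagram, that the poset notion of crossing is inherited by such restrictions, and that the few degenerate situations --- $1\times1$ restrictions, $F''=\emptyset$, $|W(F'')|=2$ --- are harmless because the inequality $|F'|\le 2|W'|-3$ we actually need survives even where the clean bound $|T_0|\le k(n_1+n_2)-k^2$ does not. There is no conceptual obstacle: the content of the proof is simply that Jonsson's count (Theorem~\ref{free}) leaves exactly one unit of Laman slack on every induced subgraph of a $3$-free bipartite graph, and that single unit is precisely what a re-inserted edge consumes.
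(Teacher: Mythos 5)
Your proof is correct and follows essentially the same route as the paper's: Jonsson's count gives $2(n_1+n_2)-4$ edges for the maximal $3$-free graph, the restriction of the Ferrers diagram to any vertex subset gives the induced-subgraph bound $2(n_3+n_4)-4$, and adding one edge then yields a Laman-tight graph. The paper states this in three lines; your version just spells out the bookkeeping (the $|W'|=2$ degeneracies and the case split on whether $e\in F'$) that the paper leaves implicit.
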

\begin{proof}
	Any maximal $(k+1)$-free set has $2(n_1+n_2)-4$ edges, and any subgraph induced by $n_3+n_4$ vertices has at most $2(n_3+n_4)-4$ edges. Adding any edge to the graph, the resulting graph satisfies the so-called Laman condition, which is equivalent to generic rigidity in dimension $2$.
\end{proof}
This implies, of course, that $3$-free graphs are independent in generic rigidity in dimension $2$. It does not imply that they are independent in generic hyperconnectivity.

However, precisely that will be the main result of this section:
\begin{theorem}\label{thm:bipbases}
	$(k+1)$-free bipartite graphs defined in a set $S$ are independent in the bipartite hyperconnectivity matroid in dimension $k$. Hence, the maximal of these graphs (that have $k(n_1+n_2)-k^2$ edges) are bases.
\end{theorem}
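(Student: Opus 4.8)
The natural approach is induction on $n_1+n_2$, the total number of vertices of the $k$-full Ferrers diagram $S$, with the goal of showing that any $(k+1)$-free bipartite graph $G\subseteq S$ is independent in the generic bipartite $k$-hyperconnectivity matroid $\HH_k$. By Jonsson's count (Theorem~\ref{free}) a maximal such graph has exactly $k(n_1+n_2)-k^2$ edges, which is the rank of $K_{n_1,n_2}$ in $\HH_k$ (Theorem~\ref{thm:complete_bipartite}), so once independence is established for all $(k+1)$-free $G\subseteq S$, the maximal ones are automatically bases. It therefore suffices to treat the maximal case, or equivalently to exhibit, for one convenient maximal $(k+1)$-free $G$, a sequence of vertex-deletions that peels $G$ down while preserving independence, and then observe that \emph{every} maximal $(k+1)$-free set has the same size and (by the flip connectivity / exchange structure of multitriangulations in a Ferrers diagram) the same matroid status.

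Concretely, I would proceed as follows. First, reduce to the $k$-full case as the paper already indicates: rows or columns with fewer than $k$ cells contribute edges at vertices of degree $<k$, which are independent of everything else in any abstract rigidity matroid, so we may strip them and assume $s_k=n_2$, $s_{n_1}\ge k$. Second, find a vertex $v$ of $G$ of degree exactly $k$ whose deletion leaves a smaller $k$-full Ferrers diagram $S'$ with $G\setminus v$ still $(k+1)$-free and maximal in $S'$ — the greedy witness $T_0$ from Jonsson shows such low-degree vertices exist at the "corners" (e.g.\ the vertex indexed $n_1$, or $1'$, has degree exactly $k$ in $T_0$, hence in any maximal $(k+1)$-free set by the size count applied to the induced subgraph on $[n_1-1]\cup[n_2]'$). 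Third — and this is where the bipartite \emph{hyperconnectivity} structure is essential, rather than just the Laman-type counting that gives generic $2$-rigidity in the Corollary — use the bipartite coning/unconing correspondence: by the Coning theorem and its corollary, contracting $\HH_{k}$ of $K_{n_1,n_2}$ by the edges at a degree-$k$ vertex does not simply give $\HH_{k-1}$ of the smaller graph; instead deleting a degree-$k$ vertex corresponds to removing $k$ rows whose supports, after a suitable linear change of coordinates sending the $k$ neighbours' positions to a coordinate subspace, become triangular, so that $G$ is independent in $\HH_k$ for $(n_1,n_2)$ vertices iff $G\setminus v$ is independent in $\HH_k$ for $(n_1-1,n_2)$ vertices (with the neighbour positions played off against the remaining ones). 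This is the step that must be done carefully with the matrix $H(\p)$ of \eqref{H}: the degree-$k$ vertex's rows span a $k$-dimensional space that meets the span of the remaining rows trivially precisely when the $k$ neighbour points are in general position, which holds generically, and the quotient is exactly the hyperconnectivity matrix on the remaining vertices with the neighbour points now only contributing to the column space (not new rows). Iterating, one peels all the way down to a graph on $k$ vertices on one side (or fewer), where $\min\{n_1,n_2\}\le k$ forces independence by Theorem~\ref{thm:complete_bipartite}(1), completing the induction.

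The main obstacle is the third step: making the vertex-deletion-vs-coning bookkeeping rigorous for \emph{non-generic} behaviour is delicate, because a priori deleting a degree-$k$ vertex could merge two rows that were independent before, or the linear change of variables could fail if the $k$ neighbours are not in linearly general position. The clean way around this is to note that we only need the \emph{generic} statement, so we may choose the points of $S$ sufficiently generically throughout and invoke Proposition~\ref{prop:invariance} (linear invariance on one side) to normalize the $k$ neighbours of $v$ to a standard coordinate flat before deleting; then the reduction of $H(\p)|_G$ to a block-triangular form with $H(\p')|_{G\setminus v}$ as the essential block is exact, not approximate. A secondary subtlety is ensuring that after deletion $S'$ is still $k$-full and $G\setminus v$ is still maximal $(k+1)$-free in $S'$ — this is where Jonsson's Theorem~\ref{free} applied to the induced sub-diagram does the work, since it pins down the edge count of the induced subgraph and forces the deleted vertex to have had degree exactly $k$. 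Once these two points are handled, the induction runs cleanly and the "hence bases" clause is immediate from the rank formula.
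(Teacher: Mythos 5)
Your approach is genuinely different from the paper's. The paper proves this theorem algebraically: it identifies the bipartite hyperconnectivity matroid with the algebraic matroid of the determinantal variety $\M_k(S)$ (Theorem~\ref{thm:hyper-algebraic}) and then shows that the $(k+1)$-minors form a Gr\"obner basis for fp-positive weight vectors, with leading terms exactly the $(k+1)$-crossing monomials (Proposition~\ref{indep}, Lemma~\ref{initial}, Theorem~\ref{fp}); independence of $(k+1)$-free sets then follows with no structural analysis of the graphs at all. Your proposal is instead a combinatorial induction peeling off low-degree vertices. That would be an attractive, more elementary alternative if it worked, but it has a genuine gap at its central step.

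The gap is the existence of the vertex you want to delete. Your justification --- that the vertex $n_1$ (or $1'$) has degree exactly $k$ in every maximal $(k+1)$-free set, ``by the size count applied to the induced subgraph on $[n_1-1]\cup[n_2]'$'' --- is false, and the counting argument in fact points the other way: comparing $|G|=k(n_1+n_2)-k^2$ with the maximal size of a $(k+1)$-free set on $[n_1-1]\cup[n_2]'$ gives $\deg(n_1)\ge k$, not $\le k$. The paper's own running example already refutes the claim: in the reduced bipartization of the $2$-triangulation of Figure~\ref{fig:bipartization}, vertex $4=n_1$ has degree $3$ and vertex $1'$ has degree $4$, while $k=2$ (the left degree sequence is $3,2,4,3$). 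Nor is it clear that \emph{some} vertex of degree $\le k$ always exists in a maximal $(k+1)$-free graph: the average degree is $2k-2k^2/(n_1+n_2)<2k$, which does not force a vertex of degree $\le k$, and without one your induction cannot start. (A secondary, repairable confusion: deleting a degree-$k$ vertex is not an instance of the bipartite coning/unconing correspondence, which removes a vertex joined to \emph{all} vertices of the other side and lowers the dimension from $d+1$ to $d$; what you actually need is the standard abstract-rigidity fact, already used in the proof of Theorem~\ref{thm:complete_bipartite}(1), that the edges at a vertex of degree $\le k$ are independent of the remaining rows, with the dimension staying at $k$. Your direct matrix argument for that step is fine.) Unless you can prove that every maximal $(k+1)$-free set in a $k$-full Ferrers diagram has a vertex of degree exactly $k$ whose deletion leaves a maximal $(k+1)$-free set in the smaller diagram, the induction does not close; the paper's Gr\"obner-basis route avoids this combinatorial difficulty entirely, and as a bonus yields the matrix-completion consequences of Theorem~\ref{thm:matroid}.
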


Applying Theorem \ref{hyper-bipartite}, we get that these graphs are independent in the rigidity matroid with the positions of the vertices in two hyperplanes. This has as a consequence the generic rigidity of these graphs.

\begin{corollary}\label{cor:bipbases}
	$(k+1)$-free bipartite graphs defined in a set $S$ are independent in the bar-and-joint rigidity matroid in dimension $k$.
\end{corollary}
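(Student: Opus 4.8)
The plan is to obtain this corollary from Theorem~\ref{thm:bipbases} by transporting independence from the bipartite hyperconnectivity matroid to the bar-and-joint rigidity matroid via Theorem~\ref{hyper-bipartite}, and then from one convenient configuration to a generic one by a semicontinuity argument.

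In detail: I would fix a $(k+1)$-free bipartite graph $G$ on the vertex set $X\dot\cup Y$ defined in $S$, and choose generic positions $p_v\in\R^{k-1}$ for its vertices $v\in X\cup Y$. Since the hyperconnectivity matroid of a point configuration depends only on the images of the points in $\R\proj^{k-1}$ — it is invariant under linear, indeed projective, transformations — the lifted points $(p_v,1)\in\R^k$ are in generic position for hyperconnectivity, hence realize the generic bipartite $k$-hyperconnectivity matroid. Thus Theorem~\ref{thm:bipbases} applies and tells us that the rows of $H(\p)$ indexed by $G$, where $\p$ is the configuration of the points $(p_v,1)$, are linearly independent. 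Now I would invoke Theorem~\ref{hyper-bipartite} with $d=k$: the $k$-dimensional hyperconnectivity matroid of $\{(p_v,1):v\in X\cup Y\}$ coincides with the $k$-dimensional bar-and-joint rigidity matroid of the configuration $\{(p_i,0):i\in X\}\cup\{(p_j,1):j\in Y\}$, which lies on two parallel hyperplanes of $\R^k$. Consequently $G$ is independent in the bar-and-joint rigidity matrix of these particular positions.

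Finally, independence of a prescribed set of rows of the bar-and-joint rigidity matrix is the non-vanishing of at least one maximal minor of that matrix, a polynomial in the vertex coordinates; hence it is a Zariski-open condition on the positions. As it holds at the special configuration produced above, it holds at a generic configuration of $\R^k$, so $G$ is independent in the generic $k$-dimensional bar-and-joint rigidity matroid, as desired. I do not expect any serious obstacle here: the argument is a short composition of statements proved earlier. The only point that deserves a line of justification is the claim, used when applying Theorem~\ref{thm:bipbases}, that points confined to the affine hyperplane $\{x_k=1\}$ of $\R^k$ can still be generic for hyperconnectivity — which is exactly what the projective invariance of $\HH$ guarantees.
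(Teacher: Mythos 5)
Your argument is correct and is essentially the paper's own: Theorem~\ref{thm:bipbases} gives independence in bipartite hyperconnectivity, Theorem~\ref{hyper-bipartite} transfers this to bar-and-joint rigidity for positions on two parallel hyperplanes, and Zariski-openness of independence yields the generic statement. The paper states this in two sentences; you have merely spelled out the same steps, including the (correct) observation that projective/rescaling invariance lets points with last coordinate $1$ realize the generic hyperconnectivity matroid.
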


Theorem \ref{thm:bipbases} and Corollary \ref{cor:bipbases} give Theorem \ref{thm:triangbases} as a particular case. It would be interesting to relate it to the original multitriangulations, but we know no analogue to Corollary \ref{bipfree} that relates the bar-and-joint rigidity of a graph in dimension $2k$ and that of its bipartization in dimension $k$.

\subsection{Bipartite hyperconnectivity as an algebraic matroid}
To take up the proof of Theorem \ref{thm:bipbases}, we need some algebraic background. This is similar to Section 2 of \cite{CreSan:Pfaffians}, but as we are now dealing with bipartite graphs, our matrices have no symmetry, and we have something more similar to the determinantal variety than to the Pfaffian variety. Also, we are only interested in a part of the matrix, that corresponds to the Ferrers diagram $S$.

Let $\M_k(n_1,n_2)\subset \R^{n_1\times n_2}$ be the $k$-th \emph{determinantal variety}, consisting of the $n_1\times n_2$ matrices with rank at most $k$. Let $I_k(n_1,n_2)$ the corresponding ideal, which is generated by the $k+1$-size minors.

\begin{theorem}\label{thm:hyper-algebraic}
	The bipartite generic hiperconnectivity matroid for $n_1+n_2$ vertices in dimension $k$ coincides with the algebraic matroid of $\M_k(n_1,n_2)$.
\end{theorem}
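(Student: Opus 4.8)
The plan is to identify the bipartite hyperconnectivity matrix directly with the Jacobian of the standard parametrization of the determinantal variety, and then read the matroid off Theorem~\ref{thm:algebraic}. First I would recall that every $n_1\times n_2$ matrix of rank at most $k$ factors as a product $AB$ with $A\in\R^{n_1\times k}$ and $B\in\R^{k\times n_2}$, and that conversely every such product has rank at most $k$; hence $\M_k(n_1,n_2)$ is the Zariski closure of the image of the polynomial map $\phi(A,B)=AB$. Writing $a_i\in\R^k$ for the rows of $A$ and $b_j\in\R^k$ for the columns of $B$, the $(i,j)$ entry of $\phi(A,B)$ is $\langle a_i,b_j\rangle$. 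Since the determinantal ideal $I_k(n_1,n_2)$ is prime (a classical fact), $\M_k(n_1,n_2)$ is irreducible and Theorem~\ref{thm:algebraic} applies: a subset $S\subset[n_1]\times[n_2]$ is independent in its algebraic matroid if and only if the coordinate projection $\pi_S(\M_k(n_1,n_2))$ is Zariski dense in $\K^S$, equivalently $\dim\pi_S(\M_k(n_1,n_2))=|S|$.

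The key step is the elementary computation of the Jacobian of $\phi$. Differentiating, one gets $\partial\langle a_i,b_j\rangle/\partial a_{i'}=\delta_{ii'}\,b_j$ and $\partial\langle a_i,b_j\rangle/\partial b_{j'}=\delta_{jj'}\,a_i$. So, ordering the rows of the Jacobian by the pairs $(i,j)$ and grouping its columns into blocks for $a_1,\dots,a_{n_1}$ followed by blocks for $b_1,\dots,b_{n_2}$, the row indexed by $(i,j)$ has the vector $b_j$ in the block of $a_i$, the vector $a_i$ in the block of $b_j$, and zeros elsewhere. Up to changing the sign of one half of the column blocks, which does not affect the linear matroid of the rows, this is exactly the bipartite hyperconnectivity matrix of the configuration $p_i:=a_i$, $p'_j:=b_j$ in $\R^k$. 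Because $\M_k(n_1,n_2)$ is the closure of the image of $\phi$, the projection $\pi_S(\M_k(n_1,n_2))$ is the closure of the image of $\pi_S\circ\phi$, so its dimension equals the generic rank of the rows indexed by $S$ of this matrix.

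It then remains to match up genericity: the rows of a generic $A\in\R^{n_1\times k}$ form a configuration of $n_1$ points in general position in $\R^k$, the columns of a generic $B$ a configuration of $n_2$ such points, and the two configurations are jointly generic; conversely every jointly generic pair of configurations arises this way. Hence the generic rank of the $S$-rows of the Jacobian is precisely the rank of $S$ in the generic bipartite $k$-hyperconnectivity matroid on $n_1+n_2$ vertices, so the two matroids have the same independent sets and therefore coincide. As a consistency check, both then have rank $k(n_1+n_2)-k^2$, in agreement with Theorem~\ref{thm:complete_bipartite} and with $\dim\M_k(n_1,n_2)$.

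The only points requiring genuine care are bookkeeping ones: that the algebraic matroid is insensitive to whether we work over $\R$, $\C$ or $\overline{\mathbb{Q}}$ (true in characteristic zero since the variety is defined over $\mathbb{Q}$), so that Theorem~\ref{thm:algebraic} — stated over an algebraically closed field — can be compared with the real hyperconnectivity matroid; and the standard fact that for a variety equal to the closure of the image of a polynomial map, the dimension of a coordinate projection equals the generic rank of the corresponding rows of the Jacobian of that map. I expect the genericity/field bookkeeping of the last two paragraphs to be the only real subtlety; the heart of the argument — the observation that the Jacobian of $(A,B)\mapsto AB$ \emph{is} the bipartite hyperconnectivity matrix — is a one-line calculation.
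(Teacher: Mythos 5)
Your proposal is correct and follows essentially the same route as the paper: parametrize $\M_k(n_1,n_2)$ as the image of the matrix-product map $(A,B)\mapsto AB$, compute its Jacobian entrywise, and observe that the Jacobian is (up to a harmless sign on one set of column blocks) exactly the bipartite hyperconnectivity matrix of the configuration given by the rows of $A$ and columns of $B$. The paper simply invokes the fact that the algebraic matroid of a parametrized variety is the linear matroid of the Jacobian at a generic point (citing Rosen), whereas you re-derive this via Theorem~\ref{thm:algebraic} and also spell out the genericity and field bookkeeping, which the paper leaves implicit.
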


\begin{proof}
	Recall that if a variety $V\subset \R^N$ is parametrized as the image of a polynomial map $f: \R^M\to \R^N$ then the algebraic matroid of $V$ coincides with the linear matroid of the Jacobian of $f$ at a generic point \cite[Proposition 2.5]{Rosen}.
	
	In our case, $\M_k(n_1,n_2)$ is the image of
	\[T
	:\R^{n_1\times k}\times \R^{k\times n_2}\rightarrow\R^{n_1\times n_2}
	\]
	where $T$ is the matrix product. The entries in the Jacobian of $T$ are as follows. Let $(A,B)\in \R^{n_1\times k}\times \R^{k\times n_2}$ and denote as $a_1,\dots,a_{n_1}$ the rows of $A$ and $b_1,\dots,b_{n_2}$ the columns of $B$. All of them are vectors in $\R^k$. Then:
	\[
	\frac{\partial T(A,B)_{ij}}{\partial a_{rs}}=\delta_{ir}b_{sj},\qquad
	\frac{\partial T(A,B)_{ij}}{\partial b_{st}}=\delta_{jt}a_{is},
	\]
	where $\delta_{ij}=1$ if $i=j$ and 0 otherwise.
	That is, the row for the element $ij$ is exactly the corresponding row of the hyperconnectivity matroid, with the coordinates of $A$ sorted by rows and the coordinates of $B$ by columns.
\end{proof}

Now let $\M_k(S)$ be the restriction of $\M_k(n_1,n_2)$ to a subset $S$ of coordinates that is a Ferrers diagram, and $I_k(S)$ the ideal of this variety (which consists in the $(k+1)$-size minors contained in $S$). Then, the algebraic matroid of $\M_k(S)$, for any $S$, is precisely our bipartite hiperconnectivity matroid, and what we want to prove is that the graphs lacking certain sets of edges (in this case, $(k+1)$-crossings), are independent in the matroid.

We also define $\M_k(n):=\M_k(S_k(n))$ and $I_k(n):=I_k(S_k(n))$. This result is the core of the proofs in \cite{CreSan:Pfaffians} and will be useful again in this setting.

\begin{proposition}\label{indep}
	Let $I\subset K[x_1,\ldots,x_N]$ be a prime ideal, $v$ a weight vector, $\mathcal{B}$ a Gr\"obner basis for $I$ with respect to $v$ and $\mathcal{F}\subset 2^{[N]}$ the support of the leading terms (with respect to $v$) of the polynomials in $\mathcal{B}$. Then, all subsets of $[N]$ that do not contain any element of $\mathcal{F}$ are independent in the algebraic matroid of $I$.
\end{proposition}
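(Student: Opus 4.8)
The plan is to reduce the statement to the defining property of a Gr\"obner basis via the description of the algebraic matroid by elimination ideals. Recall (this is essentially Theorem~\ref{thm:algebraic}(1)) that $S\subseteq[N]$ is independent in the algebraic matroid of the prime ideal $I$ if and only if the coordinate functions $\{x_i:i\in S\}$ are algebraically independent over $K$ in the fraction field of $K[x_1,\dots,x_N]/I$, which in turn happens if and only if the elimination ideal $I\cap K[x_i:i\in S]$ is zero: a nonzero element of that ideal is precisely a nonzero polynomial relation among the $x_i$ on $V(I)$, and conversely. So I would argue by contraposition, assuming a nonzero $f\in I\cap K[x_i:i\in S]$ and producing an element of $\mathcal{F}$ contained in $S$.

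For this I would pass to leading terms. Let $\prec$ be the term order (refining the weight $v$) with respect to which $\mathcal{B}$ is a Gr\"obner basis, so that the leading term of each $g\in\mathcal{B}$ is a single monomial $\ini_\prec(g)$, its support is the corresponding element of $\mathcal{F}$, and $\ini_\prec(I)=\langle \ini_\prec(g):g\in\mathcal{B}\rangle$ is a monomial ideal. Since $f\in I$, its leading monomial $\ini_\prec(f)$ lies in $\ini_\prec(I)$; and since every monomial of $f$ uses only variables indexed by $S$, we have $\Supp(\ini_\prec(f))\subseteq S$. A monomial lying in a monomial ideal is divisible by one of its monomial generators, so $\ini_\prec(g)\mid\ini_\prec(f)$ for some $g\in\mathcal{B}$, whence
\[
\Supp(\ini_\prec(g))\subseteq\Supp(\ini_\prec(f))\subseteq S .
\]
Thus $S$ contains the element $\Supp(\ini_\prec(g))\in\mathcal{F}$, contradicting the hypothesis, and the proposition follows.

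The argument is short and there is no deep obstacle; the one point that needs care is the reading of ``leading term with respect to $v$''. It has to mean the leading \emph{monomial} for a term order refining $v$ (equivalently, the initial form $\ini_v(g)$ in the situations where $\ini_v(I)$ is a monomial ideal), so that $\ini_\prec(I)$ is genuinely monomial and the divisibility step is valid. This is exactly the situation in the intended applications, where $I$ is a coordinate restriction of a determinantal (or Pfaffian) ideal and the weight vector of $(k+1)$-crossings induces a squarefree monomial initial ideal; if one instead used the honest $v$-initial forms for a non-generic $v$ the statement would fail, for instance for $I=\langle x_1-x_2,\,x_2-x_3\rangle$ with $v=(1,1,1)$, where $\{1,3\}$ is dependent but is not flagged by the supports $\{1,2\},\{2,3\}$. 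The remaining ingredients — the equivalence of algebraic independence with triviality of an elimination ideal, and the defining property $\ini_\prec(I)=\langle\ini_\prec(g):g\in\mathcal{B}\rangle$ of a Gr\"obner basis — are entirely standard.
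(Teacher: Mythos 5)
Your proof is correct and follows essentially the same argument as the paper's: contrapose, take a nonzero polynomial in $I$ supported only on the given subset, and use the Gr\"obner basis property to find an element of $\mathcal{B}$ whose leading-term support is contained in that subset. Your caveat that ``leading term with respect to $v$'' must be read as a single monomial (i.e.\ via a term order refining $v$, or a $v$ in the interior of the relevant Gr\"obner cone) is a point the paper leaves implicit, and it is well taken.
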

\begin{proof}
	Suppose that $T$ is dependent in the algebraic matroid. Then there is a polynomial $f\in I$ using only variables in $T$. In particular, $T$ contains the support of the leading term of $f$. As $\mathcal{B}$ is a Gr\"obner basis, this leading term is multiple of a leading term of a polynomial in $\mathcal{B}$, so $T$ contains an element of $\mathcal{F}$, as we wanted.
\end{proof}

That is, to prove independence of the $(k+1)$-free sets, we need weight vectors for which the minors are a Gr\"obner basis of the ideal they generate and that select in each minor the term with the $(k+1)$-crossing. For the standard set $S_k(n)$, it turns out that our ideal is a relabelling of an initial ideal of the Pfaffian ideal.

At this point we need to recall some concepts about Pfaffians. A \emph{Pfaffian of degree $k$} is the square root of  the determinant of an antisymmetric matrix $M$ of size $2k$.
Considering the entries of  $M$ as indeterminates (over a certain field $\K$), the Pfaffian is a homogeneous polynomial of degree $k$ in $\K[x_{i,j}, \{i,j\} \in \binom{[2k]}2]$, with one monomial for each of the $(2k-1)!!$  perfect matchings in $[2k]$. 

For each $n\ge 2k+2$, let $I_k^P(n)$ be the ideal in $\K[x_{i,j}, \{i,j\} \in \binom{[n]}2]$
generated by all the Pfaffians of degree $k+1$.
Let $\Pf{k}{n}\subset\K^{\binom{n}{2}}$ be the corresponding algebraic variety. 
That is, points in $\Pf{k}{n}$ are antisymmetric $n\times n$ matrices  with coefficients in $\K$ and of rank at most $2k$. 
It is well-known and easy to see that $\Pf{1}{n}$ equals the Grassmannian $\Gr2n$ in its Pl\"ucker embedding and, as pointed out in \cite{PacStu}, 
$\Pf{k}{n}$ equals the $k$-th secant variety of it.

For $k=1$,  Pfaffians are a universal Gr\"obner basis of $I_k^P(n)$ \cite{PacStu,SpeStu}. For $k>1$ they are not (see Example 2.11 in \cite{CreSan:Pfaffians}), but they are a Gr\"obner basis for certain choices of monomial orders, such as fp-positive vectors (see Lemma 2.7 there). Together with Proposition \ref{indep}, this proves Theorem \ref{thm:hyperconnectivity}.

Let $\phi$ be the morphism $K[x_{ij}]_{(i,j)\in S_k(n)}\to K[x_{ij}]_{1\le i<j\le n}$ given by $\phi(x_{ij})=x_{i,n+1-j}$. That is, a relabelling that reverses the order of the second indices.

\begin{lemma}\label{initial}
	Let $I_k^P(n)$ the ideal generated by the Pfaffians of degree $k+1$ and $v_P$ the weight vector that assigns to each edge $(a,b)$ weight equal to $b-a$. Then, $\phi(I_k(n))=\ini_{v_P}(I_k^P(n))$.
\end{lemma}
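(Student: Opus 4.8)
The statement to prove is Lemma~\ref{initial}: under the relabelling $\phi(x_{ij})=x_{i,n+1-j}$, the ideal $\phi(I_k(n))$ generated by the $(k+1)$-minors supported on $S_k(n)$ equals the initial ideal $\ini_{v_P}(I_k^P(n))$, where $v_P$ assigns weight $b-a$ to the edge $(a,b)$. The natural strategy has two halves. First I would identify, for a single $(k+1)$-minor and a single degree-$(k+1)$ Pfaffian, what $\phi$ and the weight $v_P$ do to them; then I would promote this local correspondence to an equality of ideals using the fact (cited just before the lemma, via \cite{CreSan:Pfaffians}) that the Pfaffians of degree $k+1$ form a Gr\"obner basis of $I_k^P(n)$ with respect to $v_P$ (more precisely, with respect to fp-positive weight vectors, of which $v_P$ should be checked to be an instance, or a relevant specialization).

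**Key steps.** Step 1: record that a degree-$(k+1)$ Pfaffian of an antisymmetric $n\times n$ matrix of indeterminates, indexed by a $(2k+2)$-subset $\{c_0<c_1<\dots<c_{2k+1}\}$ of $[n]$, is $\sum_{\text{perfect matchings } M}\pm\prod_{\{a,b\}\in M}x_{ab}$; its $v_P$-weight on the monomial coming from matching $M$ is $\sum_{\{a,b\}\in M}(b-a)$, which — since each $c_i$ appears in exactly one pair, with sign $-$ if it is the smaller element of its pair and $+$ if larger — depends only on which elements are "left endpoints". The unique matching maximizing (or minimizing, depending on sign convention of $\ini$) this weight is the one pairing $c_i$ with $c_{2k+1-i}$, i.e. the "maximal crossing" matching; this is where the $(k+1)$-crossing appears. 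So $\ini_{v_P}$ of this Pfaffian is (up to sign) the monomial $\prod_{i=0}^{k} x_{c_i,c_{2k+1-i}}$. Step 2: apply $\phi^{-1}$ (which reverses second indices via $j\mapsto n+1-j$): the monomial $\prod x_{c_i,c_{2k+1-i}}$ pulls back to $\prod x_{c_i,\,n+1-c_{2k+1-i}}$, and one checks the index set $\{(c_i,\,n+1-c_{2k+1-i})\}_{i=0}^k$ is a $(k+1)$-crossing inside $S_k(n)$ in the Ferrers-diagram sense — indeed $c_0<\dots<c_{k}$ increasing and $n+1-c_{2k+1}<\dots<n+1-c_{k+1}$ also gives the required monotonicity, and $c_i + (n+1-c_{2k+1-i})\le n$ iff $c_i\le c_{2k+1-i}$, which holds since $i\le 2k+1-i$ here wait — this needs care, see below. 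Conversely, a $(k+1)$-minor supported on $S_k(n)$, i.e. rows $a_0<\dots<a_k$ and columns $b_0<\dots<b_k$ with all $(a_p,b_q)\in S_k(n)$, has its "crossing" diagonal term $\prod_p x_{a_p,b_{k-p}}$ mapping under $\phi$-reversal to a term that is $\ini_{v_P}$ of the Pfaffian on $\{a_0,\dots,a_k\}\cup\{n+1-b_0,\dots,n+1-b_k\}$ (the condition $(a_p,b_q)\in S_k(n)$, i.e. $a_p+b_q\le n$, is exactly what makes these $2k+2$ indices distinct and correctly interleaved). Step 3: conclude that $\phi$ carries the generating set $\{(k+1)\text{-minors in }S_k(n)\}$ of $I_k(n)$ onto $\{\ini_{v_P}(\mathrm{Pf})\}$, the leading-term generators of $\ini_{v_P}(I_k^P(n))$; since the Pfaffians are a Gr\"obner basis, $\ini_{v_P}(I_k^P(n))$ is generated by exactly those leading terms, giving $\phi(I_k(n))=\ini_{v_P}(I_k^P(n))$.

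**The main obstacle.** The combinatorial bookkeeping in Steps 1--2 is the delicate part: one must verify (i) that $v_P$ genuinely selects the "maximal crossing" term of each Pfaffian — this is a statement about which perfect matching on $\{c_0<\dots<c_{2k+1}\}$ has extremal "left-endpoint weight," and requires pinning down the sign convention for $\ini$ so that it matches the convention in which $(k+1)$-crossings are the selected non-faces; (ii) that $\phi$ establishes a \emph{bijection} between $(k+1)$-minors-in-$S_k(n)$ and degree-$(k+1)$ Pfaffians (not just a map), so that no generators are lost or doubled — this is where the precise shape of $S_k(n)=\{(a,b): a,b\in[n-k-1],\ a+b\le n\}$ matters, and the index shift $n+1-b$ has to be chased carefully through the inequality $a+b\le n \iff a\le n-b < (n+1)-b$ so that the reversed column indices land in $\{1,\dots,n\}$ and interleave correctly with the row indices. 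A secondary point to make explicit is that $v_P$ (or the restriction of the fp-positive cone meeting it) is covered by the Gr\"obner-basis result cited from \cite{CreSan:Pfaffians}; if $v_P$ itself is only on the boundary of that cone, one argues by a small generic perturbation within the cone, using that the initial ideal is constant on the relative interior of each Gr\"obner cone and that the monomial initial terms identified above are unchanged.
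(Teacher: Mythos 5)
Your overall architecture --- analyze the $v_P$-weight of each matching term of a Pfaffian, match minors supported on $S_k(n)$ bijectively with degree-$(k+1)$ Pfaffians via $\phi$, and use the Gr\"obner-basis property for the reverse inclusion --- is the same as the paper's. But Step~1 contains a genuine error that breaks the argument. As you yourself observe, the weight $v_P(f_1)=\sum_j(b_j-a_j)$ of a matching term depends only on which of the $2k+2$ indices $c_0<\dots<c_{2k+1}$ are left endpoints and which are right endpoints; consequently the maximum is attained not by a unique matching but by \emph{all} $(k+1)!$ matchings pairing each element of the lower half $\{c_0,\dots,c_k\}$ with an element of the upper half $\{c_{k+1},\dots,c_{2k+1}\}$. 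Hence $\ini_{v_P}$ of the Pfaffian is not the single crossing monomial $\prod_i x_{c_i,c_{2k+1-i}}$ but the entire $(k+1)\times(k+1)$ minor with rows $\{c_0,\dots,c_k\}$ and columns $\{c_{k+1},\dots,c_{2k+1}\}$. This is not a sign-convention issue: if the initial forms were monomials, then $\ini_{v_P}(I_k^P(n))$ would be a monomial ideal and could not equal $\phi(I_k(n))$, which is generated by minors; your Step~3 would then establish something contradicting the very statement you are proving. The point of the lemma is precisely that $v_P$ is a \emph{degenerate} weight on the boundary of the crossing Gr\"obner cone whose initial forms are minors; the crossing monomials are only isolated after a further refinement by a second weight $\phi(v)$, as in Corollary~\ref{grobcone}.

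Two secondary remarks. First, your fallback for the Gr\"obner-basis issue (``perturb $v_P$ into the interior of the cone, where the initial ideal is constant'') does not work as stated: the initial ideal is \emph{not} constant as one moves from $v_P$ into the interior of the fp-positive cone --- it jumps from the minor-generated ideal to the monomial ideal of $(k+1)$-crossings. The usable tool is the identity $\ini_w(\ini_{v_P}(I))=\ini_{v_P+\epsilon w}(I)$ for small $\epsilon>0$, from which the Gr\"obner property at $v_P$ is deduced from the one at the perturbed interior vector. Second, your Step~2 bookkeeping is essentially right and matches the paper: a minor contained in $S_k(n)$ has $a_{k+1}+b_{k+1}\le n$, i.e.\ $a_{k+1}<n+1-b_{k+1}$, so after relabelling the row indices all lie below the column indices and the $2k+2$ indices form the support of a Pfaffian, and conversely.
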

\begin{proof}
	It is enough to see that the initial form of a Pfaffian of degree $k+1$, respect to $v_P$, coincides with the image by $\phi$ of a size $k+1$ minor contained in $S_k(n)$, and all the possible minors are obtained this way. Let $f$ be the Pfaffian of $\{i_1,\ldots,i_{2k+2}\}$, with $i_1<\ldots<i_{2k+2}$. Then, every term $f_1$ in $f$ corresponds to a perfect matching in this set: $\{\{a_1,b_1\},\ldots,\{a_{k+1},b_{k+1}\}\}$.
	\[
	v_P(f_1)=\sum_{j=1}^{k+1}(b_j-a_j)\le\sum_{j=k+2}^{2k+2}i_j-\sum_{j=1}^{k+1}i_j
	\]
	with equality if and only if $\{a_1,\ldots,a_{k+1}\}=\{i_1,\ldots,i_{k+1}\}$ and $\{b_1,\ldots,b_{k+1}\}=\{i_{k+2},\ldots,i_{2k+2}\}$. The terms that attain this maximal weight are exactly the permutations in the minor for the rows $\{i_1,\ldots,i_{k+1}\}$ and columns $\{i_{k+2},\ldots,i_{2k+2}\}$, so that minor is the initial form of $f$. As all the column indices are greater than $k+1$ and $i_{k+2}>i_{k+1}$, this minor is the image by $\phi$ of a minor contained in $S_k(n)$.
	
	On the other hand, if we start with a minor contained in $S_k(n)$, after applying $\phi$, the first column is greater than the last row, and as already shown, it is the initial form of a Pfaffian.
\end{proof}

Recall that the algebraic matroid of the Pfaffian ideal $I_k^P(n)$ coincides with the generic hyperconnectivity matroid of $n$ vectors in dimension $2k$ (see, e.g., \cite{CreSan:Pfaffians}). With this in mind, Proposition \ref{indep} and Lemma~\ref{initial}  have the following consequence, which is highly non-trivial when looking at the matrices defining hyperconnectivity.

\begin{corollary}\label{bipfree}
	If the bipartization of $E$ is free in the generic hyperconnectivity matroid in dimension $k$, then $E$ is free in the generic hyperconnectivity matroid in dimension $2k$.
\end{corollary}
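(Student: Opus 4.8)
The plan is to drop the hyperconnectivity matrices entirely and argue via algebraic matroids, reducing the statement to the elementary fact that passing to an initial ideal can only create new algebraic dependencies, never destroy existing ones — so that freeness propagates \emph{from} an initial ideal back to the original ideal.

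Let $E'$ denote the bipartization of $E$. A one-line computation shows $E'$ lies inside a Ferrers diagram $S$ of $[n-1]\times[n-1]$, since the edge $\{i,j\}$ with $i<j$ maps to $(i,n+1-j)$ and $i+(n+1-j)=n+1-(j-i)\le n$. By Theorem~\ref{thm:hyper-algebraic} in its Ferrers-diagram form (the algebraic matroid of $\M_k(S)$), freeness of $E'$ in the generic bipartite $k$-hyperconnectivity matroid is equivalent to freeness of $E'$ in the algebraic matroid of the prime ideal $I_k(S)$, generated by the $(k+1)$-minors contained in $S$. Transport this along the relabelling $\phi\colon x_{ij}\mapsto x_{i,n+1-j}$, a ring isomorphism carrying the variables of $E'$ bijectively onto those of $E$. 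Every $(k+1)$-minor contained in $S$ actually fits inside $S_k(n)$ — its $k+1$ columns force $\max(\text{cols})\ge k+1$, hence $\max(\text{rows})\le n-k-1$, and symmetrically — so Lemma~\ref{initial} applies and gives $\phi(I_k(S))=\ini_{v_P}(I_k^P(n))$. The hypothesis has thus become: $E$ is free in the algebraic matroid of $\ini_{v_P}(I_k^P(n))$.

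Now I would conclude in two short steps. First, $E$ is free in the algebraic matroid of $I_k^P(n)$ itself: contrapositively, a nonzero $f\in I_k^P(n)$ with $\Supp(f)\subseteq E$ would produce a nonzero $\ini_{v_P}(f)\in\ini_{v_P}(I_k^P(n))$ with $\Supp(\ini_{v_P}(f))\subseteq\Supp(f)\subseteq E$, contradicting the hypothesis. Second, the algebraic matroid of $I_k^P(n)$ is exactly the generic hyperconnectivity matroid of $n$ vectors in dimension $2k$, so $E$ is free there. I expect the only point needing genuine care to be the combinatorial bookkeeping in the middle paragraph: identifying the Ferrers diagram in which $E'$ lives and matching $I_k(S)$ with the $v_P$-initial ideal of the Pfaffians exactly as in Lemma~\ref{initial}, given that for a general $E$ (for instance a $k$-triangulation, with its short and boundary edges) the full bipartization overflows the diagram $S_k(n)$ used there. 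Once that dictionary is in place, the corollary is just the initial-ideal degeneration observation.
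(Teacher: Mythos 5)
Your proposal is correct and is essentially the paper's own argument run in the contrapositive direction: both proofs take a dependence $f\in I_k^P(n)$ supported on $E$, pass to $\ini_{v_P}(f)$, and use Lemma~\ref{initial} (together with $\Supp(\ini_{v_P}(f))\subseteq\Supp(f)$) to identify it via $\phi$ with an element of $I_k(n)$ supported on the bipartization. The only difference is bookkeeping: the paper never needs the enlarged Ferrers diagram $S$, since the witness $g\in I_k(n)$ it produces automatically involves only variables of $S_k(n)$, whereas you carry $S$ along and then observe that every $(k+1)$-minor in $S$ already lies in $S_k(n)$ — a correct but optional detour.
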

\begin{proof}
	Let $E$ be dependent in dimension $2k$ and $E_1$ the bipartization of $E$. As we know, $E$ is algebraically dependent in $I_k^P(n)$, so there is a polynomial $f$ in $I_k^P(n)$ using only variables in $E$. Then $\ini_{v_P}(f)=\phi(g)$ where $g$ is a polynomial in $I_k(n)$. If a variable $x_{ij}$ is used in $g$, $\phi(g)$ uses $x_{i,n+1-j}$, so $f$ also uses it and $\{i,n+1-j\}\in E$. By the definition of bipartization, $(i,j)\in E_1$. Hence, all the variables in $g$ are in $E_1$, and $E_1$ is algebraically dependent in $I_k(n)$, that is, it is dependent in dimension $k$.
\end{proof}

However, the converse is not true: $K_{2k+2}\setminus\{1,2\}$ is a basis in dimension $2k$ and its reduced bipartization is $K_{k+1,k+1}$, which is a circuit in dimension $k$.

In what follows, we will call $\Grob_k(S)$ the Gr\"obner cone of the $(k+1)$-crossing terms of $I_k(S)$ (that is, the cone of the vectors for which the $(k+1)$-crossing attains the maximum weight in each $A$ and $B$ with size $k+1$), and $\Grob_k(n)$ analogously. As a slight abuse of notation, we will call $\phi$ the linear morphism $\R^{S_k(n)}\to\R^\bnn$ that permutes the coordinates in the same way as $\phi$ (and fills with an arbitrary value, such as 0, the rest of coordinates in $\bnn$).

\begin{corollary}\label{grobcone}
	$v\in\Grob_k(n)\Leftrightarrow v_P+\epsilon \phi(v)\in\Grob_k^P(n)$ for $\epsilon$ small enough. Also, for weights in the interior of the cone, the $(k+1)$-size minors are a Gr\"obner basis. 
\end{corollary}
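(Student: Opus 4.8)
The plan is to deduce the corollary from Lemma~\ref{initial} together with the standard fact that, for any ideal $I$ and weight vectors $w,w'$, one has $\ini_{w+\epsilon w'}(I)=\ini_{w'}(\ini_w(I))$ once $\epsilon>0$ is small enough (how small may depend on $I$, $w$, $w'$, but that is harmless since there are finitely many Pfaffians of degree $k+1$). Applying this with $w=v_P$, $w'=\phi(v)$, and invoking Lemma~\ref{initial}, we get
\[
\ini_{v_P+\epsilon\phi(v)}(I_k^P(n))=\ini_{\phi(v)}(\ini_{v_P}(I_k^P(n)))=\ini_{\phi(v)}(\phi(I_k(n)))=\phi(\ini_v(I_k(n))),
\]
so for $\epsilon$ small the two initial ideals correspond under the relabelling $\phi$. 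This displayed identity is what both assertions rest on.

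For the equivalence I would argue term by term on a single Pfaffian $f$ of degree $k+1$, say on $\{i_1<\dots<i_{2k+2}\}$. By the computation in the proof of Lemma~\ref{initial}, the monomials of $f$ of maximal $v_P$-weight are precisely the monomials of $\ini_{v_P}(f)$, and $\phi$ identifies these with the monomials of a single $(k+1)$-minor $m_f$ contained in $S_k(n)$ (on rows $\{i_1,\dots,i_{k+1}\}$ and columns $\{n+1-i_{2k+2},\dots,n+1-i_{k+2}\}$), under which the $(k+1)$-crossing matching term of $f$ goes to the anti-diagonal term of $m_f$, i.e.\ to the $(k+1)$-crossing of the Ferrers diagram. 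All other monomials of $f$ have strictly smaller $v_P$-weight, hence for $\epsilon$ small they are strictly dominated by the crossing monomial under $v_P+\epsilon\phi(v)$ regardless of $v$. So $v_P+\epsilon\phi(v)$ makes the crossing monomial weakly (resp.\ strictly) maximal in $f$ if and only if $v$ makes the $(k+1)$-crossing weakly (resp.\ strictly) maximal in $m_f$. Since the proof of Lemma~\ref{initial} also shows that, as $f$ ranges over all degree-$(k+1)$ Pfaffians, the minors $m_f$ range over \emph{all} $(k+1)$-minors contained in $S_k(n)$, this gives $v_P+\epsilon\phi(v)\in\Grob_k^P(n)\iff v\in\Grob_k(n)$, and likewise for the interiors.

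For the Gr\"obner-basis statement, let $v$ be interior to $\Grob_k(n)$ and put $w=v_P+\epsilon\phi(v)$ with $\epsilon$ small. By the previous paragraph $w$ is interior to $\Grob_k^P(n)$, so $\ini_w(f)$ is the single crossing monomial of each degree-$(k+1)$ Pfaffian $f$; by \cite[Lemma~2.7]{CreSan:Pfaffians} the degree-$(k+1)$ Pfaffians form a Gr\"obner basis of $I_k^P(n)$ with respect to $w$, hence $\ini_w(I_k^P(n))=\langle\ini_w(f)\rangle$. Pulling this back through $\phi$ via the displayed identity, $\ini_v(I_k(n))=\langle\,\ini_v(m): m \text{ a }(k+1)\text{-minor contained in }S_k(n)\,\rangle$; since these minors generate $I_k(n)$, they are a Gr\"obner basis of $I_k(n)$ with respect to $v$, which is the claim.

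I expect the only real obstacles to be bookkeeping rather than conceptual: keeping the bijection between perfect matchings of the $(2k+2)$-set and terms of the associated $(k+1)\times(k+1)$ minor straight, and checking that the crossing matching really maps to the anti-diagonal — this is essentially already in Lemma~\ref{initial}, so what remains is to make the correspondence of cones (and of their interiors) precise. A secondary point to be careful about is that \cite[Lemma~2.7]{CreSan:Pfaffians} is phrased for fp-positive weight vectors, so one must observe that the perturbed weights $v_P+\epsilon\phi(v)$ with $v$ interior to $\Grob_k(n)$ are of that form (equivalently, that the interior of $\Grob_k^P(n)$ consists of fp-positive vectors).
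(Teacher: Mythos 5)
Your proposal is correct and takes essentially the same route as the paper: both arguments rest on Lemma~\ref{initial} combined with the identity $\ini_{v_P+\epsilon\phi(v)}(I_k^P(n))=\ini_{\phi(v)}(\ini_{v_P}(I_k^P(n)))=\phi(\ini_v(I_k(n)))$ for small $\epsilon$, and both deduce the Gr\"obner-basis claim by transferring, through $\phi$, the fact from \cite{CreSan:Pfaffians} that the Pfaffians form a Gr\"obner basis for the perturbed weight. Your term-by-term comparison of each Pfaffian with its associated minor simply makes explicit one link in the paper's chain of equivalences (and correctly handles the weak versus strict inequalities), so there is no substantive difference.
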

\begin{proof}
	For the first part,
	\begin{align*}
		v\in\Grob_k(n) & \Leftrightarrow \ini_v(I_k(n)) \text{ is the ideal of (bipartite) $(k+1)$-crossings} \\
		& \Leftrightarrow \phi(\ini_v(I_k(n))) \text{ is the ideal of (circular) $(k+1)$-crossings} \\
		& \Leftrightarrow \ini_{\phi(v)}(\phi(I_k(n))) \text{ is the ideal of $(k+1)$-crossings} \\
		& \Leftrightarrow \ini_{\phi(v)}(\ini_{v_P}(I_k^P(n))) \text{ is the ideal of $(k+1)$-crossings} \\
		& \Leftrightarrow \ini_{v_P+\epsilon \phi(v)}(I_k^P(n)) \text{ is the ideal of $(k+1)$-crossings} \\
		& \Leftrightarrow v_P+\epsilon \phi(v)\in\Grob_k^P(n)
	\end{align*}
For the second, let $f\in I_k(n)$ and $v$ an interior vector of the cone. We need to see that the leading term $f_0$ of $f$ (that is a single monomial because $v$ is interior) is multiple of a $(k+1)$-crossing monomial. As $\phi(I_k(n))$ is the initial ideal of $I_k^P(n)$ with weight vector $v_P$, $\phi(f)=\ini_{v_P}(g)$ for a $g\in I_k^P(n)$. That is, 
\[\phi(f_0)=\ini_{\phi(v)}(\phi(f))=\ini_{\phi(v)}(\ini_{v_P}(g))=\ini_{v_P+\epsilon \phi(v)}(g)\]
for $\epsilon$ small enough. Now we know that $v_P+\epsilon \phi(v)\in\Grob_k^P(n)$ and by Theorem 2.7 in \cite{CreSan:Pfaffians}, Pfaffians are a Gr\"obner basis with respect to $v_P+\epsilon \phi(v)$, so $\phi(f_0)$ is multiple of a circular $(k+1)$-crossing monomial, and $f_0$ is multiple of a bipartite one.
\end{proof}

Now the arguments will follow a similar path than in the case of Pfaffians: fp-positive weight vectors are in the Gr\"obner cone.

\begin{definition}
	For a Ferrers diagram $S$ of $[n_1]\times[n_2]$, let $\overline{S}$ be the Ferrers diagram of $[n_1-1]\times[n_2-1]$ obtained by deleting the elements of the form $(1,j)$ or $(i,1)$ and decrementing the rest of indices.
	
	Let $\delta:\R^S\to\R^{\overline{S}}$ given by
	\[\delta(v)_{ij}=v_{i,j+1}+v_{i+1,j}-v_{ij}-v_{i+1,j+1}\]
\end{definition}

\begin{proposition}\label{coor}
	$\delta(v)$ determines $v$ except for a constant term added to each row or column:
	\[v_{ij}=v_{i1}+v_{1j}-v_{11}-\sum_{r=1}^{i-1}\sum_{s=1}^{j-1}\delta(v)_{rs}\]
\end{proposition}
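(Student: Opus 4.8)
The statement is a purely formal telescoping identity, so the plan is to compute the double sum $\sum_{r=1}^{i-1}\sum_{s=1}^{j-1}\delta(v)_{rs}$ directly and observe that it collapses to four boundary terms. First I would rewrite the summand as a first difference in the second index,
\[
\delta(v)_{rs}=\bigl(v_{r,s+1}-v_{r,s}\bigr)-\bigl(v_{r+1,s+1}-v_{r+1,s}\bigr),
\]
which is immediate from the definition of $\delta$. Summing over $s$ from $1$ to $j-1$ telescopes each of the two parentheses, giving
\[
\sum_{s=1}^{j-1}\delta(v)_{rs}=\bigl(v_{r,j}-v_{r,1}\bigr)-\bigl(v_{r+1,j}-v_{r+1,1}\bigr).
\]
This is itself a first difference in $r$ of the quantity $f(r):=v_{r,j}-v_{r,1}$, so summing over $r$ from $1$ to $i-1$ telescopes again and yields
\[
\sum_{r=1}^{i-1}\sum_{s=1}^{j-1}\delta(v)_{rs}=f(1)-f(i)=\bigl(v_{1,j}-v_{1,1}\bigr)-\bigl(v_{i,j}-v_{i,1}\bigr),
\]
which rearranges to exactly the asserted formula. (Note that for $i=1$ or $j=1$ both sides reduce to a tautology, so the empty-sum cases are consistent.)

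Two small points should be checked along the way. One is that every cell appearing in the intermediate sums genuinely lies in $S$: since $S$ is an order ideal of $[n_1]\times[n_2]$ and $(i,j)\in S$, every cell $(r,s)$ with $r\le i$ and $s\le j$ belongs to $S$; in particular $(i,1),(1,j),(1,1)\in S$ and every $\delta(v)_{rs}$ with $r\le i-1$, $s\le j-1$ (whose definition involves $(r+1,s+1)$) is defined, so no term refers to an undefined coordinate. The other is the interpretation ``$\delta(v)$ determines $v$ up to a constant added to each row or column'': the formula just obtained shows $v$ is reconstructed from $\delta(v)\in\R^{\overline S}$ together with the boundary data $(v_{i1})_i$ and $(v_{1j})_j$; conversely, if $v'_{ij}=v_{ij}+c_i+d_j$ for scalars $c_1,\dots,c_{n_1},d_1,\dots,d_{n_2}$, then each $c_i$ and each $d_j$ occurs once with a $+$ and once with a $-$ sign in $\delta(v')_{rs}$, hence $\delta(v')=\delta(v)$. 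Thus the fibers of $\delta$ are precisely the cosets of the space of ``row-plus-column'' functions, which is the claimed freedom.

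I do not expect any genuine obstacle here: the argument is the two-step telescoping above together with the bookkeeping remark. The only mild subtlety worth a sentence is that, because $S$ need not be a rectangle, one must invoke the order-ideal property of $S$ to know that all cells used in the reconstruction formula actually belong to $S$; this is why the hypothesis that $S$ is a Ferrers diagram is used, and I would present the proof as a short direct computation rather than a structured one.
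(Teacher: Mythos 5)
Your proof is correct: the double telescoping computation verifies the stated identity, your check that the order-ideal property of $S$ guarantees all referenced cells lie in $S$ is the right bookkeeping, and your identification of the fibers of $\delta$ with cosets of row-plus-column functions justifies the "up to constants" phrasing. The paper states Proposition~\ref{coor} without any proof (treating it as immediate from the definition of $\delta$), so your argument simply makes explicit the computation the paper leaves to the reader.
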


\begin{theorem}\label{fp1}
	Let $v\in\R^S$ be a weight vector for the variables of $I_k(S)$. The following are equivalent:
	\begin{enumerate}
		\item All coordinates of $\delta(v)$ are nonnegative.
		\item For $(i,j),(i',j')\in S$, $i<i'$ and $j<j'$, $v_{ij'}+v_{i'j}\ge v_{ij}+v_{i'j'}$.
		\item For any subsets $A\subset[n_1]$ and $B\subset[n_2]$ with the same size such that $A\times B\subset S$, the weights given by $v$ to perfect matchings between them are monotone with respect to swaps that increase crossings.
		\item For any subsets $A\subset[n_1]$ and $B\subset[n_2]$ with the same size such that $A\times B\subset S$, the maximum weight given by $v$ to perfect matchings between them is attained at the crossing.
	\end{enumerate}
A vector $v$ satisfying any of these conditions will be called four-point positive or fp-positive.
\end{theorem}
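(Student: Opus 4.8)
The plan is to prove the four statements equivalent by closing the cycle $(2)\Rightarrow(1)\Rightarrow(2)\Rightarrow(3)\Rightarrow(4)\Rightarrow(2)$. The single most useful remark is that $\delta(v)_{ij}\ge 0$ is \emph{literally} condition (2) in the special case of consecutive indices $i'=i+1$, $j'=j+1$; the four entries involved automatically lie in $S$ because $S$ is an order ideal. Thus $(2)\Rightarrow(1)$ requires nothing, and the whole content of $(1)\Rightarrow(2)$ is a telescoping identity.

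For $(1)\Rightarrow(2)$: given $(i,j),(i',j')\in S$ with $i<i'$ and $j<j'$, the order-ideal property of $S$ forces the entire combinatorial rectangle $\{i,\dots,i'\}\times\{j,\dots,j'\}$ to lie in $S$, so every $\delta(v)_{rs}$ with $i\le r<i'$, $j\le s<j'$ is defined. Writing $\delta(v)_{rs}=(v_{r,s+1}-v_{r,s})-(v_{r+1,s+1}-v_{r+1,s})$ and summing first over $r$ and then over $s$, both sums telescope and give
\[
\sum_{r=i}^{i'-1}\sum_{s=j}^{j'-1}\delta(v)_{rs}=v_{ij'}+v_{i'j}-v_{ij}-v_{i'j'},
\]
which is nonnegative under (1). (This is essentially the inversion formula of Proposition~\ref{coor}, restricted to a rectangle.)

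For $(2)\Leftrightarrow(3)$ and $(3)\Rightarrow(4)\Rightarrow(2)$: in the line picture two edges $(a,b)$, $(a',b')$ with $a<a'$ cross exactly when $b>b'$, and exchanging the partners of $a$ and $a'$ changes the weight of a matching by precisely $v_{ab'}+v_{a'b}-v_{ab}-v_{a'b'}$, with no contribution from the remaining edges. A short case analysis shows that such a swap strictly increases the total number of crossings only if the two edges were non-crossing beforehand, i.e.\ only if $b<b'$; in that case the weight change is exactly the left-minus-right side of the inequality in (2). Hence (2) says precisely that crossing-increasing swaps never decrease weight, which is (3); conversely, (3) applied to $A=\{i,i'\}$, $B=\{j,j'\}$ (a rectangle lying in $S$ by the order-ideal property) recovers (2). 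Given (3) and any $A,B$ of equal size with $A\times B\subset S$: whenever a perfect matching between them is not the all-crossing one, some adjacent pair of vertices of $A$ has its partners in non-crossing order (otherwise the matching would already be the reversal, i.e.\ the crossing matching); swapping those partners raises the crossing number by exactly one and, by (3), does not lower the weight, so iterating terminates at the crossing matching, which therefore has maximum weight — this is (4). Finally, applying (4) to $A=\{i,i'\}$, $B=\{j,j'\}$ with $i<i'$, $j<j'$, the two matchings have weights $v_{ij}+v_{i'j'}$ (no crossing) and $v_{ij'}+v_{i'j}$ (one crossing), and maximality of the latter is exactly (2).

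I do not expect a real obstacle; the only places needing care are (i) keeping the orientation conventions of the line/parabola picture consistent so that "crossing'' translates into the correct inequalities between indices, and (ii) the bubble-sort step in $(3)\Rightarrow(4)$ — that a non-crossing matching always admits an adjacent crossing-increasing swap and that each adjacent swap changes the crossing number by exactly one. This last point is just the familiar fact that the weak order on the symmetric group has $w_0$ as its unique maximal element, reachable by adjacent transpositions each adding one inversion.
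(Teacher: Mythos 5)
Your proposal is correct and follows essentially the same route as the paper: $(2)\Rightarrow(1)$ and $(4)\Rightarrow(2)$ by specialization, $(1)\Rightarrow(2)$ by the telescoping double sum of $\delta(v)$ over the rectangle (the paper's Proposition~\ref{coor}), $(2)\Leftrightarrow(3)$ via the swap-weight computation, and $(3)\Rightarrow(4)$ by iterating crossing-increasing swaps. Your extra care with the adjacent-swap/bubble-sort step just fills in details the paper leaves implicit.
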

\begin{proof}
	Obviously we have $4\Rightarrow 3\Leftrightarrow 2\Rightarrow 1$: the part 2 is a particular case of 4 when $|A|=|B|=2$, and it is equivalent to 3, and in turn, part 1 is a particular case of 2.
	
	Suppose that part 1 holds. Then, by Proposition \ref{coor} or directly from the definition,
	\[v_{ij'}+v_{i'j}-v_{ij}-v_{i'j'}=\sum_{r=i}^{i'-1}\sum_{s=1}^{j'-1}\delta(v)_{rs}\ge 0\]
	and we have part 2.
	
	Suppose now that part 3 holds. Then we can go from any matching between $A$ and $B$ to the complete crossing by swaps that create crossings, and by part 3 this process increases the weight in each step, so the complete crossing will have higher weight and part 4 holds.
\end{proof}

\begin{remark}
	This definition of fp-positive vectors is similar, but not exactly equal, to the one found in \cite{CreSan:Pfaffians}. In that article, we define a coordinate $w$ for each edge $\{i,j\}$, and then show that the ones corresponding to sides of the polygon are in the lineality space of the four-point positive cone, that is, there are $\binom{n}{2}-n$ inequalities defining four-point positive vectors, that have dimension $\binom{n}{2}$.
	
	For $\delta(v)$ as defined here, it has no coordinates in the lineality space by Theorem \ref{fp1}, and its dimension is $n_1+n_2-1$ less than that of $v$, that is exactly the dimension of the lineality space (given as constants added to each row or column). In this paper, for the standard case, the fp-positive vectors have dimension $\binom{n}{2}-k(k+1)$ and are defined by $\binom{n-2}{2}-k(k-1)$ inequalities. The lineality space is larger: it has dimension $2(n-k)-3$. That is, the projection of that cone is strictly contained in this cone.
\end{remark}

Let $\Delta:\R^\bnn\to\R^\bnn$ be the morphism that sends the $v$ coordinates to the $w$ coordinates as defined for the circular crossings (except by a factor 2 that appears there):
\[\Delta(v)_{ij}=v_{ij}+v_{i+1,j+1}-v_{i,j+1}-v_{i+1,j}\]
(Note that $i$ and $j$ are here taken modulo $n$, and $v_{ii}=0$.)

We can easily see that $\Delta(\phi(v))=\phi(\delta(v))$ when restricted to the coordinates in $\overline{S}$, that is, the operation $\delta$ is the bipartite correspondence of the circular operation $\Delta$.

In the standard case where $S=S_k(n)$, we can characterize exactly which vectors are in the Gr\"obner cone. Let $e_{ij}$, for $(i,j)\in S$, be a vector in the standard basis, and $f_{ij}$, for $(i,j)\in \overline{S}$, the vector whose image by $\delta$ is $e_{ij}$.
\begin{theorem}\label{thm:grobcone}
	$\Grob_k(n)$ is given by the following inequalities:
	\begin{align}
		\sum_{r\ge i,s\ge j, s-r\le k+1}\delta(v)_{rs} \ge 0 &\quad \text{ if } 2\le i+j\le k \\
		\delta(v)_{ij}\ge 0 &\quad \text{ if } k+1\le i+j\le n-k-1 \\
		\sum_{r\le i,s\le j, s+r\ge n}\delta(v)_{rs} \ge 0 &\quad \text{ if } n-k\le i+j\le n-2
	\end{align}
	Its rays are:
	\begin{itemize}
		\item $-e_{ij}$ with $2\le i+j\le k+1$
		\item $f_{ij}$ with $k+2\le i+j\le n-k-2$
		\item $-e_{i+1,j+1}$ with $n-k-1\le i+j\le n-2$
	\end{itemize}
\end{theorem}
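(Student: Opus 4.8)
The plan is to prove the two halves of the statement in turn: the facet (inequality) description, then the list of extreme rays.

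For the inequalities I would start from the definition: $v\in\Grob_k(n)$ exactly when, for every pair of index sets $A,B$ of size $k+1$ with $A\times B\subseteq S_k(n)$, the monomial of the unique $(k+1)$-crossing inside $A\times B$ — which is the anti-diagonal term of that $(k+1)$-minor — has maximal $v$-weight among all $(k+1)!$ permutation terms. The elementary input is the classical fact from the assignment problem that the anti-diagonal term of a square weight matrix is optimal as soon as the matrix satisfies the inverse-Monge inequalities $v_{ij'}+v_{i'j}\ge v_{ij}+v_{i'j'}$ for $i<i'$, $j<j'$ (this is condition (2) of Theorem~\ref{fp1} read \emph{locally}), and — crucially for us — imposing optimality for \emph{all} $(k+1)$-minors of $S_k(n)$ forces these inverse-Monge inequalities wherever some $(k+1)$-minor of $S_k(n)$ can witness the corresponding adjacent swap. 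Rewriting each such inequality via the telescoping already used in the proof of Theorem~\ref{fp1}, namely as the nonnegativity of the block sum of $\delta(v)_{rs}$ over the rectangle $[i,i'-1]\times[j,j'-1]$, the content of the theorem becomes a combinatorial bookkeeping of which inequalities survive: for $(i,j)$ in the interior band $k+1\le i+j\le n-k-1$ there is room in $S_k(n)$ to realize the swap of rows $i,i+1$ against columns $j,j+1$ as an anti-diagonal-consecutive pair of a full $(k+1)$-minor, and the inequality collapses to the single coordinate inequality $\delta(v)_{ij}\ge 0$; near the lower-left corner ($i+j\le k$) and near the cut $a+b\le n$ ($i+j\ge n-k$) no $(k+1)$-minor realizes that adjacent swap, and the strongest conclusion available is the staircase sum in (1), resp.\ (3), where the side conditions $s-r\le k+1$ and $s+r\ge n$ record exactly which cells a $(k+1)$-minor pushed against the relevant corner of $S_k(n)$ sweeps out.

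An alternative and possibly cleaner route to the same inequality description goes through Corollary~\ref{grobcone} and the known description of the Pfaffian Gr\"obner cone $\Grob_k^P(n)$ from \cite{CreSan:Pfaffians}: since $v\in\Grob_k(n)$ iff $v_P+\epsilon\,\phi(v)\in\Grob_k^P(n)$ for small $\epsilon>0$, and $\Delta(v_P+\epsilon\,\phi(v))=\Delta(v_P)+\epsilon\,\phi(\delta(v))$ with $\Delta(v_P)$ supported only near the wrap-around of the cyclic crossing, each defining inequality of $\Grob_k^P(n)$ is either satisfied strictly by $v_P$ (imposing nothing) or satisfied with equality by $v_P$ and then pulls back through $\phi$, via $\Delta(\phi(v))=\phi(\delta(v))$, to one of the inequalities (1)--(3); matching the two cyclic boundary regions with the two corners of $S_k(n)$ reproduces the three index ranges.

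For the rays I would pass to the quotient by the lineality space, which by Proposition~\ref{coor} is precisely $\{v:\delta(v)=0\}$ (constant on each row plus constant on each column), of dimension $2(n-k-1)-1$; in the quotient, coordinatized by the entries of $\delta(v)$ over $\overline{S_k(n)}$, the inequalities above present $\Grob_k(n)$ through a system that is triangular with respect to the grading by $i+j$. One then checks that the three claimed families generate it: for $(i,j)$ in the interior band, $\delta(v)_{ij}\ge 0$ is a coordinate inequality whose dual ray is $f_{ij}$, while for the two corner regions the extreme rays are $-e_{ij}$, resp.\ $-e_{i+1,j+1}$, which one verifies by noting that $\delta(-e_{ij})$ is the $\pm1$ pattern with $+1$ on $(i,j)$ and $(i-1,j-1)$ and $-1$ on $(i-1,j)$ and $(i,j-1)$, and that feeding this into the staircase sums of (1) and (3) makes every constraint tight except a single one in the appropriate corner. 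Counting the rays obtained against the dimension of the quotient then shows there are no others. The step I expect to be the real obstacle is the corner bookkeeping in the first part: proving that the inequalities (1)--(3), \emph{with exactly those summation regions and index ranges}, are simultaneously sufficient (they must force the anti-diagonal to be optimal in every $(k+1)$-minor of $S_k(n)$, including those squeezed against a corner) and non-redundant (the inverse-Monge inequalities that are dropped near the corners genuinely fail to be witnessed by any $(k+1)$-minor, so they are not required for the minors to form a Gr\"obner basis with crossing leading terms). The ray verification is then routine but must again be handled with care near those same two corners.
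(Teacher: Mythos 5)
Your ``alternative route'' is precisely the paper's proof: the paper describes $\Grob_k(n)$ as the link of the smallest face of $\Grob_k^P(n)$ containing $v_P$, computes that $\Delta(v_P)_{ij}=0$ except $\Delta(v_P)_{in}=2$, discards the inequalities of $\Grob_k^P(n)$ involving a coordinate $\Delta(v)_{in}$ (these are satisfied strictly by $v_P$, hence impose nothing on the link), and pulls the surviving long and short inequalities back through $\phi$ using $\Delta(\phi(v))=\phi(\delta(v))$; your matching of the cyclic index ranges with the three bands in $i+j$ is exactly the bookkeeping the paper leaves implicit. Your ray verification also matches the paper's: the paper sets all inequalities to equality except one and observes that outside the middle band this forces $\delta(v)_{ij}=-\delta(v)_{i,j-1}=-\delta(v)_{i-1,j}=\delta(v)_{i-1,j-1}>0$, which is your $\pm1$ pattern for $\delta(-e_{ij})$. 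Your first route (deriving the inequalities directly from optimality of the anti-diagonal term in every $(k+1)$-minor of $S_k(n)$, via inverse-Monge inequalities and telescoping) is a genuinely different and more self-contained path that would bypass the Pfaffian machinery of \cite{CreSan:Pfaffians}; you correctly identify that its cost is the delicate corner analysis of which adjacent swaps are witnessed by some minor, which is exactly the work the paper outsources to the already-established description of $\Grob_k^P(n)$. Since you supply the paper's route in full, the proposal stands.
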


\begin{proof}
	The cone we are looking for is the ``link'' of the smallest face containing $v_P$ in $\Grob_k^P(n)$. There is an expression for $\Grob_k^P(n)$ in \cite[Theorem 2.9]{CreSan:Pfaffians}: its inequalities are
	\begin{align}
		\Delta(v)_{ij}&\ge 0\quad \text{if } |j-i|>k  & \text{(long inequalities)}\label{eqrel} \\
		\sum_{i'\le i<j\le j'\le i'+k+1} \Delta(v)_{i'j'} & \ge 0\quad \text{if } 2\le |j-i|\le k & \text{(short inequalities)} \label{eqirrel}
	\end{align}
	Using the formula to compute $\Delta(v_P)$, we get that $\Delta(v_P)_{ij}=0$ except if $j=n$, in which case $\Delta(v_P)_{in}=n-i+(i+1)-1-n+(i+1)-i+1=2$. That is, taking the link of $v_P$ is equivalent to setting the coordinates $\{i,n\}$ to a value much bigger than the rest, and all inequalities including a $\Delta(v)_{in}$ become irrelevant. Replacing $v$ by $\phi(v)$ and using that $\Delta(\phi(v))=\phi(\delta(v))$, we have the desired result.
	
	The rays are deduced by setting all the inequalities to hold with equality except one. If we do this with a coordinate in the range $k+2\le i+j\le n-k-2$, we get just $\delta(v)_{ij}>0$ and the rest equal to 0, which is $f_{ij}$. If the coordinate is outside this range, the result is
	\[\delta(v)_{ij}=-\delta(v)_{i,j-1}=-\delta(v)_{i-1,j}=\delta(v)_{i-1,j-1}>0\]
	and the rest 0, which leads exactly to $-e_{ij}$, or
	\[\delta(v)_{ij}=-\delta(v)_{i,j+1}=-\delta(v)_{i+1,j}=\delta(v)_{i+1,j+1}>0\]
	which leads to $-e_{i+1,j+1}$.
\end{proof}

\begin{theorem} \label{fp}
	For any $k$-full set $S$, $\Grob_k(S)$ contains the four-point positive cone. For weights in the interior of the cone, the $(k+1)$-size minors are a Gr\"obner basis. 
\end{theorem}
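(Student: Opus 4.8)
The first assertion is almost formal once Theorem~\ref{fp1} is available. If $v$ is four-point positive, condition~(4) of that theorem says that in every square block $A\times B\subseteq S$ the crossing matching attains the maximal $v$-weight; specializing to blocks of size $k+1$ gives exactly $v\in\Grob_k(S)$. Moreover, if $v$ lies in the interior of the fp-positive cone, i.e.\ all coordinates of $\delta(v)$ are strictly positive, then the inequalities in condition~(2) are strict, and summing them shows that in each $(k+1)$-block the crossing is the \emph{unique} maximal matching; hence $v$ lies in the (relative) interior of $\Grob_k(S)$. So the content of the theorem is the Gröbner basis statement, and my plan is to reduce it to the case $S=S_k(n)$, which is Corollary~\ref{grobcone}.

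The combinatorial heart of the reduction is the claim that \emph{every $k$-full Ferrers diagram is, after relabelling, the restriction of $S_k(n)$ to some subset $A$ of its rows and some subset $B$ of its columns, provided $n$ is taken large enough}. To see this, write $S$ via its shape $s_1=\dots=s_k=n_2\ge s_{k+1}\ge\dots\ge s_{n_1}\ge k$; since $S_k(n)$ has rows of every length from $k+1$ to $n-k-1$ (and arbitrarily many of the maximal length), one can fix a very spread-out column set $B\subset[n-k-1]$ of size $n_2$ and then pick, for each $i$, a row of $S_k(n)$ whose length falls in the window of $B$ that cuts it down to exactly $s_i$ columns. With this identification the variables of $I_k(S)$ form a subset of those of $I_k(n)$, every $(k+1)$-minor contained in $S$ is a $(k+1)$-minor of $S_k(n)$, and therefore $I_k(S)\subseteq I_k(n)$ as ideals in the ambient polynomial ring. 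One also checks, through the $\delta$-coordinates and Proposition~\ref{coor}, that restriction maps the fp-positive cone of $S_k(n)$ onto the fp-positive cone of $S$, carrying interior to interior: given $v$ in the interior of the fp-positive cone of $S$, one extends it to $\tilde v$ on $S_k(n)$ by spreading each strictly positive value $\delta(v)_{ij}$ uniformly over the corresponding rectangular block of $\overline{S_k(n)}$, putting a small positive value on the remaining cells, and fixing the free row/column constants so that $\tilde v|_S=v$.

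Then $\tilde v$ is fp-positive on $S_k(n)$; since by Theorem~\ref{thm:grobcone} the facet inequalities of $\Grob_k(n)$ are nonnegative sums of the $\delta(\tilde v)_{rs}$, all of which are positive, $\tilde v$ lies in the interior of $\Grob_k(n)$. By Corollary~\ref{grobcone} the $(k+1)$-minors of $S_k(n)$ form a Gröbner basis of $I_k(n)$ with respect to $\tilde v$, with crossing leading monomials. Now take any $f\in I_k(S)\subseteq I_k(n)$. Because $\tilde v$ is interior, $\ini_{\tilde v}(f)$ is a single monomial, divisible by the crossing monomial of some $(k+1)$-minor $\mu$ of $S_k(n)$; since $f$, hence this monomial, involves only variables of $S$, and a crossing monomial uses one entry in each row and each column of its block, all rows of $\mu$ lie in $A$ and all columns lie in $B$, so the block of $\mu$, being contained in $S_k(n)\cap(A\times B)=S$, makes $\mu$ a $(k+1)$-minor of $S$. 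Since $\tilde v|_S=v$ we have $\ini_v(f)=\ini_{\tilde v}(f)$, so $\ini_v(I_k(S))$ is contained in the ideal generated by the crossing monomials of the $(k+1)$-minors of $S$; the reverse inclusion being obvious, these minors are a Gröbner basis of $I_k(S)$ with respect to $v$.

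The step I expect to be delicate is the combinatorial reduction: verifying carefully that an arbitrary $k$-full shape is realized as a row-and-column restriction of some $S_k(n)$, and that this restriction induces a surjection of fp-positive cones respecting relative interiors. One also uses implicitly that $I_k(S)$ really is the full prime ideal of the projected determinantal variety and is generated by the $(k+1)$-minors contained in $S$ (the ladder-determinantal description recalled before the statement); without this, both the inclusion $I_k(S)\subseteq I_k(n)$ and the identification of $\ini_v(I_k(S))$ could fail.
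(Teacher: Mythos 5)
Your proposal is correct and follows essentially the same route as the paper: the cone containment comes from condition (4) of Theorem~\ref{fp1}, an arbitrary $k$-full $S$ is realized as a row-and-column restriction of some $S_k(n)$, the weight vector is extended so that all coordinates of $\delta$ stay positive, and the Gr\"obner basis statement of Corollary~\ref{grobcone} is then restricted back down to $I_k(S)$. The paper's proof is terser---it asserts the embedding into $S_k(n)$ and the final restriction step without the details you supply---but the underlying argument is the same.
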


\begin{proof}
	The first sentence is a consequence of the point 4 of Theorem \ref{fp1}.
	
	For the second one, Corollary \ref{grobcone} already proves it for $S=S_k(n)$. Now we will prove it for any other $k$-full $S$. In first place, it is easy to see that $S$ can be obtained from $S_k(n)$ for some $n$ by deleting rows and columns. In terms of $I_k(n)$, this is equivalent to restrict the ideal to a subset of variables.
	
	Now, we can extend the weight vector $v$ from $S$ to $S_k(n)$ interpolating linearly the values in the new rows and columns. The effect of this in $\delta(v)$ is to divide the value in a coordinate between several ones. This preserves positivity of the coordinates, so we are still in the Gr\"obner cone. Applying Corollary \ref{grobcone} to this case, we have that the minors are a Gr\"obner basis, so the restriction of the ideal also is.
\end{proof}

Putting together Lemma \ref{indep} and Theorem \ref{fp}, we get:

\begin{corollary}
	$(k+1)$-free bipartite graphs defined in a set $S$ are independent in the algebraic matroid of $\M_k(S)$. Hence, the maximal of these graphs (that have $k(n_1+n_2)-k^2$ edges) are bases.
\end{corollary}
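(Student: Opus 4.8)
The plan is to feed Theorem~\ref{fp} directly into Proposition~\ref{indep}. First I would record that the ideal $I_k(S)$ is prime: by construction $\M_k(S)$ is the coordinate projection to $S$ of $\M_k(n_1,n_2)$, and the latter (the matrices of rank at most $k$) is the image of the irreducible variety $\R^{n_1\times k}\times\R^{k\times n_2}$ under the matrix–multiplication map, as used in the proof of Theorem~\ref{thm:hyper-algebraic}; hence $\M_k(S)$ is irreducible and $I_k(S)$ is prime, so Proposition~\ref{indep} applies with $I=I_k(S)$. Recall also that throughout this subsection $S$ is a $k$-full Ferrers diagram.

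Next I would pin down a concrete weight vector. Using Proposition~\ref{coor}, choose $v\in\R^S$ with $\delta(v)_{rs}>0$ for every $(r,s)\in\overline{S}$; such a $v$ exists and, by Theorem~\ref{fp1}, lies in the interior of the four-point positive cone. For this $v$, Theorem~\ref{fp} supplies the two ingredients we need simultaneously: the $(k+1)$-size minors of $\M_k(S)$ form a Gr\"obner basis of $I_k(S)$ with respect to $v$, and (the four-point positive cone being contained in $\Grob_k(S)$) the $v$-leading form of each such minor is supported on its $(k+1)$-crossing. Strict positivity of all coordinates of $\delta(v)$ promotes the relevant inequalities of Theorem~\ref{fp1}(2) to strict ones, so this leading form is a single monomial. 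Therefore, in the notation of Proposition~\ref{indep}, the family $\mathcal{F}$ of supports of leading terms of the Gr\"obner basis is exactly the collection of $(k+1)$-crossings contained in $S$.

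The conclusion is then immediate. A $(k+1)$-free bipartite graph $E\subseteq S$ contains, by definition, no $(k+1)$-crossing, hence no element of $\mathcal{F}$, and Proposition~\ref{indep} gives that $E$ is independent in the algebraic matroid of $I_k(S)$, i.e.\ of $\M_k(S)$. For the last sentence I would bound the rank of that matroid, which equals $\dim\M_k(S)\le\dim\M_k(n_1,n_2)=k(n_1+n_2)-k^2$ since $\M_k(S)$ is a coordinate projection of $\M_k(n_1,n_2)$. By Theorem~\ref{free} together with $k$-fullness of $S$, every maximal $(k+1)$-free subset of $S$ has exactly $k(n_1+n_2)-k^2$ edges; being independent of size matching an upper bound for the rank, it is a basis (and the rank is exactly $k(n_1+n_2)-k^2$).

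I do not expect a genuine obstacle: the substantive work is already contained in Theorem~\ref{fp} and Proposition~\ref{indep}. The only points deserving a sentence of care are the primality of $I_k(S)$, the non-emptiness of the interior of the four-point positive cone (handled via Proposition~\ref{coor}), and the rank/dimension count that upgrades ``independent of the expected size'' to ``basis''; all three are routine. Note finally that, via Theorem~\ref{thm:hyper-algebraic}, this corollary is exactly the restatement of Theorem~\ref{thm:bipbases} in algebraic-matroid language, so no new combinatorics of multitriangulations is needed here.
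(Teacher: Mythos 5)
Your proposal is correct and follows exactly the paper's route: the paper proves this corollary in one line by ``putting together'' Proposition~\ref{indep} and Theorem~\ref{fp}, which is precisely what you do, and the extra details you supply (primality of $I_k(S)$, existence of an interior fp-positive weight, and the dimension count upgrading ``independent of size $k(n_1+n_2)-k^2$'' to ``basis'') are all correct and implicit in the paper's argument.
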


This has two consequences. On one hand, together with Theorem \ref{thm:hyper-algebraic}, it implies Theorem \ref{thm:bipbases}. On the other hand, we can apply to this case Theorem \ref{thm:algebraic}, resulting in:
\begin{theorem}
	\label{thm:matroid}
	Let $T\subset [n_1]\times[n_2]$.
	\begin{enumerate}
		\item If $T$  is $(k+1)$-free and $\K$ is algebraically closed, then for any generic choice of values $v\in \K^T$ there is at least one matrix in $\K^{n_1\times n_2}$ of rank $\le k$ with the entries prescribed by $v$.
		\item If $T$ contains a maximal $(k+1)$-free graph then for any choice of values $v\in \K^T$ there is only a finite number (maybe zero) of matrices in $\K^{n_1\times n_2}$ of rank $\le k$ with those prescribed entries.
	\end{enumerate}
\end{theorem}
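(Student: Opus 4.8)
The plan is to read off both statements from the matrix-completion dictionary of Theorem~\ref{thm:algebraic}, applied to the determinantal variety $V=\M_k(n_1,n_2)$. First I would invoke the classical primality of generic determinantal ideals to know that $I=I_k(n_1,n_2)$ is prime, so that $V$ is irreducible and Theorem~\ref{thm:algebraic} applies with this $I$, this $V$, and the coordinate set $T$. Next, taking the ambient Ferrers diagram to be the full rectangle $S=[n_1]\times[n_2]$ in the Corollary just proved: if $T$ is $(k+1)$-free then $T$ is independent in the algebraic matroid of $\M_k(n_1,n_2)$, and if $T$ contains a maximal $(k+1)$-free graph then $T$ contains a basis and is therefore spanning. (When $\min\{n_1,n_2\}\le k$ both claims are vacuous, every matrix already having rank $\le k$; otherwise $n_1,n_2\ge k+1$, $S$ is $k$-full, and the edge count $k(n_1+n_2)-k^2$ of the Corollary is the rank of the matroid.)

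For part (1), I would use independence of $T$ to get, via Theorem~\ref{thm:algebraic}(1), that $\pi_T(V)$ is Zariski dense in $\K^T$. Since $\K$ is algebraically closed, the image of the morphism $\pi_T$ is constructible by Chevalley's theorem, hence, being dense, it contains a dense Zariski open subset of $\K^T$; any $v$ in that open set---that is, a generic $v\in\K^T$---is $\pi_T(M)$ for some $M\in\K^{n_1\times n_2}$ of rank $\le k$, which is the completion asserted.

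For part (2), I would use that $T$ is spanning to get, via Theorem~\ref{thm:algebraic}(3), that $\pi_T$ is finite-to-one; hence for every prescription $v\in\K^T$ the fibre $\pi_T^{-1}(v)$ is finite (possibly empty), which says there are only finitely many rank-$\le k$ matrices with the prescribed entries. Here no algebraic-closedness is needed: finiteness of fibres is unaffected by passing to $\bar\K$, and the $\K$-solutions lie among the $\bar\K$-solutions, so the statement holds over an arbitrary field $\K$, as in the theorem.

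The substance is carried by the earlier results; the single step needing an external input is the passage in part (1) from Zariski density of $\pi_T(V)$ to $\pi_T(V)$ actually containing a generic point, which is precisely Chevalley's constructibility theorem over an algebraically closed field, and I anticipate no further obstacle.
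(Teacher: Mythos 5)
Your proposal is correct and follows exactly the route the paper takes: the paper derives Theorem~\ref{thm:matroid} in one line by applying Theorem~\ref{thm:algebraic} to the preceding Corollary (that $(k+1)$-free sets are independent and the maximal ones are bases) for the variety $\M_k(n_1,n_2)$. Your additional remarks---primality of the determinantal ideal, Chevalley's theorem to pass from density of $\pi_T(V)$ to surjectivity onto a dense open set, and the descent of fibre-finiteness to arbitrary $\K$---are correct fillings-in of steps the paper leaves implicit.
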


\subsection{Tropical geometry}\label{sec:tropical}
In \cite{CreSan:Pfaffians}, we prove that the tropical prevariety of antisymmetric matrices with size $n$ and rank at most $2k$ contains a fan isomorphic to the $k$-associahedron in $n$ vertices. It turns out that the tropical prevariety of matrices with size $n-k-1$ and rank at most $k$ (projected to a subset of the coordinates) also contains a fan isomorphic to the $(k-1)$-associahedron in $n-2$ vertices.

More concretely, these are the results about Pfaffians, where $\Pf{k}{n}$ is the Pfaffian variety, $\PV{k}{n}$ the tropical prevariety, $\Grob_k^P(n)$ the Gr\"obner cone for $(k+1)$-crossings and $\PVplus{k}{n}=\PV{k}{n}\cap\Grob_k^P(n)$:
\begin{theorem}[\cite{CreSan:Pfaffians}, Theorem 4.3]
	Let $v$ be a vector in $\Grob_k^P(n)$. Then, $v\in\PVplus{k}{n}$ if and only if the support of $\Delta(v)$ is $(k + 1)$-free.
\end{theorem}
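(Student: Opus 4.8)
The statement is about the tropical prevariety of degree-$(k+1)$ Pfaffians, so the first move is to unfold the definitions. Fix $v\in\Grob_k^P(n)$. By definition of a tropical prevariety, $v\in\PV{k}{n}$ holds exactly when, for every $(2k+2)$-subset $A=\{i_1<\dots<i_{2k+2}\}\subseteq[n]$, the initial form $\ini_v(\mathrm{Pf}_A)$ of the corresponding Pfaffian generator is not a monomial. Since $v$ lies in the Gr\"obner cone, the monomial of maximal $v$-weight in $\mathrm{Pf}_A$ is the ``crossing'' one $m_A=\prod_{j=1}^{k+1}x_{\{i_j,i_{k+1+j}\}}$; equivalently, the maximal $v$-weight among the $(2k+1)!!$ perfect matchings of $A$ is attained by the crossing matching $M_A$, whose edge set is precisely the unique $(k+1)$-crossing supported on the vertex set $A$. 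Hence $v\in\PVplus{k}{n}$ if and only if, for every such $A$, that maximum is attained \emph{at least twice}, i.e. $M_A$ is not the unique $v$-weight-maximal perfect matching of $A$.

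The second ingredient is the description of $\Grob_k^P(n)$ through the discrete mixed difference $\Delta(v)$, together with a telescoping identity. The cone $\Grob_k^P(n)$ is cut out (\cite[Theorem~2.9]{CreSan:Pfaffians}; compare Theorem~\ref{thm:grobcone}) by nonnegativity of every relevant coordinate $\Delta(v)_{ij}$ (length $>k$) together with nonnegativity of prescribed staircase sums of $\Delta(v)$-coordinates for the boundary edges (length between $2$ and $k$). On the matching side, the weight change produced by an elementary two-edge swap inside a perfect matching of $A$ is a ``quadrangle difference'' of $v$, and a telescoping computation rewrites it as an integer combination of coordinates of $\Delta(v)$: for the swap turning a crossing pair on $\{a<b<c<d\}$ into the nested pair this combination is $\sum_{a\le i\le b-1,\ c\le j\le d-1}\Delta(v)_{ij}$, whose terms on relevant coordinates are nonnegative by the long inequalities, and for the swap into the short pair one gets a similar but mixed expression whose overall sign is governed by the short inequalities. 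Consequently, \emph{inside $\Grob_k^P(n)$} each such quadrangle difference vanishes if and only if all the $\Delta(v)$-coordinates occurring in it vanish.

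The plan is then: (1) make the reduction of the first paragraph; (2) prove the telescoping identities and deduce the ``vanishes iff all summands vanish'' rigidity just described; (3) run a bubble-sort / local-to-global argument: one passes from any perfect matching of $A$ to $M_A$ by a sequence of elementary un-crossing swaps, each weakly decreasing the $v$-weight, so $M_A$ is the \emph{unique} maximum on $A$ if and only if every elementary un-crossing strictly decreases it, which by step (2) amounts to the non-vanishing of a prescribed ``staircase'' block of $\Delta(v)$-coordinates attached to $A$; (4) quantify over all $A$ and identify: as $A$ ranges over the $(2k+2)$-subsets, the staircase blocks are arranged so that ``for every $A$ some coordinate in the block of $A$ vanishes'' is precisely the statement that no $(k+1)$-crossing of edges is entirely contained in $\mathrm{supp}(\Delta(v))$, i.e. that $\mathrm{supp}(\Delta(v))$ is $(k+1)$-free.

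The main obstacle I expect is the interplay of steps (2)--(4). The telescoping identities are genuinely circular ($v_{ii}=0$ and indices are taken mod $n$), so one must track carefully which $\Delta(v)$-coordinates are relevant, which are boundary, and which sit in the lineality space of the fp-positive cone — exactly the bookkeeping that the $w$-coordinate / fp-positive formalism (Theorem~\ref{fp1} and its Pfaffian counterpart in \cite{CreSan:Pfaffians}) is designed to organize. The delicate part of step (4) is to use the Gr\"obner-cone inequalities to \emph{propagate} a vanishing $\Delta(v)$-coordinate along a staircase, so that a tie on some $A$ can always be witnessed by, and conversely always forces, the presence or absence of a full $(k+1)$-crossing inside $\mathrm{supp}(\Delta(v))$; once this matching-theoretic bookkeeping is in place, both directions of the equivalence follow.
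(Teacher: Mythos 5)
A preliminary remark: this theorem is not proved in the present paper; it is imported verbatim from \cite{CreSan:Pfaffians} as background for the tropical section, so there is no in-paper proof to compare against. Judging your sketch on its own terms: step (1) (reduce to ``for every $(2k+2)$-set $A$ the crossing matching $M_A$ is not the unique $v$-weight maximizer'') and the telescoping identity $v_{ac}+v_{bd}-v_{ad}-v_{bc}=\sum_{a\le i<b,\ c\le j<d}\Delta(v)_{ij}$ are correct, but steps (2)--(4) contain genuine gaps.

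The central problem is that your two workhorse claims hold only on the fp-positive cone, not on all of $\Grob_k^P(n)$. Monotonicity of the weight along \emph{every} crossing-increasing swap is precisely condition (3) of the fp-positivity characterization (Theorem~\ref{fp1} and its Pfaffian analogue), and $\Grob_k^P(n)$ is strictly larger than the fp-positive cone: by Theorem~\ref{thm:grobcone} its rays include vectors with negative $\Delta(v)$-coordinates on short edges, and a swap on a quadruple $\{a<b<c<d\}\subseteq A$ whose rectangle meets short edges can \emph{decrease} the weight while increasing crossings. So the bubble-sort path need not be monotone, and the ``rigidity'' claim that a quadrangle difference vanishes iff all its $\Delta(v)$-summands vanish is false for such rectangles (a zero sum can hide cancelling signs). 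What rescues the argument --- and what your sketch does not isolate --- is that for the $k+1$ swaps of \emph{consecutive} edges of $M_A$, on quadruples $\{i_j,i_{j+1},i_{k+1+j},i_{k+2+j}\}$, every entry of the corresponding rectangle has cyclic length at least $k+1$, hence is nonnegative by the long inequalities; only for these rectangles is ``sum positive iff the rectangle meets $\Supp\Delta(v)$'' valid. One must then prove separately (a) that $M_A$ fails to be the unique maximizer iff one of these $k+1$ particular sums vanishes (i.e.\ that $w(M_A)-w(M)$ dominates a full adjacent rectangle sum for \emph{every} matching $M$, which is not a naive path argument), and (b) the combinatorial bridge that choosing one support edge from each of the $k+1$ staircase-arranged rectangles produces $k+1$ pairwise crossing edges, hence a $(k+1)$-crossing in $\Supp\Delta(v)$. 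Your step (4) instead equates ``some coordinate in the block of $A$ vanishes'' with ``$M_A\not\subseteq\Supp\Delta(v)$''; these are different conditions (the block is much larger than $M_A$), and as stated the identification gives at most one of the two implications.
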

\begin{theorem}[\cite{CreSan:Pfaffians}, Corollary 4.5]\label{freevarpf}
	$\PVplus{k}{n}\subset\trop(\Pf{k}{n})$. Moreover, $\PVplus{k}{n}\subset\trop^+(\Pf{k}{n})$.
\end{theorem}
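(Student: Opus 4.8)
The plan is to prove the stronger of the two inclusions, $\PVplus{k}{n}\subseteq\trop^+(\Pf{k}{n})$; since a monomial is in particular a polynomial all of whose coefficients are real and positive, one has $\trop^+(\Pf{k}{n})\subseteq\trop(\Pf{k}{n})$, and the first inclusion then comes for free. Fix $v\in\PVplus{k}{n}$; by density of rational points and closedness of $\trop$ and $\trop^+$ we may assume $v$ rational. By the characterization of $\trop$ and $\trop^+$ through lifts to a field of Puiseux series (\cite[Propositions 2.1 and 2.2]{SpeWil}), it is enough to exhibit an antisymmetric $n\times n$ matrix $X(t)$ with entries in the field of real Puiseux series, of rank at most $2k$, such that each entry $x_{ij}(t)$, $i<j$, has $t$-valuation $v_{ij}$ and \emph{positive leading coefficient}: then $X(t)$ is a point of $\Pf{k}{n}$ witnessing simultaneously $v\in\trop^+(\Pf{k}{n})$ and $v\in\trop(\Pf{k}{n})$, and equivalently the vector of leading coefficients of the $x_{ij}(t)$ is a strictly positive real point of $V(\ini_v(I_k^P(n)))$, on which neither a monomial nor a polynomial with all positive coefficients can vanish. (One may also organize the reduction through initial ideals: $\PVplus{k}{n}\subseteq\Grob_k^P(n)$ by definition, and on the interior of the crossing cone the degree $k+1$ Pfaffians form a Gröbner basis of $I_k^P(n)$, by the Gröbner basis result of \cite{CreSan:Pfaffians} used in the proof of Corollary~\ref{grobcone}; hence for $v$ generic in a maximal cone of $\PVplus{k}{n}$ the ideal $\ini_v(I_k^P(n))$ is generated by the finitely many initial Pfaffians $\ini_v(P)$, and it suffices to find a common positive zero of them.)

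To build the lift I would feed in the combinatorial description of $\PVplus{k}{n}$ given by the preceding theorem (\cite[Theorem 4.3]{CreSan:Pfaffians}): the support of $\Delta(v)$ is a $(k+1)$-free subset of $\bnn$, hence a face of $\Ass{k}{n}$, and so extends to a $k$-triangulation $T$. Writing an antisymmetric matrix of rank $\le 2k$ in the factored form $X(t)=\sum_{a=1}^{k}\bigl(u_a(t)\,v_a(t)^{\top}-v_a(t)\,u_a(t)^{\top}\bigr)$, one rank-two summand for each of the $k$ ``layers'' into which a $k$-triangulation naturally organizes (via its stars, as in the multitriangulation theory of \cite{PilSan,PilPoc}), I would take the $u_a(t),v_a(t)$ with real positive Puiseux entries and with valuation scales well separated between the layers, so that on its own layer the $a$-th summand dominates the sum entrywise. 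Within a single layer the problem collapses to the case $k=1$: a triangulation is the combinatorial type of a positive tree metric, and the Speyer--Williams realization of positive tree metrics by totally positive points of $\Gr2n$ over the Puiseux series field (the circle of ideas behind the theorem quoted in the introduction, \cite{SpeWil}) supplies $u_a(t),v_a(t)$ whose $2\times2$ minors $u_{a,i}v_{a,j}-v_{a,i}u_{a,j}$, $i<j$, have positive leading coefficients and the valuations that $v$ prescribes on that layer. Summing over layers produces the candidate matrix $X(t)$.

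The crux, and the step I expect to be the main obstacle, is ruling out cancellation between layers: the entry $x_{ij}(t)=\sum_{a}\bigl(u_{a,i}v_{a,j}-v_{a,i}u_{a,j}\bigr)$ a priori satisfies only $\mathrm{val}(x_{ij})\ge\min_a\mathrm{val}(u_{a,i}v_{a,j}-v_{a,i}u_{a,j})$, and one must arrange that this minimum is attained --- uniquely, or by summands whose leading terms reinforce rather than cancel --- by a summand with positive leading coefficient, so that the valuation pattern is exactly $v$ and every leading coefficient stays positive. This is precisely where the two hypotheses are needed: belonging to the crossing cone $\Grob_k^P(n)$ together with the $(k+1)$-freeness of $\Supp(\Delta(v))$ are exactly the conditions that force, Pfaffian by Pfaffian, the $(k+1)$-crossing perfect matching to have maximal $v$-weight and to tie only with matchings of opposite Pfaffian sign --- the higher-$k$ counterpart of the four-point condition for tree metrics. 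On the initial-ideal side this says that for generic $v$ each $\ini_v(P)$ is, up to a global sign, a binomial $x_{M_{\mathrm{cross}}}-x_{M'}$ of squarefree degree $k+1$ monomials with opposite Pfaffian coefficients, so the all-ones vector already lies on $V(\langle\ini_v(P)\rangle)=V(\ini_v(I_k^P(n)))$; checking this sign pattern --- that no two competing matchings carry the same Pfaffian sign, so that no positive binomial ever occurs --- is the decisive combinatorial verification, and the non-generic $v\in\PVplus{k}{n}$ then follow by the density and closedness argument above. In this way the statement becomes the $k$-fold generalization of the Speyer--Williams description of $\trop^+(\Gr2n)$ recalled at the start of the paper.
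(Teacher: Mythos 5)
This statement is not proved in the paper: it is imported verbatim from \cite{CreSan:Pfaffians} (Corollary 4.5 there) and used as a black box in the proof of Theorem \ref{freevar}, so there is no in-paper proof to compare against and your proposal has to stand on its own. Its overall reduction is sound and standard: it suffices to exhibit, for each $v\in\PVplus{k}{n}$, a real point with all coordinates positive in the variety of $\ini_v(I_k^P(n))$ (equivalently, a positive Puiseux-series point of $\Pf{k}{n}$ with valuation $v$), and since the degree-$(k+1)$ Pfaffians are a Gr\"obner basis for weights in $\Grob_k^P(n)$, this reduces to finding a common positive zero of the initial Pfaffians.

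However, both routes you sketch stop exactly where the content of the theorem lies. In the layered construction you need (i) that the $(k+1)$-free support of $\Delta(v)$, extended to a $k$-triangulation, splits into $k$ pairwise non-crossing ``layers'' on each of which the valuations of $v$ restrict to a realizable tree metric, and (ii) that the $k$ rank-two summands do not cancel entrywise. You flag (ii) yourself as the main obstacle and do not resolve it; (i) is also unjustified and genuinely problematic: a $(k+1)$-free set of diagonals need not partition into $k$ non-crossing subsets (clique number does not bound the chromatic number of circle graphs --- there are triangle-free examples with chromatic number $5$ --- and whether every $k$-triangulation is a union of $k$ triangulations is itself open), and even granting a combinatorial decomposition, nothing forces the valuation data to split along it. In the Gr\"obner route, the assertion that each initial Pfaffian is a binomial whose two matchings carry opposite Pfaffian signs --- so that the all-ones vector is a common positive zero --- is exactly the ``decisive combinatorial verification'' that you defer rather than carry out; note moreover that when three or more matchings tie for the maximal $v$-weight, at least two of them share a sign, so even the weaker claim that the all-ones vector annihilates every $\ini_v(P)$ requires an argument about how the $\pm1$ coefficients of the tied matchings balance. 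Until one of these two steps is actually proved, the argument is a plausible strategy rather than a proof.
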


Then, for the determinantal prevariety, we have analogously defined $\M_k(S)$ as the projection of the determinantal variety into a subset $S$ of coordinates, and its particular case $\M_k(n)$. So we can define $M_k(n)$ as the tropical prevariety of $I_k(n)$, $\Grob_k(n)$ the Gr\"obner cone for the $(k+1)$-crossings, and $\Mplus{k}{n}=M_k(n)\cap\Grob_k(n)$. Then, the following is true:
\begin{theorem}\label{freeprev}
	Let $v$ be a vector in $\Grob_k(n)$. Then, $v\in\Mplus{k}{n}$ if and only if the support of $\delta(v)$ is $k$-free.
\end{theorem}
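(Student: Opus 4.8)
The plan is to deduce Theorem~\ref{freeprev} from its Pfaffian counterpart \cite[Theorem 4.3]{CreSan:Pfaffians} by transporting everything across the relabelling $\phi$, exactly in the spirit of Corollary~\ref{grobcone} and Lemma~\ref{initial}. The first step is to observe that, since $\phi(I_k(n))=\ini_{v_P}(I_k^P(n))$ by Lemma~\ref{initial}, the tropical prevariety $M_k(n)=\trop(I_k(n))$ is, after applying the coordinate permutation $\phi$ (and up to the free directions coming from the coordinates of $\binom{[n]}2$ not in the image of $\phi$, which do not occur in $\ini_{v_P}(I_k^P(n))$), the star of $\PV{k}{n}=\trop(I_k^P(n))$ at $v_P$. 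Using the standard fact that $\trop(\ini_w I)=\{u:\ w+\epsilon u\in\trop(I)\text{ for all sufficiently small }\epsilon>0\}$, this reads: $v\in M_k(n)$ if and only if $v_P+\epsilon\,\phi(v)\in\PV{k}{n}$ for $\epsilon$ small enough.

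Combining this with Corollary~\ref{grobcone}, which says $v\in\Grob_k(n)$ iff $v_P+\epsilon\,\phi(v)\in\Grob_k^P(n)$, we obtain for small $\epsilon>0$
\[
v\in\Mplus{k}{n}=M_k(n)\cap\Grob_k(n)\iff v_P+\epsilon\,\phi(v)\in\PV{k}{n}\cap\Grob_k^P(n)=\PVplus{k}{n}.
\]
By \cite[Theorem 4.3]{CreSan:Pfaffians}, the right-hand side is equivalent to the support of $\Delta(v_P+\epsilon\,\phi(v))$ being $(k+1)$-free. Now $\Delta$ is linear, $\Delta(\phi(v))=\phi(\delta(v))$ on the coordinates of $\overline S$ (as noted just before the statement), and $\Delta(v_P)$ is supported exactly on the $n-1$ coordinates $\{i,n\}$, where it equals $2$ (computed in the proof of Theorem~\ref{thm:grobcone}); moreover $\phi(\Supp(\delta(v)))$ never meets vertices $1$ or $n$, since the rows/columns indexed by $1$ were deleted in passing from $S$ to $\overline S$. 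Hence, for $\epsilon>0$ small enough, the support of $\Delta(v_P+\epsilon\,\phi(v))$ is exactly $\{\{i,n\}:\ i\in[n-1]\}\cup\phi(\Supp(\delta(v)))$.

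What remains is the purely combinatorial equivalence: this set is $(k+1)$-free, as a set of diagonals of the $n$-gon, if and only if $\Supp(\delta(v))$ is $k$-free in the Ferrers diagram $\overline S$. One clean way to see it is via Monotonicity: adjoining to a set of diagonals all edges incident to $n$ and asking for $(k+1)$-freeness is the same as passing to the link at the set of edges at $n$, and $\link_{\Ass{k}{n}}(\{\text{edges at }n\})$ is, up to the relabelling implemented by $\phi$, the complex $\OvAss{k-1}{n-2}$ (compare Lemma~\ref{lemma:monotone}(2)), whose faces are precisely the $k$-free subsets of $\overline S$. Alternatively one argues directly: no $(k+1)$-crossing uses two edges at $n$; a $(k+1)$-crossing using none of them would be a Ferrers $(k+1)$-crossing inside $\Supp(\delta(v))$, impossible once $\Supp(\delta(v))$ is $k$-free because in an order ideal any $(k+1)$-crossing contains a $k$-crossing (discard the last edge; the supremum only decreases, hence stays in $S$); and a $(k+1)$-crossing using exactly one edge $\{i,n\}$ corresponds, under $\phi^{-1}$, to a $k$-crossing of $\Supp(\delta(v))$ whose supremum lies in $\overline S$ precisely because such an edge at $n$ crossing the whole family exists.

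I expect this last bookkeeping — matching ``there is a diagonal at vertex $n$ crossing the given family'' with ``the supremum of the $\phi^{-1}$-images is a legal element of $\overline S$'' — to be the only genuinely delicate point, and it is of the same index-juggling nature as the arguments already carried out in Lemma~\ref{initial} and Theorem~\ref{thm:grobcone}. Everything else is a formal consequence of the results of this section together with the Pfaffian tropicalization theorem.
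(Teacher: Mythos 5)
Your proposal follows the paper's proof essentially step for step: transport $v$ to $v_P+\epsilon\,\phi(v)$, invoke the Pfaffian theorem, compute $\Delta(v_P)$ (supported on the edges $\{i,n\}$), and reduce to the combinatorial equivalence between $(k+1)$-freeness of $\Supp\Delta(v_P)\cup\phi(\Supp\delta(v))$ and $k$-freeness of $\Supp\delta(v)$, which you settle by the same bookkeeping the paper uses (at most one edge of a crossing can touch $n$, and the supremum condition for the remaining $k$ edges is exactly the existence of an edge at $n$ crossing them all). Only two phrasings need tightening: since $M_k(n)$ and $\PV{k}{n}$ are tropical \emph{pre}varieties, the equivalence $v\in M_k(n)\Leftrightarrow v_P+\epsilon\,\phi(v)\in\PV{k}{n}$ should be argued generator by generator via Lemma~\ref{initial} (as in Corollary~\ref{grobcone}) rather than via the $\trop(\ini_w I)$ fact for varieties; and a circular $(k+1)$-crossing avoiding $n$ need not itself be a Ferrers $(k+1)$-crossing (its supremum can fall outside $\overline S$ when $b_1=a_{k+1}+1$), though discarding the extremal edge, as you do, still produces the required Ferrers $k$-crossing.
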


\begin{proof}
	As we already know, $v\in\Grob_k(n)$ is equivalent to $v_P+\epsilon \phi(v)\in\Grob_k^P(n)$ for small $\epsilon$.
	\begin{align*}
		v\in\Mplus{k}{n} & \Leftrightarrow \ini_v(f) \text{ is not a monomial for any $(k+1)$-size minor } f \\
		& \Leftrightarrow \ini_{v_P+\epsilon \phi(v)}(g) \text{ is not a monomial for any Pfaffian } g \\
		& \Leftrightarrow v_P+\epsilon \phi(v)\in\PVplus{k}{n} \\
		& \Leftrightarrow \Supp \Delta(v_P+\epsilon \phi(v))\text{ is $(k+1)$-free} \\
		& \Leftrightarrow \Supp (\Delta(v_P)+\epsilon \phi(\delta(v)))\text{ is $(k+1)$-free}
	\end{align*}
	Now we only need to show that $\Delta(v_P)+\epsilon \phi(w)$ has $(k+1)$-free support if and only if $w$ has $k$-free support, for $w\in\R^{\overline{S_k(n)}}$.
	
	We already know that $\Delta(v_P)_{ij}=0$ except if $j=n$, in which case $\Delta(v_P)_{in}=2$ (see proof of Theorem \ref{thm:grobcone}).
	
	If there is a (circular) $(k+1)$-crossing in $\Delta(v_P)+\epsilon \phi(w)$, only one coordinate can have $j=n$, and the rest are zero in $\Delta(v_P)$ and consequently nonzero in $\phi(w)$, so we have a $k$-crossing in $\phi(w)$ not including $n$, which translates to a (bipartite) $k$-crossing in $w$.
	
	Conversely, if there is a $k$-crossing in $w$, it translates to a $k$-crossing in $\phi(w)$ that does not include $n$. Let this be $\{a_1,b_1\},\{a_2,b_2\},\ldots,\{a_k,b_k\}$ with $a_1<\ldots<a_k$, $b_1<\ldots<b_k<n$ and $b_1-a_k\ge 2$. This implies that $\{a_1,b_1\},\{a_2,b_2\},\ldots,\{a_k,b_k\},\{a_k+1,n\}$ is a $(k+1)$-crossing in $\Delta(v_P)+\epsilon \phi(w)$.
\end{proof}

\begin{theorem}\label{freevar}
	$\Mplus{k}{n}\subset\trop(\M_k(n))$. Moreover, $\Mplus{k}{n}\subset\trop^+(\M_k(n))$.
\end{theorem}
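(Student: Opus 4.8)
The plan is to deduce this from the Pfaffian case, Theorem~\ref{freevarpf}, by transporting everything through the relabeling $\phi$ exactly as in the proofs of Corollary~\ref{grobcone} and Theorem~\ref{freeprev}. Recall that Lemma~\ref{initial} gives $\phi(I_k(n))=\ini_{v_P}(I_k^P(n))$, so by the standard identity for iterated initial ideals one has $\ini_{v_P+\epsilon\phi(v)}(I_k^P(n))=\ini_{\phi(v)}(\phi(I_k(n)))$ for every $\epsilon>0$ small enough.

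First I would translate the two desired containments into the Pfaffian world. Since $\phi$ merely permutes a subset of the variables --- and the variables outside its image never occur in $\phi(I_k(n))$ --- it carries $\ini_v(I_k(n))$ isomorphically onto $\ini_{\phi(v)}(\phi(I_k(n)))$ and preserves both ``being monomial-free'' and ``containing no polynomial with all coefficients real and positive''. Combined with the displayed identity, this shows that for $v\in\Grob_k(n)$ and $\epsilon$ small, $v\in\trop(\M_k(n))$ if and only if $v_P+\epsilon\phi(v)\in\trop(\Pf{k}{n})$, and $v\in\trop^+(\M_k(n))$ if and only if $v_P+\epsilon\phi(v)\in\trop^+(\Pf{k}{n})$.

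Next I would observe that every $v\in\Mplus{k}{n}$ satisfies $v_P+\epsilon\phi(v)\in\PVplus{k}{n}$ for $\epsilon$ small: this is precisely the content of the first steps of the equivalence chain in the proof of Theorem~\ref{freeprev} ($v\in\Grob_k(n)$ gives $v_P+\epsilon\phi(v)\in\Grob_k^P(n)$ by Corollary~\ref{grobcone}, and $v\in M_k(n)$ gives, via the minor/Pfaffian correspondence of Lemma~\ref{initial}, that no Pfaffian of degree $k+1$ has a monomial initial form with respect to $v_P+\epsilon\phi(v)$). Then Theorem~\ref{freevarpf}, which states $\PVplus{k}{n}\subseteq\trop(\Pf{k}{n})$ and $\PVplus{k}{n}\subseteq\trop^+(\Pf{k}{n})$, together with the two equivalences just obtained, yields $v\in\trop(\M_k(n))$ and $v\in\trop^+(\M_k(n))$, as desired.

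The only genuinely delicate point --- and the step I expect to need the most care --- is the verification in the second paragraph that passing through $\phi$, which is injective but not surjective on variable sets (so $\phi(v)$ assigns weight zero to the ``extra'' coordinates of $\R^\bnn$), really does preserve both tropical conditions; this is routine and parallels the manipulations already carried out for $\Grob_k(n)$ in Corollary~\ref{grobcone}. Beyond that bookkeeping, the only new ingredient over Theorem~\ref{freeprev} is the invocation of Theorem~\ref{freevarpf} to upgrade from the prevariety $M_k(n)$ to the variety $\trop(\M_k(n))$ and its positive part.
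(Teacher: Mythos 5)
Your proposal is correct and follows essentially the same route as the paper: the paper's proof likewise passes from $v\in\Mplus{k}{n}$ to $v_P+\epsilon\phi(v)\in\PVplus{k}{n}$, invokes Theorem~\ref{freevarpf}, and then uses the identity $\ini_{v_P+\epsilon\phi(v)}(I_k^P(n))=\ini_{\phi(v)}(\ini_{v_P}(I_k^P(n)))=\phi(\ini_v(I_k(n)))$ to transport monomial-freeness (and positivity) back through the relabeling $\phi$. The only cosmetic difference is that you phrase the transport step as a two-way equivalence, while the paper only uses the one direction it needs.
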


\begin{proof}
	If $v\in\Mplus{k}{n}$, $v_P+\epsilon \phi(v)\in\PVplus{k}{n}$ for some $\epsilon>0$. By Theorem \ref{freevarpf}, $v_P+\epsilon \phi(v)\in\trop(\Pf{k}{n})$, so $\ini_{v_P+\epsilon \phi(v)}(I_k^P(n))$ contains no monomials. But $\ini_{v_P+\epsilon \phi(v)}(I_k^P(n))=\ini_{\phi(v)}(\ini_{v_P}(I_k^P(n)))=\phi(\ini_v(I_k(n)))$, so $v\in\trop(\M_k(n))$. The proof of the second part is analogous.
\end{proof}

Theorems \ref{freeprev} and \ref{freevar} are also true if we replace $S_k(n)$ by any $k$-full $S$, because all them can be obtained from $S_k(n)$ for some $n$.

To conclude this section, we see that projecting the $\Mplus{k}{n}$ to a complete fan, in order to realize the multiassociahedron, is equivalent to projecting a link of $\PVplus{k}{n}$ into a complete fan. This confirms Lemma \ref{lemma:monotone} that the $(k-1)$-associahedron in $n-2$ vertices is a link of the $k$-associahedron in $n$ vertices.

\section{Realizability in the moment curve}\label{sec:real}

In this section we turn our attention to realizability of $\OvAss{k}{2k+2}$ as a fan. 

\subsection{The case $n\le 2k+3$}

The case $n=2k+2$ is easy with what we know:

\begin{theorem}
	For $n=2k+2$, every configuration in the moment curve realizes $\OvAss{k}{2k+2}$ as a polytopal fan.
\end{theorem}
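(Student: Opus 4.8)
The plan is to observe that for $n=2k+2$ the entire combinatorial situation degenerates to a single circuit. First I would record the combinatorial picture. Since $n-k-1=k+1$, reduced bipartizations live on $K_{k+1,k+1}$, and $S_k(2k+2)=[k+1]\times[k+1]$ is the full grid; its only $(k+1)$-crossing (a set of $k+1$ incomparable elements with supremum in $S$) is the anti-diagonal $D=\{(i,k+2-i):i\in[k+1]\}$, because an antichain of size $k+1$ in $[k+1]^2$ is forced to use every value on each coordinate. Hence the $(k+1)$-free subsets of $[k+1]^2$ are exactly those not containing $D$; the maximal ones — the bipartized $k$-triangulations — are the graphs $K_{k+1,k+1}\setminus\{e\}$ with $e\in D$; every non-anti-diagonal edge lies in all of them and thus constitutes the irrelevant face; and therefore $\OvAss{k}{2k+2}$ is the boundary complex of a $k$-simplex (it has the $k+1$ vertices $D$, it is a $(k-1)$-sphere by Theorem~\ref{thm:sphere}, and a simplicial $(k-1)$-sphere on $k+1$ vertices is necessarily $\partial\sigma^k$).

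Next I would bring in the vector configuration $\VV=\{v_{ij}\}_{(i,j)\in[k+1]^2}$ of rows of the bipartite hyperconnectivity matrix of $2(k+1)$ points on the moment curve in $\R^k$. By the corollary following Lemma~\ref{lemma:extprod}, $K_{k+1,k+1}$ is a circuit — in fact for \emph{every} choice of distinct parameters, since its dependence is the tensor product of the two Vandermonde dependences among the left points and among the right points — and by Corollary~\ref{cor:signKd+1} the signs of that dependence are $\sign(\lambda_{ij})=(-1)^{i+j}$. The key point, and the place where $n=2k+2$ is used, is that on $D$ the exponent $i+j=k+2$ is constant, so all coefficients $\lambda_e$ with $e\in D$ share a single sign. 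Being a circuit, $K_{k+1,k+1}$ minus any edge is a basis (so here Theorem~\ref{thm:triangbases} holds for arbitrary moment-curve points, not just generic ones), whence the $k^2+k$ vectors indexed by the non-anti-diagonal edges are linearly independent; let $W$ be their span. The span of $\VV$ has dimension $k^2+2k$ (the rank of $K_{k+1,k+1}$, Theorem~\ref{thm:complete_bipartite}), so the quotient by $W$ is $k$-dimensional, and there the images $\bar v_e$ ($e\in D$) form a configuration of $k+1$ vectors, any $k$ of them a basis, whose unique linear dependence $\sum_{e\in D}\lambda_e\bar v_e=0$ has all coefficients of one sign.

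Finally I would conclude using Theorem~\ref{thm:fan}: the basis-collection condition is the observation above, the Interior Cocircuit Property is immediate because in any flip the two exchanged edges both lie in $D$ and hence receive the same sign in the circuit dependence, and the third condition is vacuous since $\partial\sigma^k$ has no codimension-two face whose link is a $5$-cycle. Thus $\VV$ embeds $\OvAss{k}{2k+2}$ as a complete simplicial fan in $\R^k$. Polytopality is then automatic: $k+1$ rays in $\R^k$ whose single dependence has all coefficients of the same sign are exactly the rays of the normal fan of a $k$-simplex (after a global sign change $\sum_{e\in D}\lambda_e\bar v_e=0$ with $\lambda_e>0$ puts the origin in the interior of $\operatorname{conv}\{\bar v_e\}$), so the fan is polytopal. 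There is essentially no hard step here; the only point requiring care is the constant-sign claim along $D$, which is where the hypothesis $n=2k+2$ enters, and for larger $n$ the relevant diagonals no longer have constant $i+j$, so this short argument breaks down.
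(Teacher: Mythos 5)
Your proposal is correct and follows essentially the same route as the paper's proof: bipartize the unique flip graph to the circuit $K_{k+1,k+1}$, read off the signs $(-1)^{i+j}$ from Corollary~\ref{cor:signKd+1}, observe that the flipping edges (the anti-diagonal, with $i+j$ constant) all get the same sign so ICoP holds, note the elementary-cycle condition is vacuous, and conclude polytopality because the result is the fan of a simplex. Your write-up merely fills in more detail (the explicit identification of $\OvAss{k}{2k+2}$ with $\partial\sigma^k$ and the quotient-space argument) than the paper's terser version.
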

\begin{proof}
	In this case we have only one flip graph, which is the complete graph. Bipartizing it, we get the complete bipartite graph $K_{k+1,k+1}$. The flipping edges are the diameters of the original graph, that become $(1,k+1),(2,k),\ldots,(k+1,1)$ after bipartizing. Corollary \ref{cor:signKd+1} predicts all the signs of the circuit: the flipping edges have the same sign, so the condition ICoP is satisfied.
	
	Respect to the condition about elementary cycles, there is nothing to prove here because all them have length 3. So the fan is realized in any position, and it is automatically polytopal because it is the fan of a simplex.
\end{proof}

Now we will find necessary and sufficient conditions so that the complete fan is realized for $n=2k+3$. First, a definition:

\begin{definition}
	Let $S$ be a set of $k+2$ points in the projective space $\R\proj^{k-1}$ in general position and let $a$, $b$, $c$ and $d$ be four of them. Let $T=S\setminus\{a,b,c,d\}$.
	The \emph{relative cross-ratio} of $a,b,c,d\in S$, denoted as $(a,b;c,d)_S$, is the cross-ratio of the four points obtained by conical projection of  $a,b,c,d$ with center at $T$ onto a projective line. Equivalently, it equals the cross-ratio of the four planes spanned by $Ta$, $Tb$, $Tc$, $Td$.
	That is,
	\[(a,b;c,d)_S=\frac{|acT|\cdot|bdT|}{|adT|\cdot|bcT|}\]
	where $|xyT|$ denotes the $k\times k$ determinant whose rows are $x$, $y$ and the points in $T$.
\end{definition}

In the case of points $\{t_i: i\in [n]\}$ in the moment curve, this formula simplifies to
\[(a,b;c,d)_\bt=\frac{(t_c-t_a)(t_d-t_b)}{(t_d-t_a)(t_c-t_b)}\]

Note that the bipartized graph for $n=2k+3$ has $k+2$ vertices in each side: only the central vertex $k+2$ is common to both sides (as the last in each side), each vertex $i<k+2$ becomes $i$ in the left side, and each $j>k+2$ becomes $2k+4-j$ in the right side.

\begin{definition}
	Given three relevant edges $\{i_l,j_l\}, l=1,2,3$ of the $(2k+3)$-gon, where $i_1<i_2<i_3<k+2<j_1<j_2<j_3$, and a position $\bt$ in the moment curve, we will say that the three edges are \textit{correctly located} if $(i_1,i_2;i_3,k+2)_\bt>(j_1',j_2';j_3',(k+2)')_\bt$, where $j_l'=2k+4-j_l$ is the new index of vertex $j_l$.
\end{definition}

Note that, despite its asymmetric appearance, being correctly located is a symmetric relation: if we reverse the order of the vertices, the relation becomes $(j_3',j_2';j_1',(k+2)')_\bt>(i_3,i_2,i_1,k+2)_\bt$, which is equivalent to the previous one.

\begin{lemma}\label{lemma:correct}
	Given four relevant edges $e_l=\{i_l,j_l\}, l=1,2,3,4$ of the $(2k+3)$-gon, where $i_1<i_2<i_3<i_4<k+2<j_1<j_2<j_3<j_4$, if $\{e_1,e_2,e_3\}$ and $\{e_2,e_3,e_4\}$ are correctly located, $\{e_1,e_2,e_4\}$ is also correctly located (and, by symmetry, $\{e_1,e_3,e_4\}$, but we do not need that).
\end{lemma}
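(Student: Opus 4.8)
The plan is to strip ``correctly located'' down to an elementary inequality between ratios of consecutive gaps of two increasing real sequences. The key observation is that, on the moment curve, the relative cross-ratio of four points depends only on the four parameters (by the moment-curve cross-ratio formula recalled above), and a cross-ratio in which one of the four arguments is held fixed linearizes to a plain ratio of differences after a suitable M\"obius substitution.

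First I would rewrite both cross-ratios appearing in the definition of ``correctly located'' via $(a,b;c,d)_\bt=\frac{(t_c-t_a)(t_d-t_b)}{(t_d-t_a)(t_c-t_b)}$. On the left the held-fixed argument is the central vertex $k+2$, so I set $p_l:=1/(t_{k+2}-t_{i_l})$ for $l=1,2,3,4$; since $t_{i_1}<t_{i_2}<t_{i_3}<t_{i_4}<t_{k+2}$ these are positive with $p_1<p_2<p_3<p_4$, and a one-line computation gives $(i_a,i_b;i_c,k+2)_\bt=\frac{p_c-p_a}{p_c-p_b}$ whenever $a<b<c$. On the right, writing $j_l'=2k+4-j_l$, the inequalities $k+2<j_1<j_2<j_3<j_4$ give $j_4'<j_3'<j_2'<j_1'<(k+2)'$, hence $t_{j_4'}'<\dots<t_{j_1'}'<t_{(k+2)'}'$; then $q_l:=1/(t_{(k+2)'}'-t_{j_l'}')$ are positive with $q_1>q_2>q_3>q_4$ and $(j_a',j_b';j_c',(k+2)')_\bt=\frac{q_c-q_a}{q_c-q_b}$, so setting $r_l:=-q_l$ produces an increasing sequence $r_1<r_2<r_3<r_4$ with $(j_a',j_b';j_c',(k+2)')_\bt=\frac{r_c-r_a}{r_c-r_b}$. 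Subtracting $1$ from both sides then gives, for $a<b<c$,
\[
\{e_a,e_b,e_c\}\text{ correctly located}\iff\frac{p_b-p_a}{p_c-p_b}>\frac{r_b-r_a}{r_c-r_b}.
\]

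Next I would pass to the consecutive gaps $P_i:=p_{i+1}-p_i$ and $R_i:=r_{i+1}-r_i$ for $i=1,2,3$, which are all positive because $p$ and $r$ are increasing. By the previous step the two hypotheses become $P_1R_2>P_2R_1$ and $P_2R_3>P_3R_2$, while the conclusion we want, that $\{e_1,e_2,e_4\}$ is correctly located, becomes $\frac{P_1}{P_2+P_3}>\frac{R_1}{R_2+R_3}$, i.e.\ $P_1(R_2+R_3)>R_1(P_2+P_3)$. Multiplying the two hypotheses (all quantities positive) gives $P_1R_3>P_3R_1$, and adding $P_1R_2>P_2R_1$ to this yields $P_1(R_2+R_3)>R_1(P_2+P_3)$, which is exactly the desired inequality.

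I expect the only genuinely delicate part to be the sign bookkeeping in the second paragraph: because the right-hand vertices $j_l'$ list the $j_l$ in reverse order, one must keep track of signs carefully so that, after the substitutions, both sides of ``correctly located'' end up expressed as gap ratios of \emph{increasing} sequences; once that normalization is in place the rest is the three-line positivity argument above. The symmetric assertion about $\{e_1,e_3,e_4\}$ (claimed in the statement but not used later) follows by the same scheme --- multiplying the two hypotheses to obtain $P_1R_3>P_3R_1$ and then adding $P_2R_3>P_3R_2$, which gives $(P_1+P_2)R_3>(R_1+R_2)P_3$ --- or, more conceptually, from the left--right symmetry of the relation recorded immediately after its definition.
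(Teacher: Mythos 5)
Your proof is correct and takes essentially the same route as the paper's: both exploit projective invariance to send the common point $k+2$ (and $(k+2)'$) to infinity, reducing ``correctly located'' to an elementary inequality between ratios of differences, and then finish by direct algebra (the paper normalizes $i_1,i_2$ to $0,1$ and chains inequalities in the remaining coordinates $a_3,a_4,b_3,b_4$; you keep all four parameters and work with consecutive gaps). Your gap formulation --- from $P_1R_2>P_2R_1$ and $P_2R_3>P_3R_2$ deduce $P_1R_3>P_3R_1$ by multiplying and then $P_1(R_2+R_3)>R_1(P_2+P_3)$ by adding --- is a slightly cleaner packaging that also makes the parenthetical claim about $\{e_1,e_3,e_4\}$ immediate.
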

\begin{proof}
	By a projective transformation of the left vertices and another for the right ones, we can make $t_{i_1}=t_{j_1}'=0$, $t_{i_2}=t_{j_2}'=1$ and $t_{k+2}=t_{k+2}'=\infty$. Denote $a_l=t_{i_l}$ and $b_l=t_{j_l}'$, for $l=3,4$. We must have $a_4>a_3>1$ and $b_4>b_3>1$.
	
	By hypothesis $(i_1,i_2;i_3,k+2)_\bt>(j_1',j_2';j_3',(k+2)')_\bt$, that is, $a_3/(a_3-1)>b_3/(b_3-1)$, which, as $a_3$ and $b_3$ are greater than 1, implies $a_3<b_3$.
	
	Also by hypothesis, $(i_2,i_3;i_4,k+2)_\bt>(j_2',j_3';j_4',(k+2)')_\bt$, that is,
	\[\frac{a_4-1}{a_4-a_3}>\frac{b_4-1}{b_4-b_3}\]
	\[b_3-b_4-b_3a_4>a_3-a_4-a_3b_4\]
	\[b_4+b_3a_4-a_4-a_3b_4<b_3-a_3<a_4(b_3-a_3)\]
	\[b_4-a_4-a_3b_4+a_3a_4<0\]
	\[(b_4-a_4)(1-a_3)<0\]
	As $a_3>1$, this implies $b_4>a_4$ and
	\[(i_1,i_2;i_4,k+2)_\bt=\frac{a_4}{a_4-1}>\frac{b_4}{b_4-1}=(j_1',j_2';j_4',(k+2)')_\bt\]
	as we wanted.
\end{proof}

\begin{lemma}
	Let $1\le i_1<i_2<k+2<j_1<j_2\le 2k+3$ be such that the bipartization of $C:=K_{2k+3}\setminus\{\{i_1,j_1\}, \{i_2,j_2\}\}$ is a circuit. Consider a configuration $\bt$ in the moment curve for the vertices of this bipartization and let $\lambda$ be the unique  (modulo a scalar factor) dependence for $C$ in the bipartite polynomial rigidity matrix $P_k(\bt)$. Let $\{i_3,j_3\}$ be the first edge in the odd path of $S\setminus \{\{i_1,j_1\}, \{i_2,j_2\}\}$ (that is, $\{i_1\pm 1,j_1\}$ or $\{i_1,j_1\pm 1\}$), and let $\{i_0,j_0\}$ be another edge in the same path at a distance $d$ from the first. Then, we have that
	\[
	\sign(\lambda_{i_0j_0})=(-1)^d\sign(\lambda_{i_3j_3})
	\]
	if and only if $\{e_0,e_1,e_2\}$ are correctly located.
\end{lemma}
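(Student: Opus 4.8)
The plan is to reduce the lemma to the already-proved cube obstruction (Theorem~\ref{cube}), using the coning machinery of Corollary~\ref{cor:signconing} to transport signs between the ``large'' circuit $C$ and a small $K_{4,4}$-minus-a-matching circuit.

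\medskip

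First I would analyze the structure of the circuit $C$. The bipartization of $K_{2k+3}\setminus\{\{i_1,j_1\},\{i_2,j_2\}\}$ lives on $k+2$ vertices per side, so removing the two nonedges from the complete bipartite graph $K_{k+2,k+2}$ gives a graph with exactly one edge more than a basis; Theorem~\ref{thm:bipbases} tells us that any $(k+1)$-free subgraph on those vertex sets is independent, and the point is that $C$ is a circuit precisely when the two removed edges force the corresponding dependence to be unique. As in the proof of Theorem~\ref{cube}, the support of the dependence $\lambda$ is concentrated on the symmetric difference of the two $k$-triangulations that differ by the flip of $\{i_1,j_1\}$ and $\{i_2,j_2\}$; by Proposition~\ref{prop:cycles} and the combinatorics of flips in $\OvAss{k}{2k+3}$, that support is an even cycle together with possibly an odd path — here the relevant piece is the ``odd path'' containing $\{i_3,j_3\}=\{e_3\}$ mentioned in the statement. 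Along this path, Lemma~\ref{lemma:chsign} (and its Corollary~\ref{cor:signKd+1}) controls how signs alternate: consecutive edges on the path differ in sign according to the parity of their separation, which is exactly the factor $(-1)^d$ in the statement. So the nontrivial content is the \emph{absolute} sign relation between $\{e_0,e_1,e_2\}$, i.e., whether $\lambda_{i_0j_0}$ has the sign dictated by pure alternation or the opposite one, and this is what the ``correctly located'' cross-ratio condition must detect.

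\medskip

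The key step is to project $C$ down to the cube circuit $K_{4,4}\setminus\{22',33',44'\}$ of Theorem~\ref{cube} by repeatedly applying Corollary~\ref{cor:signconing} in reverse: the extra $k-2$ vertices on each side (all the vertices other than $i_1,i_2,k+2$ on the left and $j_1',j_2',(k+2)'$ on the right, suitably) are exactly coning vertices, since in $K_{2k+3}$ minus two edges every vertex except the endpoints of the removed edges has full degree. Contracting by each such vertex multiplies the sign of an edge $(i,j)$ by $\sign((i-i_0)(j-j_0))$ where $(i_0,j_0')$ is the coned vertex; tracking the product of all these correction factors over the edges $e_0,e_1,e_2$ reduces the $2k$-dimensional cross-ratio condition (``correctly located'', phrased via $(i_1,i_2;i_3,k+2)_\bt$ and $(j_1',j_2';j_3',(k+2)')_\bt$) to the $1$-dimensional cross-ratio condition $(1,2;3,4)$ vs.\ $(1',2';3',4')$ of Theorem~\ref{cube}. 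Here the relative cross-ratio $(a,b;c,d)_S$ is designed precisely so that conical projection from the coning centers matches the cross-ratio that survives after all the contractions — the formula $(a,b;c,d)_\bt = (t_c-t_a)(t_d-t_b)/((t_d-t_a)(t_c-t_b))$ for moment-curve points makes this bookkeeping explicit. Once $C$ is reduced to the cube, Theorem~\ref{cube} says the sign of $\lambda_{11'}$ relative to $\lambda_{12'}$ flips exactly when the two one-dimensional cross-ratios cross, and unwinding the correction factors (which are themselves governed by whether each projected triple is correctly located, using the transitivity in Lemma~\ref{lemma:correct}) gives the claimed equivalence for $\{e_0,e_1,e_2\}$.

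\medskip

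The main obstacle I expect is the bookkeeping in the coning reduction: one has to check that the sequence of vertices being contracted can be chosen so that at every stage the graph stays a circuit (equivalently, that no contraction makes it independent or drops too much rank), and that the accumulated sign-correction factor, which is a product of many $\pm 1$'s indexed by the coned vertices, assembles into exactly the relative cross-ratio comparison rather than its reciprocal or negative. Getting the orientation conventions consistent — the reversal of the right-hand vertex order in the bipartization, the sign changes from rescaling columns in Corollary~\ref{cor:signconing}, and the asymmetric-but-symmetric nature of ``correctly located'' — is where the calculation is delicate, even though each individual step is routine. Lemma~\ref{lemma:correct} is the tool that lets us not worry about \emph{which} triples among the coned configuration are correctly located (it propagates the property), so the argument should close cleanly once the base case Theorem~\ref{cube} is in hand.
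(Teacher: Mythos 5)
Your plan matches the paper's proof essentially step for step: use Lemma~\ref{lemma:chsign} (via the degree-$(k+1)$ vertex) to account for the $(-1)^d$ alternation along the path, then strip off the full-degree vertices by repeated bipartite coning with the sign corrections of Corollary~\ref{cor:signconing}, landing on $K_{4,4}$ minus a near-matching, and finish with the cross-ratio criterion of Theorem~\ref{cube}, the relative cross-ratio being invariant under the contractions. The only cosmetic differences are that the paper first reaches $K_{4,4}\setminus\{22',31',44'\}$ and swaps $1'$ with $3'$ projectively to match Theorem~\ref{cube} exactly, and that Lemma~\ref{lemma:correct} is not actually needed here (it enters only later, in the proof of Theorem~\ref{thm:star}).
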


\begin{proof}
	Suppose first that $i_0<i_1<i_2<j_0<j_1=i_1+k+1<j_2=i_2+k+2$. Then, by the definition of star order, $i_3=i_1-1$, $j_3=j_1$, $i_0+j_0= i_1+j_1-d-1$.
	
	Bipartizing the circuit, we get $K_{k+2,k+2}$ minus three edges. In the resulting graph, the degree of $j_1$ is $k+1$ and by Lemma \ref{lemma:chsign},
	\[\sign(\lambda_{i_1-1,j_1})=(-1)^{i_1-i_0-1}\sign(\lambda_{i_0j_1})\]
	so the condition to be checked reduces to
	\[\sign(\lambda_{i_0j_0}\lambda_{i_0j_1})=(-1)^{d+1+i_1-i_0}=(-1)^{j_1-j_0}\]
	
	The circuit is obtained by a repeated bipartite coning from the graph of $K_{4,4}$ minus three edges, so that the original eight vertices become $\{i_0,i_1,i_2,k+2\}$ in the left side and $\{j_2',j_1',j_0',(k+2)'\}$ in the right side. That is, the three missing edges are $22'$, $31'$ and $44'$. By Corollary \ref{cor:signconing}, the condition reduces to $\sign(\lambda_{12}\lambda_{13})=-1$, because this sign is inverted $i_3-j_2-1$ times (once for each vertex added between $j_2$ and $i_3$).
	
	Swapping (via a projective transformation of the line) the positions of vertices $1'$ and $3'$ will not alter the signs in the first three vertices, because this is a linear transformation in dimension 2 followed by a rescaling that has the same sign in these three vertices (Proposition \ref{prop:invariance}). After the change, we are in the situation of Theorem \ref{cube}, and the condition is now $\sign(\lambda_{11}\lambda_{12})=-1$. This happens if and only if $(1,2;3,4)>(1',2';3',4')$. As contracting does not change the relative cross-ratios, this means $(i_1,i_2;i_3,k+2)_\bt>(j_1',j_2';j_3',(k+2)')_\bt$, as we want.
	
	The remaining cases lead, analogously, to $(i_2,i_3;k+2,i_1)_\bt<(j_2',j_3';(k+2)',j_1')_\bt$ and $(i_3,k+2;i_1,i_2)_\bt>(j_3',(k+2)';j_1',j_2')_\bt$. The third condition is the same as the first one, and the second one is also equivalent, because of the order of the vertices.
\end{proof}

We are now ready to prove the main result for $n=2k+3$. Remember that an \textit{octahedral} triangulation is one whose three missing edges are disjoint.

\begin{theorem}\label{thm:star}
	Let $\bt=(t_1,t_2 ,\dots ,t_{k+2},t_1',t_2',\dots,t_{k+2}')$ be a configuration in the moment curve in $\R^k$. The following are equivalent:
	\begin{enumerate}
		\item	$P_k(\bt)$ realizes $\OvAss{k}{2k+3}$ as a complete fan.
		\item For every octahedral $k$-triangulation $T$ the three edges not in $T$ are correctly located.
	\end{enumerate}
\end{theorem}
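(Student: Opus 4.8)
The plan is to invoke the characterization of fan realizations from Theorem~\ref{thm:fan} and check its three conditions for the vector configuration given by the rows of $P_k(\bt)$ in the case $n=2k+3$. By Theorem~\ref{thm:triangbases} the basis collection property holds for any configuration in the moment curve, so (1) of Theorem~\ref{thm:fan} is automatic. Since $\OvAss{k}{2k+3}$ is (the polar of) a cyclic polytope of dimension $2k-1$, all its codimension-$2$ links are triangles (Lemma~8.7 in \cite{PilSan} and the computation of dimension), so condition (3) on elementary $5$-cycles is vacuous. Hence the only thing to verify is the Interior Cocircuit Property (ICoP): for every flip between two $k$-triangulations $T_1,T_2$, the unique dependence supported on the bipartization of $T_1\cup T_2$ gives the same sign to the two flipping edges.

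First I would describe the flip structure. A flip in $\OvAss{k}{2k+3}$ removes one relevant edge $f$ and adds another $e$; the union $T_1\cup T_2$, viewed after (reduced) bipartization, is the complement in $K_{k+2,k+2}$ of the remaining missing edges, and is a circuit. The crucial distinction is between \emph{octahedral} triangulations (three pairwise-disjoint missing edges $\{i_1,j_1\},\{i_2,j_2\},\{i_3,j_3\}$) and non-octahedral ones (two of the missing edges share a vertex). In the non-octahedral case, the shared vertex has degree $k+1$ in $T_1\cup T_2$, so Corollary~\ref{cor:signKd+1} (the signs of the edges at a vertex of degree $k+1$ alternate) together with Lemma~\ref{lemma:chsign} pins down all signs combinatorially, and one checks directly that the two flipping edges always receive the same sign — so ICoP holds automatically for non-octahedral flips, irrespective of $\bt$. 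This reduces everything to octahedral flips.

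Next, for an octahedral flip, the two flipping edges are two of the ``long diagonals'' obtained by completing the octahedron, and the preceding lemma (the one computing $\sign(\lambda_{i_0j_0})$ in terms of being correctly located) applies. Concretely: the bipartization of $C=K_{2k+3}\setminus\{\{i_1,j_1\},\{i_2,j_2\},\{i_3,j_3\}\}$ is obtained by repeated bipartite coning from $K_{4,4}$ minus a perfect matching, i.e.\ from the $3$-cube, whose sign pattern is governed by Theorem~\ref{cube} via the cross-ratio comparison. Tracking the sign changes through the conings (Corollary~\ref{cor:signconing}, each added vertex contributing a factor $(-1)$ according to its position) and using Lemma~\ref{lemma:chsign} to relate the two flipping edges of the octahedron through the vertex of degree $k+1$ in $T_1\cup T_2$, I would show that the two flipping edges get the same sign \emph{if and only if} the three missing edges $\{e_1,e_2,e_3\}$ are correctly located in the sense of the definition. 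Summing over all octahedral triangulations then gives the equivalence between ICoP and condition (2) of the theorem.

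The main obstacle I expect is the bookkeeping of signs under the sequence of bipartite conings: one must carefully identify, for a fixed octahedral flip, which eight vertices play the roles of the original $K_{4,4}$-vertices, which vertices are ``inserted'' between them and hence flip signs (via Corollary~\ref{cor:signconing}), and then match the resulting sign of the $11'$-type edge (from Theorem~\ref{cube}) against the sign of the flipping edge read off through the degree-$(k+1)$ vertex. A secondary subtlety is checking that condition~(2), phrased only for octahedral triangulations, really does suffice — i.e.\ that non-octahedral flips impose no constraint — and that the ``correctly located'' condition is the same for both flipping edges of a given octahedron, which follows from the symmetry remark after the definition of correctly located together with Lemma~\ref{lemma:correct}. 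Once these sign computations are organized, the equivalence (1)$\Leftrightarrow$(2) is immediate from Theorem~\ref{thm:fan}.
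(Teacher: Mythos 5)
Your overall strategy (reduce to Theorem~\ref{thm:fan}, dispose of non-octahedral flips via the degree-$(k+1)$ vertex and Corollary~\ref{cor:signKd+1}, and convert octahedral flips into the cube/cross-ratio computation of Theorem~\ref{cube} via repeated bipartite coning) is the same as the paper's, but two of your reductions are wrong. First, the basis-collection property is \emph{not} automatic: Theorem~\ref{thm:triangbases} concerns the \emph{generic} bipartite hyperconnectivity matroid, whereas here $\bt$ is an arbitrary configuration on the moment curve. Theorem~\ref{thm:Desargues} exhibits, for $k=3$ and $n=9=2k+3$, explicit moment-curve positions for which an octahedral $3$-triangulation has a \emph{dependent} reduced bipartization; this happens exactly when a cross-ratio equality holds, i.e.\ on the boundary of the ``correctly located'' condition. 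So the basis property must be tied to the strictness of the inequalities in condition (2), not waved away by appeal to genericity.

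Second, the elementary $5$-cycle condition of Theorem~\ref{thm:fan}(3) is not vacuous. Your claim that all codimension-$2$ links of $\OvAss{k}{2k+3}$ are triangles is false: a codimension-$2$ face omits five of the $2k+3$ relevant edges and its link can be a $5$-cycle (a count of incidences between facets and codimension-$2$ faces in the boundary of the relevant cyclic polytope already shows the average link length exceeds $3$). The paper devotes a separate paragraph to these $5$-cycles, and handling them is precisely the purpose of Lemma~\ref{lemma:correct} (if $\{e_1,e_2,e_3\}$ and $\{e_2,e_3,e_4\}$ are correctly located then so is $\{e_1,e_2,e_4\}$), which you invoke only in passing and for a different purpose. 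Without this step the implication $(2)\Rightarrow(1)$ is incomplete. The remainder of your plan --- the sign bookkeeping through the conings and the reduction of ICoP to octahedral flips --- does match the paper's argument.
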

\begin{proof}
	If (1) holds, the condition ICoP is satisfied in the flips from an octahedral triangulation. In particular, with the notations of the previous Lemma, the flip from $K_{2k+3}\setminus\{\{i_0,j_0\},\{i_1,j_1\},\{i_2,j_2\}\}$ that removes $\{i_3,j_3\}$ and inserts $\{i_0,j_0\}$, with $d$ even. By the Lemma, this is equivalent to saying that the three edges are correctly located.
	
	Now suppose that (2) holds. Then, there are two types of flips: those whose two missing edges share a vertex and the rest. For the first case, the bipartization contains a $K_{k+1,k+1}$ and we already know that the signs of the flipped edges are correct. For the second, the two missing edges are disjoint. If one of them contains the vertex $k+2$, the bipartization contains again a $K_{k+1,k+1}$. Otherwise, we are in the conditions of the Lemma. Applying it to all edges with $d$ even, that are exactly the edges that can be removed from the graph to get a triangulation, all of them need to have the same sign, hence the condition ICoP also holds here.
	
	It only remains to see the condition about elementary cycles. Let $\{e_1,e_2,e_3,e_4,e_5\}$ an elementary cycle of length 5, which means that all the paths left between the edges are even. If two edges share a vertex, for example $e_1$ and $e_2$, the flip $K_{2k+3}\setminus\{e_1,e_2\}$ contains a $K_{k+1,k+1}$, and the signs of the other three edges are automatically correct. The same happens if $k+2$ is a vertex of any edge.
	
	In the remaining cases, $\{e_1,e_2,e_3\}$ and $\{e_2,e_3,e_4\}$ are correctly located by hypothesis. By Lemma \ref{lemma:correct}, $\{e_1,e_2,e_4\}$ is also correctly located. By the Lemma, in the flip $K_{2k+3}\setminus\{e_1,e_2\}$, $e_4$ has opposite sign to $e_3$ and $e_5$, and we are done.
\end{proof}

\begin{corollary}\label{cor:2k+3}
	\begin{itemize}
		\item For $k=2$ and $n=7$, any choice of points $(t_1,t_2,t_3,t_4,t_1',t_2',t_3',t_4')$ realizes $\OvAss{2}{7}$ as a fan.
		\item For any $k$, there is a choice of points that realizes $\OvAss{k}{2k+3}$ as a fan.
	\end{itemize}
	\begin{proof}
		For $k=2$, the stated condition trivially holds in any position because there are no octahedral triangulations.
		
		For the second part, take $t_{k+2}$ and $t_{k+2}'$ very big, $t_{k+1}=t_{k+1}'=1$, $t_k=t_k'=0$ and $t_i$ lexicographically smaller than $t_{i+1}$. Then, for any octahedral triangulation, in the corresponding inequality the left hand side $(i_1,i_2;i_3,k+2)_\bt$ is large and the right hand side $(j_1',j_2';j_3',(k+2)')_\bt$ is close to 1 (because $j_1'$ and $j_2'$ are relatively close).
	\end{proof}
\end{corollary}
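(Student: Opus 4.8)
The plan is to apply Theorem~\ref{thm:star}, which reduces realizing $\OvAss{k}{2k+3}$ as a complete fan to a finite check: for every \emph{octahedral} $k$-triangulation $T$ of the $(2k+3)$-gon, the three relevant diagonals missing from $T$ must be correctly located. Each of the two bullets then becomes, respectively, a matter of eliminating or of satisfying this list of cross-ratio inequalities.

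For the first bullet ($k=2$, $n=7$) the point to establish is that there is \emph{no} octahedral $2$-triangulation of the heptagon, so that condition~(2) of Theorem~\ref{thm:star} is vacuous and every configuration $\bt$ works. The relevant diagonals here are the seven ``long'' diagonals $\{i,i+3\}$, $i\in\Z_7$, and since $\OvAss{2}{7}$ has dimension $3$ and $\binom{7}{2}-2\cdot 7=7$ vertices, every $2$-triangulation uses exactly four of them. A short combinatorial check shows that the only triples of pairwise disjoint long diagonals are the seven rotations of $\{14,25,36\}$, and for each of these the four complementary diagonals contain a $3$-crossing (for instance $\{47,15,26\}$ inside $\{47,15,26,37\}$); hence those four diagonals are not $3$-free and cannot be the long-diagonal part of a $2$-triangulation. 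Thus no $2$-triangulation omits three pairwise disjoint diagonals.

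For the second bullet I would exhibit a single configuration and verify Theorem~\ref{thm:star}(2) directly. Take the symmetric points $t_i=t'_i$ with $t_k=0$, $t_{k+1}=1$, $t_{k+2}=R$, and $t_{k-m}=-N^m$ for $m=1,\dots,k-1$, where $N$ is a large fixed constant and $R$ is then chosen much larger than all the $N^m$. The structural fact that makes this work is that for $n=2k+3$ every relevant diagonal has length exactly $k+1$, so a missing edge $\{i_l,j_l\}$ with $i_l<k+2<j_l$ satisfies $i_l+j'_l\in\{k+2,k+3\}$, where $j'_l=2k+4-j_l$; hence for an octahedral triple avoiding the central vertex one has $i_1<i_2<i_3\le k+1$ and $j'_3<j'_2<j'_1\le k+1$, in particular $i_1\le k-1$ and $j'_3\le k-1$. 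As $R\to\infty$ the relation ``correctly located'' degenerates to
\[
\frac{t_{i_3}-t_{i_1}}{t_{i_3}-t_{i_2}}\;>\;\frac{t_{j'_1}-t_{j'_3}}{t_{j'_2}-t_{j'_3}} .
\]
With the chosen hierarchy of scales both the numerator and the denominator on the right are dominated by the single term $-t_{j'_3}$, so the right side stays between $1$ and $1+\frac{2}{N-1}$, while the left side is at least of order $N$ (roughly $N^{i_2-i_1}$ when $i_2\le k-1$, and $1+N^{k-i_1}$ when $i_2=k$). Choosing $N$ large (a small explicit bound like $N\ge 4$ is enough) makes the inequality hold for every octahedral triple; being strict, it survives when $R$ is finite but large enough, and Theorem~\ref{thm:star} then yields the fan. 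An octahedral triangulation having a missing edge through the vertex $k+2$ imposes no condition of this type, as in the proof of Theorem~\ref{thm:star}: the associated flip lies in a copy of $K_{k+1,k+1}$ whose circuit signs are pinned down by Corollary~\ref{cor:signKd+1}.

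The only place that needs genuine care is the uniformity in the second bullet: one must make sure that the ``$R\to\infty$'' degeneration, together with the crude telescoping estimates on $t_{i_3}-t_{i_2}$ and $t_{j'_2}-t_{j'_3}$, leaves a positive gap for \emph{all} octahedral triples simultaneously and not merely the generic one. Fixing $N$ first and only then choosing $R$ enormous makes this bookkeeping routine. The first bullet is a finite verification with no real difficulty.
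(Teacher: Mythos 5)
Your proof is correct and follows essentially the same route as the paper: invoke Theorem~\ref{thm:star}, observe that the heptagon admits no octahedral $2$-triangulation (which you verify explicitly, whereas the paper only asserts it), and for general $k$ choose a hierarchy of scales with $t_{k+2}$ huge and the earlier parameters ``lexicographically'' separated so that the left cross-ratio is of order $N$ while the right one stays near $1$. Your version merely makes the paper's sketch quantitative (the explicit $-N^m$ scales and the $N\ge 4$, then $R$ large, order of quantifiers), so no further comment is needed.
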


For $k\ge 3$, we can now prove Theorem \ref{thm:2k+6}, because any subset with $2k+3$ vertices should also realize the fan.

\begin{proof}[Proof of Theorem \ref{thm:2k+6}]
	First suppose that $k=3$ and $n=12$. This will account for all cases with $n\ge 12$, because all of them contain $\OvAss{3}{12}$.
	
	Consider the subconfiguration formed by the $9$ vertices ${1,2,3,5,6,7,8,9,10}$, where $6$ is the central vertex and $\{1,7\}$, $\{3,8\}$ and $\{5,10\}$ are three missing edges in an octahedral $3$-triangulation.
	By Theorem \ref{thm:star}, the three edges are correctly located, that is:
	\[(1,3;5,6)>(6',5';3',7')\]
	In terms of the parameters,
	\[\frac{(t_5-t_1)(t_6-t_3)}{(t_6-t_1)(t_5-t_3)}>\frac{(t_7'-t_5')(t_6'-t_3')}{(t_7'-t_6')(t_5'-t_3')}\]
	Using that the parameters are in increasing order,
	\[\frac{t_6-t_3}{t_5-t_3}>\frac{t_6'-t_3'}{t_5'-t_3'}\]
	Now we repeat the argument with the subconfiguration symmetric to the previous one, so the $t$'s get swapped with the $t'$'s, and we get
	\[\frac{t_6'-t_3'}{t_5'-t_3'}>\frac{t_6-t_3}{t_5-t_3}\]
	This is a contradiction.
	
	Now we will get a similar contradiction for $k\ge 4$ and $n\ge 2k+4$. As before, we can take $n=2k+4$.
	
	Consider the subconfiguration formed by the first $2k+3$ vertices. The complete graph in those vertices minus the three edges $\{2,k+3\}$, $\{3,k+5\}$ and $\{k+1,2k+2\}$ is an octahedral $k$-triangulation (note that, as $k\ge 4$, $2k+2>k+5$).
	
	Same as before, the three edges are correctly located and
	\[(2,3;k+1,k+2)>((k+2)',k';3',(k+3)')\]
	\[\frac{(t_{k+1}-t_2)(t_{k+2}-t_3)}{(t_{k+2}-t_2)(t_{k+1}-t_3)}>\frac{(t_{k+3}'-t_k')(t_{k+2}'-t_3')}{(t_{k+3}'-t_{k+2}')(t_k'-t_3')}\]
	This implies
	\[\frac{t_{k+2}-t_3}{t_{k+1}-t_3}>\frac{t_{k+2}'-t_3'}{t_k'-t_3'}>\frac{t_{k+2}'-t_3'}{t_{k+1}'-t_3'}\]
	and we are done by applying symmetry.
\end{proof}

\subsection{Experimental results}
In this section we report on some experimental results. In all of them we choose real parameters $\bt= \{t_1< t_2<\dots<t_{n-k-1},t_1'<t_2'<\ldots<t_{n-k-1}'\}$ and computationally check whether the configuration of rows of $P_k(\bt)$ realizes $\OvAss{k}{n}$ first as a collection of bases, then as a complete fan, and finally as the normal fan of a polytope. Note that, after the results in the previous section, the only fans that we may expect to realize in this way are those with $k=2$ and those with $k=3$ and $n\le 11$.

This is similar to \cite[section 4.3]{CreSan:Multi}. For the experiments we have written python code which, with input $k$, $n$ and the parameters $\bt$, first constructs the set of all $k$-triangulations and then checks the three levels of realizability as follows:
\begin{enumerate}
	\item Realizability as a collection of bases amounts to computing the rank of the submatrix $P_k(\bt)|_T$ corresponding to each $k$-triangulation $T$.
	\item For realizability as a fan we first check the ICoP  property, which amounts to computing the signs of certain dependences among rows  of $P_k(\bt)$. Then, for a particular $k$-triangulation (the so-called \emph{greedy} one) we check that the sum of its rays is not contained in the cone of any other $k$-triangulation. This property, once we have ICoP, is equivalent to being realized as a fan, by parts (2) and (3) of \cite[Theorem~2.11]{CreSan:Multi}.%
	
	The \emph{greedy} $k$-triangulations is the (unique) one containing all the irrelevant edges and the complete bipartite graph $[1,k]\times [k+1,n]$, and only those. It is obvious that these edges do not contain any $(k+1)$-crossing and we leave it to the reader to verify that the number of relevant ones is indeed $k(n-2k-1)$.
	
	\item For realizability as a polytope we then need to check feasibility of a linear system of inequalities:
	\begin{lemma}[{\cite[Lemma 2.14]{CreSan:Multi}}]
		\label{lemma:inequalities}
		Let $\Delta$ be a simplicial complex with vertex set $V$ and dimension $D-1$, and assume it is a 
		triangulation of a totally cyclic vector configuration $\VV\subset \R^D$. Then,
		a lifting vector $(f_{i})_{i\in V}$ is valid if and only if for every facet $\sigma\in \Delta$ and element $a\in V\setminus \sigma$ the inequality
		\[
			\sum_{i\in C}\omega_{i}(C)f_{i}>0
		\]
		holds, where $C=\sigma \cup\{a\}$ and $\omega(C)$ is the vector of coefficients in the unique (modulo multiplication by a positive scalar) linear dependence in $\VV$ with support in $C$, with signs chosen so that $\omega_{a}(C)>0$ for the extra element.
	\end{lemma}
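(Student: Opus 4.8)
\medskip
\noindent\emph{Proof proposal.} The plan is to unwind the meaning of a \emph{valid} lifting and reduce everything to a single circuit computation, with no local-to-global convexity argument needed. Following \cite{CreSan:Multi}, I would take ``$f=(f_i)$ is valid'' to mean that the lift $v_i\mapsto\hat v_i:=(v_i,f_i)\in\R^{D+1}$ induces $\Delta$ as its regular subdivision of the configuration $\VV$; writing $\Delta_f$ for that subdivision, validity is the equality $\Delta_f=\Delta$ (the opposite orientation convention for ``lower'' faces merely flips all inequalities, which is absorbed into the normalization $\omega_a(C)>0$). I would then recall the standard dictionary for regular subdivisions (see \cite{triangbook}): the maximal cells of $\Delta_f$ are the projections of the lower faces of $\mathrm{cone}(\hat\VV)$, and a set $U\subseteq V$ indexes a maximal cell exactly when there is a linear functional $\psi$ with $\psi(v_i)\le f_i$ for all $i\in V$ and $\{i:\psi(v_i)=f_i\}=U$. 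Total cyclicity of $\VV$ enters only to guarantee that $\Delta_f$, like $\Delta$, is a complete fan.

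The computational heart would be the following. For a facet $\sigma$ of $\Delta$ the $D$ vectors $\{v_i:i\in\sigma\}$ form a linear basis (they span a full-dimensional cone of a triangulation), so there is a unique linear functional $\ell_\sigma$ with $\ell_\sigma(v_i)=f_i$ for $i\in\sigma$. Given $a\in V\setminus\sigma$, put $C=\sigma\cup\{a\}$ and let $\omega=\omega(C)$ be its linear dependence, normalized by $\omega_a>0$ (with $\omega_a\ne 0$ since $\sigma$ is independent). Applying $\ell_\sigma$ to $\sum_{i\in C}\omega_i v_i=0$ gives $\sum_{i\in\sigma}\omega_i f_i+\omega_a\ell_\sigma(v_a)=0$, hence
\[
	f_a-\ell_\sigma(v_a)=\frac{1}{\omega_a}\,\sum_{i\in C}\omega_i(C)\,f_i .
\]
Since $\omega_a>0$, the inequality $\sum_{i\in C}\omega_i(C)f_i>0$ is \emph{equivalent} to $\ell_\sigma(v_a)<f_a$, i.e. to $\hat v_a$ lying strictly above the hyperplane of $\R^{D+1}$ spanned by $\{\hat v_i:i\in\sigma\}$ (the graph of $\ell_\sigma$).

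With this identity in hand, both implications are immediate. For ``$\Rightarrow$'': if $f$ is valid then $\mathrm{cone}(\sigma)$ is a maximal cell of $\Delta_f$, its supporting functional must be $\ell_\sigma$ (forced because $\sigma$ is a basis), and $\ell_\sigma(v_a)<f_a$ for every $a\notin\sigma$ (otherwise $\mathrm{cone}(\sigma\cup\{a\})$ would be a cell of $\Delta_f=\Delta$ strictly containing $\mathrm{cone}(\sigma)$); the displayed identity then gives the inequality. For ``$\Leftarrow$'': if all inequalities hold, the identity says $\ell_\sigma(v_i)\le f_i$ for every $i\in V$, with equality precisely on $\sigma$, so $\ell_\sigma$ certifies that the full-dimensional cone $\mathrm{cone}(\sigma)$ is a maximal cell of $\Delta_f$; hence every maximal cone of $\Delta$ is a maximal cone of $\Delta_f$, and since the maximal cones of a complete fan have pairwise disjoint interiors, $\Delta_f=\Delta$. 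I expect the only delicate points to be bookkeeping rather than depth: fixing the ``lower faces'' orientation so that it matches the sign $\omega_a(C)>0$, justifying that the facet functional is forced to be $\ell_\sigma$, and the closing remark that a complete fan containing all maximal cones of another complete fan coincides with it. Crucially, no convexity / local-to-global step is needed, precisely because the hypothesis ranges over \emph{all} pairs $(\sigma,a)$ rather than only the walls of $\Delta$.
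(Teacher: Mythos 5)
The paper itself gives no proof of this lemma—it is quoted verbatim from \cite[Lemma 2.14]{CreSan:Multi}—so there is no in-paper argument to compare against; judged on its own, your proof is correct and is the standard one. The key identity $\sum_{i\in C}\omega_{i}(C)f_{i}=\omega_{a}(C)\bigl(f_{a}-\ell_\sigma(v_a)\bigr)$ correctly translates each inequality into ``$\hat v_a$ lies strictly above the lifted hyperplane of $\sigma$'', i.e.\ into $\ell_\sigma$ certifying $\sigma$ as a cell of the regular subdivision $\Delta_f$, and your closing observations (the certifying functional is forced to equal $\ell_\sigma$ because $\sigma$ is a basis, and two complete fans sharing all full-dimensional cones of one of them coincide) are exactly the routine points needed; as you note, the local-to-global convexity step is avoided precisely because the hypothesis quantifies over all pairs $(\sigma,a)$ rather than only over walls.
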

	
	Here, without loss of generality we can assume that the lifting vector $f_{ij}$ is zero in all edges of a particular $k$-triangulation, and we again use the greedy one. This reduces the number of variables in the feasibility problem from $n(n-1)/2-k(k+1)$ to $(n-2k)(n-2k-1)/2$, a very significant reduction for the values of $(n,k)$ where we can computationally construct $\OvAss{k}{n}$. Apart of the computational advantage, it saves space when displaying a feasible solution; in all the tables in this section we show only the non-zero values of $f_{ij}$, which are those of relevant edges  contained in $[k+1,n]$. We do this in the tables in this section.
	Note that, as a by-product, taking all the $f_{ij}$'s of a particular $k$-triangu\-lation equal to zero makes the rest strictly positive.
\end{enumerate}

\begin{remark}
	\label{rem:monotone}
	If a choice of parameters realizes $\OvAss{k}{n}$ (at any of the three levels) for a certain pair $(k,n)$ then deleting any $j$ of the parameters (in each side) the same choice realizes $\OvAss{k'}{n-j}$ for any $k'$ with $ k-j/2\le k'\le k$. This follows from Lemma~\ref{lemma:monotone} plus the fact that each of the three levels of realization is preserved by taking links.
\end{remark}

Our first experiment is taking equispaced parameters. Since an affine transformation in the space of parameters produces a linear transformation in the rows of $P_k(\bt)$, we take without loss of generality  $\bt=(1,2,3,\dots,n)$. We call these the \emph{standard positions} along the parabola.

For $k\ge 3$ and $n\ge 2k+3$ we show in Theorem \ref{thm:Desargues} that standard positions do not even realize $\OvAss{k}{n}$ as a collection of bases. Hence, we only look at $k=2$.

\begin{lemma}
	\label{lemma:standard}
	Let $\bt=\{1,2,\dots,n\}$ be standard positions for the parameters. Then:
	\begin{enumerate}
		\item Standard positions for $P_{2}(\bt)$ realize $\OvAss{2}{n}$ as the normal fan of a polytope if and only if $n\le 8$.
		
		\item The near-lexicographic positions  $t_i=2^{(i-1)^2}$ for $P_{2}(\bt)$ realize $\OvAss{2}{10}$ as the normal fan of a polytope.
		
		\item Standard positions for $P_{4}(\bt)$ realize $\OvAss{2}{n}$ as a complete fan for all $n\le 13$.
	\end{enumerate}
\end{lemma}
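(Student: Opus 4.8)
The plan is to treat all three parts as finite computations: for each explicit list of parameters I would run the three-step verification described at the start of this subsection --- compute the rank of $P_k(\bt)|_T$ over all $k$-triangulations $T$ to test the basis-collection property, compute the signs of the circuit dependences of $P_k(\bt)$ across all flips and all elementary $5$-cycles to test the two sign conditions of Theorem~\ref{thm:fan}, and, after normalizing the greedy $k$-triangulation to carry the zero lifting, solve the linear feasibility system of Lemma~\ref{lemma:inequalities} to test polytopality. The first reduction is via Remark~\ref{rem:monotone}: each of the three levels of realizability is inherited by links, and deleting the largest parameter on each side of a standard $n$-configuration gives the standard $(n-1)$-configuration, so each claim reduces to checking a single value of $n$.

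For part~(1), the ``if'' direction I would reduce to verifying that the standard positions realize $\OvAss{2}{8}$ as the normal fan of a polytope --- check that every $2$-triangulation of the octagon is a basis of $P_2(\bt)$, that ICoP and the elementary-cycle condition hold, and exhibit one feasible lifting vector (to be recorded in the accompanying table) --- after which realizability for every $n\le 8$ follows by monotonicity. For the ``only if'' direction I would show that standard positions do \emph{not} realize $\OvAss{2}{9}$ as a polytope and then push non-realizability up to all $n\ge 9$ by monotonicity. I expect the obstruction at $n=9$ to sit at the polytopality level: the configuration should still satisfy ICoP (hence realize $\OvAss{2}{9}$ as a complete fan, consistently with Conjecture~\ref{conj:fan}), so what fails is feasibility of the system in Lemma~\ref{lemma:inequalities}. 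I would certify this failure with a Farkas combination --- an explicit nonnegative combination of finitely many of those strict inequalities in which every lifting variable cancels, leaving an impossible $0>0$; if instead a sign condition already fails, the certificate is just the offending flip together with its wrongly-signed circuit.

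Part~(2) is a single finite check with $t_i=2^{(i-1)^2}$: verify that every $2$-triangulation of the decagon is a basis of $P_2(\bt)$, that ICoP and the elementary-cycle condition hold, and display one feasible lifting vector for Lemma~\ref{lemma:inequalities}. The point of these near-lexicographic positions is that the fast growth of the $t_i$ forces the relevant circuit signs to agree with the predictions of the lexicographic order of the parameters, which is exactly what rescues feasibility where the equispaced choice fails. For part~(3) I would use the matrix $P_4(\bt)$ together with the fact that $\OvAss{2}{n}$ appears as an iterated link inside a $4$-associahedron (Lemma~\ref{lemma:monotone}), so that realizing the latter with $P_4$ and passing to the link realizes $\OvAss{2}{n}$ with $P_4$; by Remark~\ref{rem:monotone} it then suffices to verify that the rows of $P_4$ at standard positions realize $\OvAss{2}{13}$ as a complete fan, i.e.\ that all its $2$-triangulations are bases and that both sign conditions of Theorem~\ref{thm:fan} hold.

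The part that needs genuine care, beyond running the code, will be the ``only if'' direction of~(1): one must identify which finite subsystem of the inequalities of Lemma~\ref{lemma:inequalities} is inconsistent for the standard positions at $n=9$ and exhibit a short, human-checkable certificate for it, rather than merely reporting that a solver declares the system infeasible. The remaining difficulty is the computational size of part~(3) --- the number of $2$-triangulations, flips and elementary $5$-cycles of $\OvAss{2}{13}$ --- which I would handle by building the flip graph incrementally and storing only the data needed for the sign checks.
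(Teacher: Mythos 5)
Your plan is essentially the paper's proof: reduce to the single cases $n=8,9,10,13$ via the monotonicity of Remark~\ref{rem:monotone} and then verify each by finite computation, exhibiting the feasible lifting vectors of Table~\ref{table:2n} for $n=8$ and $n=10$ and reporting infeasibility of the system of Lemma~\ref{lemma:inequalities} at $n=9$ (the paper does not supply the human-checkable Farkas certificate you aspire to; it simply records that the solver declares the system infeasible). The one thing to drop is your detour in part~(3) through realizing a $4$-associahedron and passing to links --- that route is actually blocked by Theorem~\ref{thm:2k+6} --- but since you immediately fall back to the direct verification that $\OvAss{2}{13}$ is realized as a complete fan, which is exactly what the paper does, nothing is lost.
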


\begin{proof}
	For part (1), by Lemma~\ref{lemma:standard} we only need to check that $n=8$ works and $n=9$ does not. For $n=8$ Table~\ref{table:2n} 
	shows values of $(f_{ij})_{i,j}$ that prove the fan polytopal. For $n=9$ the computer said that the system is not feasible, which finishes the proof of part (1)).
	
	For part (2), near-lexicographic positions gave the feasible solution displayed in Table~\ref{table:2n}.
	
	For part (3), the computer checked the conditions for a complete fan for $n=13$. Only the last one would really be needed; this last one took about 2 days of computing in a standard laptop.
\end{proof}

\begin{table}
	\begin{tabular}{c|c}
		$i,j$ & $f_{ij}$ \\ \hline
		3,6 & 16 \\
		3,7 & 35 \\
		3,8 & 59 \\
		4,7 & 11 \\
		4,8 & 36 \\
		5,8 & 37 \\
	\end{tabular}
	\qquad\quad
	\begin{tabular}{c|c}
		$i,j$ & $f_{ij}$ \\ \hline
		3,6 & 2097152 \\
		3,7 & 2116197 \\
		3,8 & 2116816 \\
		3,9 & 2116875 \\
		3,10 & 2116899 \\
	\end{tabular}\quad
	\begin{tabular}{c|c}
		$i,j$ & $f_{ij}$ \\ \hline
		4,7 & 1094032 \\
		4,8 & 1133231 \\
		4,9 & 1136343 \\
		4,10 & 1137410 \\
		5,8 & 5949943 \\ 
	\end{tabular}\quad
\begin{tabular}{c|c}
	$i,j$ & $f_{ij}$ \\ \hline
	5,9 & 6427739 \\
	5,10 & 6575138 \\
	6,9 & 166833470 \\
	6,10 & 237316440 \\
	7,10 & 120253293274
\end{tabular}
	\medskip
	\label{res9}
	\caption{Height vectors $(f_{ij})_{i,j}$ realizing $\OvAss{2}{8}$ (left) as a polytopal fan with rays in $P_2(1,2,3,4,5,1,2,3,4,5)$ (standard positions), and $\OvAss{2}{10}$ (right) with rays in $P_2(\bt)$, with $t_i=2^{(i-1)^2}$.}
	\label{table:2n}
\end{table}

For $k=3$ and $n=11$, the restrictions given by Theorem \ref{thm:star} are complicated to solve, but we have found a position that realizes the fan (but not the polytope): $(0,1,31,32,42,67,100)$ at both sides.

\end{document}